\tikzstyle xyax=[thin]
\tikzstyle mlin=[thick]
\tikzstyle slin=[]
\newtheorem{theorem}{Theorem}[section]
\newtheorem{proposition}[theorem]{Proposition}
\newtheorem{lemma}[theorem]{Lemma}
\newtheorem{corollary}[theorem]{Corollary}
\newtheorem{definition}[theorem]{Definition}
\newtheorem{example}[theorem]{Example}
\newtheorem*{theorem*}{Lemma}
\theoremstyle{definition}
\newtheorem{remark}[theorem]{Remark}
\numberwithin{equation}{section}
\newcommand{\res}{\mathop{\hbox{\vrule height 7pt width .5pt depth 0pt
\vrule height .5pt width 6pt depth 0pt}}\nolimits}
\definecolor{purple}{rgb}{0.8,0.01,0.7}
\newcommand{\N}{\mathbb N} 
\newcommand{\R}{\mathbb R} 
\DeclareMathOperator{\Div}{div}
\newcommand{\e}{\varepsilon}
\newcommand{\ol}{\overline}
\newcommand\p{\varphi}
\newcommand\iO{\int_\O}
\newcommand\Oel{{\O_{el}}}
\newcommand\Opl{{\O_{pl}}}
\newcommand{\LL}{{\mathcal L}}
\newcommand{\HH}{{\mathcal H}}
\newcommand{\M}{{\mathcal M}}
\newcommand{\C}{{\mathcal C}}
\newcommand{\CC}{{\mathscr C}}
\newcommand\h{\HH^1}
\let\O=\Omega
\newcommand\pa{\partial}
\author[J.-F. Babadjian]{Jean-Fran\c cois Babadjian}
\author[G. A. Francfort]{Gilles A. Francfort}
\address[J.-F. Babadjian]{Universit\'e Paris-Saclay, CNRS,  Laboratoire de math\'ematiques d'Orsay, 91405, Orsay, France.}
\email{jean-francois.babadjian@universite-paris-saclay.fr}
\address[G.A. Francfort]{Flatiron Institute, 162 Fifth Avenue, NY-NY10010, USA}
\email{gfrancfort@flatironinstitute.org}
\subjclass[2020]{35L40, 35Q74, 49J45, 49Q20, 74C05, 74G65}
\keywords{Variational problems with linear growth, Functions of bounded variation, Uniqueness, Lagrangian flow, Plasticity}
\date{\today}
\title[Uniqueness, regularity and characteristic flow]{Uniqueness, regularity and characteristic flow for a non strictly convex singular variational problem}
\begin{document}

\begin{abstract}
This work addresses the question of uniqueness and regularity of the minimizers of a convex but not strictly convex integral functional with linear growth in a two-dimensional setting. The integrand -- whose precise form derives directly from the theory of perfect plasticity -- behaves quadratically close to the origin and grows linearly once a specific threshold is reached. Thus, in contrast with the only existing literature on uniqueness for functionals with linear growth, that is that which pertains to the generalized least gradient,  the integrand is not  a norm. We make use of hyperbolic {conservation} laws hidden in the structure of the problem to tackle  uniqueness. Our argument strongly relies on the regularity of a vector field -- the Cauchy stress in the terminology of perfect plasticity -- which allows us to define characteristic lines, and then to employ  the  method of characteristics. Using the detailed structure of the characteristic landscape evidenced in  our preliminary study  \cite{BF}, we show that this vector field is actually continuous, save for  possibly two  points. The  different behaviors of the energy density at zero and at infinity imply an inequality constraint on the Cauchy stress. Under a {barrier type} convexity assumption on the set where the inequality constraint is saturated, we show that  uniqueness  holds for pure Dirichlet boundary data devoid of any regularity properties, a stronger result than that of uniqueness for a given trace on the whole boundary since our minimizers can fail to attain the boundary data. We also show a partial regularity result for the minimizer.
\end{abstract}

\maketitle

\tableofcontents

\section{Introduction}

This work should be seen as a sequel to, and a culmination of our previous work \cite{BF}, although the viewpoint and the presentation will be  different. We point out from the very beginning  that the setting is two-dimensional and that  the methods we use will not generalize to higher dimensions.

Given a bounded connected open subset $\O$ of $\R^2$ with Lipschitz boundary, we partition $\partial\O$ into the disjoint union of $\partial\O=\partial_D \O\cup \partial_N\O$ where $\partial_D \O$ is open in the relative topology of $\partial\O$ and $\partial_N\O=\partial\O \setminus \partial_D\O$. For a Dirichlet boundary data $w \in L^1(\partial_D \O)$ and a Neumann boundary data $g \in L^\infty(\partial_N\O)$, we consider the following problem of the calculus of variations
\begin{equation}\label{eq.op}
\inf\left\{\iO W(\nabla u)\, dx-\int_{\pa_N \O} gu\, d\h: \; u \in W^{1,1}(\O),\; u=w \mbox{ on }\pa_D \O\right\},
\end{equation}
where the potential $W:\R^2 \to \R$ is explicitly given by
\begin{equation}\label{eq:W}
W(\xi)=\begin{cases}\frac12|\xi|^2 & \text{ if } |\xi|\le 1,\\
|\xi|-\frac12 & \text{ if } |\xi|>1.
\end{cases}
\end{equation}
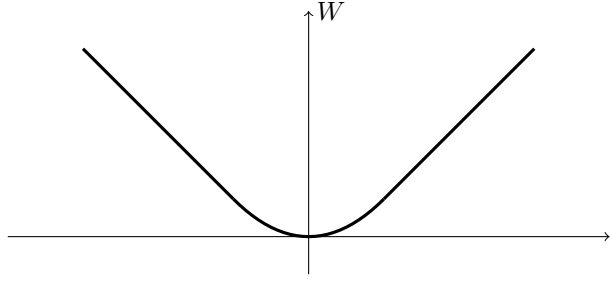
\begin{figure}[htbp]
\scalebox{1}{
\begin{tikzpicture}
\draw[->] (-4, 0) -- (4, 0);
\draw[->] (0, -0.5) -- (0, 3) ;
\draw [very thick, domain=1:3] plot (\x,{\x-1/2});
\draw [very thick, domain=-3:-1] plot (\x,{-\x-1/2});
\draw [very thick, domain=-1:1] plot (\x,{0.5*\x*\x});
\draw  (0.3,3) node{$W$};
\end{tikzpicture}}
\caption{The graph of $W$}
\label{fig:W}
\end{figure}
Because of the linear growth of $W$ at infinity, it is natural to seek $u$ in $W^{1,1}(\O)$. However, it is by now well-known (see {\it e.g.} \cite{FM,BDM}) that this problem needs to be relaxed in the larger space $BV(\O)$ of functions with bounded variation, and that the relaxed problem reads as
\begin{equation}\label{eq.rp}
\min\left\{\iO W(\nabla u)\, dx+ |D^su|(\O)+\int_{\pa_D\O}|w-u|\, d\h-\int_{\pa_N \O} gu\, d\h: \; u\in BV(\O) \right\},
\end{equation}
where, for $u\in BV(\O)$, $\nabla u$ denotes the Lebesgue-absolutely continuous part of $Du$, $D^su$ the Lebesgue-singular part of $Du$, and $|D^su|$ the variation measure of $D^su$.

As a corollary, the relaxed minimization problem  \eqref{eq.rp} has a solution in $BV(\O)$, at least provided that minimizing sequences  of \eqref{eq.op} remain bounded in $W^{1,1}(\O)$. This will be the case for example if the Neumann condition derives from a potential, {\it e.g.} there exists $\tau\in \C^0(\ol \O;\R^2)$ such that
\begin{equation}\label{eq.sl}
\begin{cases}{\rm div\,}\tau=0 & \text{ in }\O,\\
\tau\cdot \nu=g & \mbox{ on }\pa_N\O,\\
\|\tau\|_{L^\infty(\O)}\le\alpha<1&
\end{cases}
\end{equation}
(with $\nu$ the exterior unit normal to $\O$) as could be seen by observing (see {\it e.g.} \cite[Remark 2.10]{FG}) that the boundary term in \eqref{eq.op} can be replaced by
$$\int_{\pa_N\O} gu\, d\h=\int_\O\tau\cdot\nabla u\, dx.$$

\medskip

Our  goal, in both this and the previous paper \cite{BF}, is to adjudicate the uniqueness and regularity of such minimizers. 

As far as uniqueness is concerned our result is that uniqueness  holds true in the pure Dirichlet case, provided that a {barrier type} convexity condition on a well-defined set is satisfied: in essence that  $\{x\in\O: |\nabla u(x)|\ge 1\}$ be convex (see \eqref{eq.def.opl} below).  When departing from the pure Dirichlet case, uniqueness is false as explained further down in this introduction. See Theorems \ref{thm:BF1} and \ref{thm.uniq} for the precise statements of uniqueness and the example evidenced in our previous work (\cite[Subsection 1.2]{BF}) for the failure of uniqueness for mixed boundary conditions. Of course our uniqueness result does not completely solve the uniqueness question because of the convexity condition, even if we do not know of any situation in which it is violated, nor do we know if that set could be made of more than one convex component. Moreover, we contend that such a condition is inherent to the problem at hand, at least if we recall  uniqueness results on related problems available in the literature, as will be explained below. However, this result is also quite different from those available in the literature, this on two main grounds.

First, the energy $W$ is not a norm (a sub-additive, one-homogeneous function). Thus, we cannot appeal  to various uniqueness results (see \cite{JMN})  derived for special classes of Lipschitz domains. Those should be viewed in the footstep of a well-known result  (see \cite[Theorem 4.1]{SWZ}) that establishes existence and uniqueness of continuous minimizers with bounded total variation in the least gradient setting, provided that a continuous trace  is given.

Then, the result provides uniqueness for the Dirichlet problem, and not for the trace problem. In other words, we do not assume that the trace of the test functions ($BV$-functions) coincides with the Dirichlet datum. To our knowledge, the only result in that direction is \cite[Theorem 1.2]{Ledos} which asserts that there is at most one (but maybe no) solution for the Dirichlet problem with continuous data  in the class of uniformly continuous functions.  Here, no regularity restriction of any kind is imposed on the Dirichlet datum or on the solution.

As regards our convexity assumption, note that, in the least gradient setting (or the related setting of a norm), a so-called barrier assumption -- essentially a generalization of strict convexity -- is  imposed on the topology of the domain, so as to attain the relevant uniqueness results (see {\it e.g.} \cite{SWZ, JMN}). The most general assumption known to us is \cite[Theorem 1.1]{Gorny}  which establishes a  partial uniqueness under simple convexity assumptions.  Here, our energy $W$ looks like a least gradient energy provided that the norm of the gradient exceeds $1$. It is thus hardly surprising that we should impose a barrier-like condition on the set of points where the norm of the gradient exceeds $1$, which is not the whole domain but the set defined in \eqref{eq.def.opl} below.
We emphasize that that set, although not {\it a priori} known, is always uniquely determined.

As far as regularity is concerned, the example in our previous work (\cite[Subsection 1.2]{BF}) also demonstrates the possible lack of regularity of the minimizers for mixed boundary conditions. In the pure Dirichlet case and under the already mentioned convexity condition, we establish a partial regularity result for the minimizer $u$ (see Theorem \ref{thm:cont-u}). We also establish the continuity of the stress $\sigma$ in $\O$, except possibly at two points (see Theorem \ref{thm:cont_sigma}). The definition of the stress $\sigma$ is provided below starting with \eqref{eq.rpm}.

\medskip

The premise of our argument is to rewrite \eqref{eq.rp} in a more palatable manner. This is done by remarking that $W$ is actually the infimal convolution of the convex functions $\xi \mapsto \frac12|\xi|^2$ and $\xi \mapsto |\xi|$, {\it i.e.},
\begin{equation}\label{eq:infconv}
W(\xi)=\min_{p\in\R^2}\left\{\frac12|\xi-p|^2+|p|\right\}.
\end{equation}
Consequently, by measurable selection type arguments, \eqref{eq.rp} reads as
 \begin{equation}\label{eq.rpm}
 \min_{ (u,\sigma,p) \in \mathcal A_w}\left\{\iO \frac12 |\sigma|^2\, dx+ |p|(\O\cup\pa_D\O)-\int_{\pa_N\O}gu\, d\h\right\},
 \end{equation}
 where, referring to Section \ref{sec:not} regarding notation and functional spaces,
 \begin{multline*}
\mathcal A_w:=\{(u,\sigma,p) \in BV(\O) \times L^2(\O;\R^2) \times \M(\O\cup\partial_D \O;\R^2): \\
Du=\sigma+p \text{ in } \O, \ p=(w-u)\nu \HH^1\text{ on } \partial_D \O\}.
\end{multline*}

\begin{remark}
Formulation \eqref{eq.rpm} can be interpreted  in the setting of perfect Hencky plasticity, our original underlying motivation for this investigation. In that framework, $u$ is a scalar (anti-plane) displacement, $Du$ is the total strain which is additively decomposed as the sum of an elastic strain denoted by $\sigma$ -- here undistinguishable from  the stress -- and a plastic strain $p$. The stress $\sigma$ is constrained to remain in the closed unit ball. As long as $\sigma$ leaves in the interior of the ball no plastic strain appears ($p=0$) and the material behaves elastically (the displacement $u$ is formally harmonic in the elasticity set $\{|\sigma|<1\}$). Plastic strain lives  in the set $\{|\sigma|=1\}$ where the constraint is saturated. Note that, as usual in variational problems with linear growth, the Dirichlet condition might fail to be satisfied, in which case the boundary jump is energetically penalized. In the context of plasticity, it means that the plastic strain can also charge the boundary at those boundary points where the displacement $u$ does not match the prescribed Dirichlet data $w$. The Neumann data $g$ can be interpreted as a surface force and  \eqref{eq.sl} corresponds to the so called safe-load condition of classical plasticity.

As an aside, Hencky plasticity is itself a static version of  Von Mises plasticity, the canonical model of elasto-plasticity in solid mechanics. Elasto-plasticity, while a somewhat neglected field in the mathematical community, is  the solid equivalent of Navier-Stokes, in that it -- and not a viscosity driven model like visco-elasticity -- is the template for dissipative behavior in  crystalline solids. In such a light, our result should be seen as a first step towards uniqueness of elasto-plastic evolutions, a result   of paramount importance in solid mechanics and materials science.
\hfill\P\end{remark}
 
A proof of the existence of a minimizer for \eqref{eq.rpm} is trivial by the direct method in the calculus of variations. This  also provides an alternative proof of the relaxation of the functional
\begin{multline}\nonumber
 \min\left\{\iO \frac12|\sigma|^2\, dx+ \int_\O |p|\, dx-\int_{\pa_N\O}gu\, d\h: \; \nabla u=\sigma+p \text{ in }\O,\, u=w \mbox{ on }\pa_D\O,\right.\\
 \left.(u,\sigma,p)\in W^{1,1}(\O)\times L^2(\O;\R^2)\times L^1(\O;\R^2)\right\}
 \end{multline}
first obtained in \cite{M} in a more general vectorial setting. 

\medskip

As far as uniqueness is concerned, formulation \eqref{eq.rpm} splits the distributional gradient of $u$ into a part $\sigma$ which is unique and a part $p$ where non-uniqueness may occur. The uniqueness of $\sigma$ is straightforward by strict convexity of $\sigma\mapsto\frac12 |\sigma|^2$ and sub-additivity of $p\mapsto |p|$.
 
We already know that uniqueness cannot hold in this general two-dimensional setting. In \cite[Subsection 1.2]{BF} we produced an example with drastic non-uniqueness for a trapezoidal domain with both Dirichlet and Neumann (smooth) boundary  conditions. Further the example demonstrates that regularity beyond $BV$ is false for the minimizers since their gradient can, for example, have  non-zero Cantor parts. This should be contrasted with the case of  total variation type minimization problems or, more generally, with one-homogeneous, strictly convex variational problems with linear growth (see {\it e.g.} \cite{Miranda,JMN,BS,Mercier}) for which some statement of regularity or uniqueness can be vindicated. 

Here, the necessary and sufficient Euler-Lagrange conditions read as
 $$\begin{cases}
{\rm div}\sigma=0 & \text{ in }\O,\\
\sigma\cdot\nu=g  & \text{ on }\pa_N\O,\\
|\sigma|\leq 1 & \text{  in }\O,\\
Du=\sigma+p & \text{ in }\O, \\
p=(w-u)\nu \HH^1& \text{ on } \partial_D\O,\\
|p|=\sigma\cdot p  & \text{ in }\O\cup \partial_D\O,
\end{cases}
$$
 where, in the last equality, the duality pairing $\sigma\cdot p$ has to be interpreted in a suitable measure theoretic way (see \eqref{def_Sigma_p} below). Formally, the previous equations imply that $p=\sigma|p|$ and thus, that $Du=\sigma \mu$ where $\mu=|p|+\LL^2$. A natural conservation law for $\mu$ arises by taking the curl of the previous equality. It leads to the following continuity equation 
 $${\rm div}(\sigma^\perp\mu)=0$$
 satisfied by $\mu$. It suggests that hyperbolic methods should give information on the behavior of the measure $\mu$ (hence on $p$ and $u$), at least along characteristic lines which are solutions to the ODE
\begin{equation}\label{eq:char-curve}
\dot \gamma(t)=\sigma^\perp(\gamma(t)), \quad t \in \R.
\end{equation}
One of the main issues is to lend a meaning to the concept of solution to the previous ODE since the vector field $\sigma$ is not regular. One can establish $H^1_{\rm loc}$-regularity for $\sigma$, a regularity that takes us beyond the classical Cauchy-Lipschitz or Cauchy-Peano theories. In this Sobolev setting, one could appeal to the more sophisticated Lagrangian flow theory originally initiated in \cite{DPL} and later developed in \cite{Amb,CD}. Unfortunately, this theory does not apply either because of a lack of control of the so-called compressibility constant which roughly corresponds to the quantity $\|{\rm curl}\sigma\|_\infty=\|{\rm div} \sigma^\perp\|_\infty$. 

In our previous paper \cite{BF}, we focussed on the set $\{|\sigma|=1\}$ assuming that it has non-empty interior. In that case, an additional conservation law for $\sigma$, typical in micromagnetism, arises
$${\rm div}\sigma=0, \quad |\sigma|=1.$$
 The use of entropy methods as in \cite{DKMO,JOP} allows one to exhibit characteristic line segments in $\{|\sigma|=1\}$ along which both $\sigma$ and $u$ must remain constant at least locally. We also give a very precise structure of that set, proving that any convex open subset $\O'$ of $\{|\sigma|=1\}$ decomposes into countably many pairwise disjoint open fans  $\mathbf F_{\hat z}$ -- the intersection of an open cone with $\O'$; see \eqref{eq:fan} below for a precise definition -- with an apex $\hat z$ on $\pa \O$ on which $\sigma$ behaves like a vortex centered at $\hat z$, {\it i.e.} 
$$\sigma(x)=\pm\frac{(x-\hat z)^\perp}{|x-\hat z|},$$
together with pairwise disjoint closed convex sets $\mathbf C$ on which $\sigma$ is continuous except on exceptional line segments $S$ that must lie on the boundary of  $\mathbf C$. Assuming the convexity of the set $\{|\sigma|=1\}$ we were able to provide a complete description of the distribution of those characteristic lines. See \cite[Theorem 1.3]{BF} or Theorem \ref{thm:structOp}  below for a summary of these results.
 
In this paper, we  adopt a more global viewpoint. We denote by $\Opl$ the interior of the set where $|\sigma|=1$ which we call {\em the (possibly) plastic region}, and by $\Oel=\O \setminus \overline \O_{pl} $ the {\em elastic region} (see \eqref{eq:O1} and \eqref{eq:OplOel} for precise definitions). We additionally (and admittedly restrictively) assume that 
\begin{equation}
\label{eq.def.opl}
\{|\sigma|=1\}\mbox{ is convex,}
\end{equation}
 so that $\Opl$ is convex as well. Under those assumptions $\Opl$, if not empty, is precisely the set to which the results of \cite{BF} apply.

We first exhibit in Section \ref{sec.ex} two examples for which $\Opl$ is either empty, or else a convex set strictly contained in $\O$.  In both cases uniqueness holds. Then, in Subsection \ref{subsec.empty}, we
quickly dispatch the case for which $\{|\sigma|=1\}$ is a convex set with empty interior (that is a line segment).
Subsection  \ref{sec.nei} recalls and refines the geometry of the characteristics in $\Opl$ which was analyzed in details in our previous paper. In Proposition \ref{prop:level-set} we establish that those characteristics are precisely the level sets of the minimizers in $\Opl$. 

Section \ref{sec.con} is entirely devoted to the already mentioned continuity results for $\sigma$ and $u$ (Theorems \ref{thm:cont_sigma} and \ref{thm:cont-u}). The continuity of $\sigma$ might be deemed predictable because, as will be seen in Lemma \ref{lem:whatinOel} and Theorem \ref{thm:struct-sigma-u}  below, $\sigma$ is continuous on both sets $\Oel$ and $\Opl$. Yet it is not so obvious that $\sigma$, and not only $\sigma\cdot \nu$, should be continuous across their shared boundary with normal $\nu$. The partial continuity of $u$ is, for its part, completely new, especially in view of the lack of regularity of the Dirichlet boundary condition $w$ along $\pa\O$.
 
Section \ref{sec.pdbc} is devoted to the proof of  the uniqueness of the minimizer of  \eqref{eq.rp} in the case of purely Dirichlet boundary conditions, that is when $\pa_D\O=\pa\O$, this under the assumption of convexity of $\Opl$ and the additional restriction that $\O$ be  $\C^{1,1}$. This is the object of Theorem \ref{thm.uniq}.  It is clear that the assumption on $\O$ is of a technical nature. Whether   the convexity assumption on $\Opl$ is equally so is unclear to us at present, but, as already alluded to, it conceptually fits  the usual barrier assumptions for uniqueness in the least gradient problem.

The final Section \ref{sec.cf} is devoted to a description of the geometry of the characteristics curves which,
thanks to the continuity of the stress $\sigma$, are well defined -- while maybe not unique -- on the whole of $\O$, this by the Cauchy-Peano theorem. We specifically investigate how such characteristics end up crossing, or not, the boundary $\Sigma$ between $\Oel$ and $\Opl$.  That last section is to be viewed as an effort toward a completion of the classification of the characteristic curves initiated in \cite{BF}.

\medskip

Summing up, the originality of this work is in our opinion twofold. On the one hand,  new regularity results  for minimizers of \eqref{eq.rpm} -- or, equivalently, for solutions of the scalar Hencky plasticity problem \eqref{eq.rp} -- are derived; those go beyond the classical $H^1_{\rm loc}$-regularity of $\sigma$. In Theorem \ref{thm:cont_sigma}, we establish the continuity of $\sigma$, except maybe at two single points in the interior of $\O$; those correspond to  non-differentiability points of the interface $\Sigma$ between the elastic and plastic parts of the domain which are not crossed by characteristic line segments. For its part, Theorem \ref{thm:cont-u}  establishes the continuity of the minimisers $u$  at all points of $\O$ swept by characteristic lines intersecting $\Sigma$. Both results  heavily rely on  the hyperbolic structure undergirding the problem. On the other hand, we prove the first generic uniqueness results in the case of  pure Dirichlet boundary data (Theorems \ref{thm:BF1} and \ref{thm.uniq});  again, those go beyond related results in the least gradient setting. 

Throughout, spatial hyperbolicity is key, resulting in  a surprising interplay between variational and hyperbolic structures.

\section{Notation and preliminaries}\label{sec:not}

The Lebesgue measure in $\R^n$ is denoted by $\mathcal L^n$  and the $s$-dimensional Hausdorff measure by $\mathcal H^s$.

From here onward the space dimension is set to $2$. If $a$ and $b \in \R^2$,  $a \cdot b$ denotes the Euclidean scalar product, and  $|a|:=\sqrt{a \cdot a}$. Further, we denote by $a^\perp=(-a_2,a_1)$ the rotation of $a=(a_1,a_2)$ by  $\pi/2$. The open (resp. closed) ball of center $x$ and radius $\rho$ is denoted by $B_\rho(x)$ (resp. $\overline B_\rho(x)$). 

\medskip

Departing from convention, we find it  most convenient to denote throughout by $(a,b)$ one of the line segments  $[a,b[$, $]a,b]$ or $[a,b]$.

\medskip

In all that follows, $\Omega \subset \R^2$ is {(at the least)} a bounded and Lipschitz open set, $\partial_D\O \subset \partial\O$ is open in the relative topology of $\partial\O$, and $\partial_N\O:=\partial\O \setminus \partial_D \O$. We use standard notation for Lebesgue and Sobolev spaces. For $X$ a locally compact set in $\R^2$, we denote by $\mathcal{M}(X;\R^2)$ (resp. $\M(X)$)  the space of bounded Radon measures in $X$ with values in $\R^2$ (resp. $\R$), endowed with the  norm $|\mu|(X)$, where $|\mu|\in \mathcal{M}(X)$ is the variation of the measure $\mu$. The space $BV(\Omega)$ of functions of bounded variation in $\Omega$ is made of all functions $u \in L^1(\Omega)$ such that their distributional gradient $Du\in \mathcal M(\Omega;\R^2)$. Sobolev embedding shows that $BV(\O)\subset L^2(\O)$. 

\medskip

We recall that, if $\O$ is bounded with Lipschitz boundary and $\sigma \in L^2(\O;\R^2)$ with $\Div \sigma \in L^2(\O)$, its normal trace, denoted by $\sigma\cdot\nu$, is well defined as an element of $H^{-1/2}(\partial \Omega)$. If further $\sigma \in L^\infty(\Omega;\R^2)$, then $\sigma\cdot \nu \in L^\infty(\partial \Omega)$ with $\|\sigma\cdot\nu\|_{L^\infty(\partial \O)}\leq \|\sigma\|_{L^\infty(\O;\R^2)}$ (see {\it e.g.}~\cite[Theorem 1.2]{A}). 

\medskip

According to \cite[Definition 1.4]{A} and \cite[Section 6]{FG}, we define a generalized notion of  duality pairing between $\sigma$ and a measure $p$ as follows:
\begin{definition}
\label{def_Sigma_p}
Let $\O$ be a bounded open set with Lipschitz boundary, $\pa_D\O$ be a relatively open subset of $\pa\O$ and $\partial_N\O=\partial\O \setminus \partial_D\O$. For every $\sigma \in L^\infty(\Omega;\R^2)$ with $\Div\sigma \in L^2(\O)$, $(u,e,p) \in BV(\Omega) \times  L^2(\Omega;\R^2) \times  \mathcal{M}(\Omega \cup \partial_D\O;\R^2)$ and $w \in W^{1,1}(\O)$ such that $D u = e+p$ in $\O$ and $p=(w-u)\nu \HH^{1}$ on $\partial_D \O$, we define the distribution $\left[\sigma\cdot p \right]\in \mathcal D'(\R^2)$ by
\begin{multline}\label{eq:duality}\langle\left[\sigma\cdot p \right], \p\rangle =  \int_\O \p\sigma\cdot (\nabla w-e)\; dx  
+ \int_\O (w-u)\sigma \cdot\nabla\p\; dx\\
+\int_{\Omega} (w-u) (\Div \sigma) \p \, dx +\int_{\partial_N\O}(\sigma\cdot\nu) (u-w)\p\, d\HH^1 \quad \text{ for all } \p\in \mathcal{C}^{\infty}_c(\R^2).
\end{multline}
\end{definition}

By appropriate smooth approximation of $\sigma$ through local translations and convolution (see {\it e.g.} \cite[Section 6]{FG} with the additional simplification that $\sigma\cdot\nu \in L^\infty(\partial\O)$), it could be  shown  that $[\sigma\cdot p]$ is actually a bounded Radon measure  in $\R^2$ satisfying
\begin{equation}\label{eq:convex-ineq}
|[\sigma\cdot p]| \leq \|\sigma\|_{L^\infty(\O;\R^2)} |p| \quad \text{ in } \mathcal M(\R^2)
\end{equation}
and {with total mass obtained} by taking $\p\equiv 1$ in \eqref{eq:duality} (see \cite{FG}). Moreover, if $\sigma \in \mathcal C^0(\O;\R^2)$,
$$\langle\left[\sigma\cdot p \right], \varphi\rangle =  \int_\O \varphi \sigma \cdot\, dp=\int_\O \varphi \sigma \cdot \frac{dp}{d|p|} \, d|p|\quad \text{ for all }\varphi\in \C_c(\O),$$
where $\frac{dp}{d|p|}$ stands for the Radon-Nikod\'ym derivative of $p$ with respect to its variation $|p|$. For all of the above, see \cite[Section 6]{FG} in the vectorial case.

\medskip

Finally we establish the following result which will be used several times throughout. It is a direct application of  {\it e.g.} \cite[Theorem 1]{JK}.

\begin{lemma}\label{lem.u=cst}
Let $U \subset \R^2$ be a {bounded open domain}, and  $u$ be a 1-Lipschitz harmonic function in $U$ such that
$$\left|\partial_\nu u\right|=1 \quad \HH^1\text{-a.e. on }\Gamma,$$
where $\Gamma$ is a relatively open and connected {Lipschitz} subset of $\pa U$. Then $u$ remains constant on $\Gamma$.
\end{lemma}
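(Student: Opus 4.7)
The idea is very simple geometrically: on $\Gamma$, the pointwise identity $|\nabla u|^2=(\partial_\nu u)^2+(\partial_\tau u)^2$ combined with $|\nabla u|\le 1$ (from the $1$-Lipschitz assumption) and $|\partial_\nu u|=1$ forces the tangential derivative $\partial_\tau u$ to vanish $\HH^1$-almost everywhere on $\Gamma$. The main difficulty is technical: this decomposition takes place only on the boundary, and a priori $\nabla u$ is only defined a.e.\ inside $U$. Making sense of all of these boundary quantities, and checking that $|\nabla u|\le 1$ survives the passage to $\Gamma$, is where the cited Jerison-Kenig type result intervenes.

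Here is the plan. First, observe that because $u$ is $1$-Lipschitz in $U$ it extends to a $1$-Lipschitz function on $\ol U$, so its trace $u|_\Gamma$ is well-defined and $1$-Lipschitz for the intrinsic distance on the Lipschitz curve $\Gamma$. In particular $u|_\Gamma$ is a.e.\ differentiable along $\Gamma$ and admits a tangential derivative $\partial_\tau u \in L^\infty(\Gamma,\HH^1)$ with $|\partial_\tau u|\le 1$. Next, harmonicity of $u$ together with $\nabla u \in L^\infty(U;\R^2)$ and the Lipschitz character of $\Gamma$ allow an appeal to the Jerison-Kenig theorem \cite{JK}: the gradient $\nabla u$ admits non-tangential limits $\HH^1$-a.e.\ on $\Gamma$, and these limits agree a.e.\ with $(\partial_\tau u)\tau + (\partial_\nu u)\nu$ for a well-defined normal derivative $\partial_\nu u$.

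Once both components exist as $L^\infty$ boundary traces, the orthogonal decomposition
\begin{equation*}
|\nabla u|^2 \;=\; (\partial_\nu u)^2 + (\partial_\tau u)^2 \qquad \HH^1\text{-a.e.\ on }\Gamma
\end{equation*}
is immediate. Because $|\nabla u|\le 1$ a.e.\ in $U$ and the non-tangential limits are bounded by $\|\nabla u\|_{L^\infty(U)}$, we also have $|\nabla u|\le 1$ $\HH^1$-a.e.\ on $\Gamma$. Substituting the hypothesis $|\partial_\nu u|=1$ forces $\partial_\tau u = 0$ $\HH^1$-a.e.\ on $\Gamma$.

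Finally, since $u|_\Gamma$ is a Lipschitz function on $\Gamma$ whose tangential derivative vanishes $\HH^1$-a.e., and $\Gamma$ is connected (and Lipschitz, hence arcwise connected with rectifiable arcs), integrating $\partial_\tau u$ along any such arc yields that $u|_\Gamma$ is constant. The step I expect to be the most delicate is invoking \cite{JK} to produce the a.e.\ non-tangential boundary values of $\nabla u$ on a merely Lipschitz piece $\Gamma$ of $\partial U$, and identifying these values with the actual tangential and normal derivatives of the trace; once that ingredient is in place, the remaining pointwise argument and the one-dimensional integration on $\Gamma$ are routine.
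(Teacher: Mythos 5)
Your proof is correct and follows essentially the same route as the paper: both invoke the Jerison--Kenig theorem to obtain $\HH^1$-a.e.\ boundary values of $\nabla u$ on the Lipschitz piece $\Gamma$, and then use the decomposition $|\nabla u|^2=(\partial_\nu u)^2+(\partial_\tau u)^2$ together with $|\nabla u|\le 1$ and $|\partial_\nu u|=1$ to force $\partial_\tau u=0$ $\HH^1$-a.e.\ on $\Gamma$. The only (immaterial) difference is the concluding step: you integrate the vanishing tangential derivative of the Lipschitz trace along arcs of the connected set $\Gamma$, whereas the paper applies the generalized area formula to get $\HH^1(u(\Gamma))=0$ and concludes by continuity and connectedness; both are routine once $\partial_\tau u=0$ is established.
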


\begin{proof} 
Because $|\nabla u|\le 1$ in $U$ and $|\partial_\nu u|= 1$ on $\Gamma$, \cite[Theorem 1]{JK}) implies that the tangential derivative $\partial_\tau u$, which exists $\h$-a.e. on {$\Gamma$} because {$\Gamma$} is a Lipschitz curve, is $0$ $\h$-a.e. on $\Gamma$.  Indeed, we deduce from that theorem that, as $\e \to 0$, 
$$\begin{cases}
\partial_\tau u\left(x-\e\nu(x)\right) \to \partial_\tau u(x)\\
\partial_\nu u\left(x-\e\nu(x)\right) \to \partial_\nu u(x)
\end{cases}
\qquad \text{ for }\h\mbox{-a.e.  }x \in \Gamma.$$
But, since $\Gamma$ is a Lipschitz curve, the generalized area formula  (see {\it e.g.} \cite[Theorem 2.91]{AFP}) implies that $\h(u({\Gamma}))=0$. Since $u$ is continuous on $\overline\O$, hence on $\Gamma$, this cannot happen unless $u$ is constant on $\Gamma$. 
\end{proof}

\section{Statement of the problem}\label{sec.ex}

\subsection{The minimization problem and the Euler-Lagrange equations}

 Assume that $w \in W^{1,1}(\R^2)$ -- so that its trace on $\pa_D\O$ belongs to $L^1(\pa_D\O)$ -- and $g \in L^\infty(\pa_N\O)$ satisfying \eqref{eq.sl}. Recalling the introduction, we consider the following minimization problem
\begin{equation}\label{eq:minim} 
\min \Big\{ \frac12 \int_\O |\sigma|^2\, dx + |p|(\O\cup \partial_D\O)-\int_{\pa_N\O}gu\, d\h : \; (u,\sigma,p) \in \mathcal A_w\Big\}
\end{equation}
{where 
\begin{multline*}
\mathcal A_w:=\{(u,\sigma,p) \in BV(\O) \times L^2(\O;\R^2) \times \M(\O\cup\partial_D \O;\R^2): \\
Du=\sigma+p \text{ in } \O, \ p=(w-u)\nu \HH^1\text{ on } \partial_D \O\}.
\end{multline*}
is an affine subspace of $BV(\O) \times L^2(\O;\R^2) \times \M(\O\cup\partial_D \O;\R^2)$, hence weak* closed in that space.}

The direct method in the calculus of variations ensures the existence of solutions $(u,\sigma,p) \in \mathcal A_w$ to the variational problem \eqref{eq:minim}. Moreover, it is shown in \cite[Section 3]{BF} that the Euler-Lagrange equations associated with the relaxed minimization problem \eqref{eq:minim} are the following necessary and sufficient first order conditions:
\begin{equation}
\label{eq:plast}
\begin{cases}
{\rm div}\sigma=0 & \text{ in }{H^{-1}(\O)},\\[1mm]
\sigma\cdot\nu=g  & \text{ a.e. on }\pa_N\O,\\[1mm]
|\sigma|\leq 1 & \text{ a.e. in }\O,\\[1mm]
Du=\sigma+p & \text{ in }\M(\O;\R^2), \\[1mm]
p=(w-u)\nu \HH^1& \text{ in } \M(\partial_D \O;\R^2),\\[1mm]
|p|=[\sigma\cdot p]  & \text{ in }\M(\O\cup \partial_D\O).
\end{cases}
\end{equation}
Further, $\sigma$ is unique and, by \cite[Theorem 4.1]{BF}, itself a revisiting of \cite[Theorem 4.1]{BeF}  and of \cite{S},
\begin{equation}\label{eq.reg-sigma}
\sigma\in H^1_{\rm loc}(\O;\R^2).
\end{equation}
Note that  \eqref{eq:plast}  and \eqref{eq.reg-sigma} have been derived in \cite[Section 3]{BF} and \cite[Theorem 4.1]{BF} respectively, under the assumptions that  $\pa_N\O=\emptyset$ and that   $w\in H^1(\R^2)$. However, an inspection of that proof shows that \eqref{eq:plast}  holds in our present setting while \cite[Theorem 4.1]{BF}, hence \eqref{eq.reg-sigma}, still holds true since this result is local.

\begin{remark}
Observe that $u$ is a solution of the original variational problem \eqref{eq.rp}  if and only if $(u,\sigma,p)\in \mathcal A_w$ is a solution of \eqref{eq:minim}. Indeed, using the infimum convolution formula \eqref{eq:infconv}, together with a measurable selection argument, for any $v\in BV(\O)$, the corresponding  $\tau$, $q$ such that $(v,\tau,q)\in \mathcal A_w$ are obtained through
$$\begin{array}{rcl}W(\nabla v(x)) &=& \frac12|\tau(x)|^2 + |\nabla v(x)-\tau(x)| \quad \LL^2\text{-a.e. in }\O,
\\[2mm]q&=&(\nabla v -\tau) \LL^2\res\O +D^s v+ (w-v)\nu \HH^1\res \partial_D\O.\end{array}$$
\vskip-.7cm\hfill\P
\end{remark}

\medskip

For linguistic convenience and because, as explained in \cite[Section 1]{BF}, \eqref{eq:minim} can be viewed as a problem of Hencky plasticity, we will refer to {$u$ as the {\em displacement}},  $\sigma$ as the {\em stress} and  $p$ as the {\em plastic strain} throughout the rest of this paper. Hereafter,  the last equation of \eqref{eq:plast} will be labeled the {\it flow rule} and it will appear at various places in different forms.

\begin{remark}\label{rmk:jump-flow-rule}
The following holds true:

(i) Exactly  as in \cite[Lemma 3.8]{FG},  if $\Gamma \subset \overline \O$ is locally the graph of a Lipschitz function, then $\sigma\cdot \nu \in L^\infty (\Gamma)$ and
$$[\sigma\cdot p]\res \Gamma=(\sigma\cdot\nu) (u^+-u^-) \HH^1 \res \Gamma,$$
where $u^+$ and $u^-$ are the {one-sided} traces of $u$ on $\Gamma$ oriented by the normal unit vector $\nu$, and $\sigma\cdot\nu$ is the normal trace of $\sigma$ on $\Gamma$. Thus, the flow rule {(the last equation of \eqref{eq:plast})} localized on $\Gamma$ reads
$$(\sigma\cdot\nu) (u^+-u^-)=|u^+-u^-| \quad \HH^1\text{-a.e. on }\Gamma.$$
Since by definition $u^+ \neq u^-$ on $J_u$, {the jump set of $u$}, we infer that $\sigma\cdot\nu=\pm 1$ $\HH^1$-a.e. on $\Gamma \cap J_u$. This applies also if $\HH^1(\Gamma\cap\partial_D\O)>0$, replacing $u^+$ by $w$ on that part of $\Gamma$.

\smallskip

(ii) Since $\sigma \in H^1_{\rm loc}(\O;\R^2)$,  it admits a precise representative defined $\rm{Cap }_p$-quasi everywhere for any $p<2$ hence $\HH^s$-almost everywhere in $\O$ for any $s>0$ (see {\it e.g.} \cite[Sections 4.7, 4.8]{EG}). In the sequel we will identify $\sigma$ with its precise representative which is thus defined outside a Borel set {$\mathcal N \subset \O$} of zero Hausdorff dimension.

\smallskip

(iii) Note also that, if $\Gamma \subset \O$ is as in (i), the normal trace $\sigma\cdot\nu$ coincides $\HH^1$-a.e. on $\Gamma$ with {the scalar product in $\R^2$} of (the trace of) $\sigma$ with the normal $\nu$ to $\Gamma$ since $\sigma \in H^1_{\rm loc}(\O;\R^2)$.

\smallskip

(iv) Arguing as in \cite{A2,DMDSM,FGM,BM}, it is possible to express the flow rule (the last equation of \eqref{eq:plast}) by means of the quasi-continuous representative of the stress, still denoted by $\sigma$, which is $|p|$-measurable. We get
\begin{equation}\label{eq:frptwise}
\sigma (x) \cdot \frac{dp}{d|p|}(x)=1 \quad \text{for $|p|$-a.e. $x\in \O$}
\end{equation}
or still $p=\sigma |p|$  in $\M(\O)$.
\hfill\P
\end{remark}

\subsection{The elastic and plastic regions}

Let us define the {\it saturation set} as
\begin{equation}\label{eq:O1}
\O_1:=\{x \in \O \setminus \mathcal N : \; |\sigma(x)|= 1\},
\end{equation}
where $\mathcal N \subset \O$ is the exceptional set of zero Hausdorff dimension of Remark \ref{rmk:jump-flow-rule}-(ii). Because $\sigma$ is unique, this set is well defined. In the remainder of this paper we  assume that $\O_1$  satisfies the following 

\bigskip

\noindent{\bf Hypothesis (H).} \qquad\qquad\qquad\emph{The saturation set $\O_1$ is convex.}

\bigskip
 
We next define 
\begin{equation}\label{eq:OplOel}
\Opl:={\rm int}(\O_1), \quad \Oel:=\O \setminus \overline\O_1.
\end{equation}

Under hypothesis {\bf (H)}, the set $\Opl$ is a (possibly empty) convex open set. It is  henceforth  referred to as the {\it (possibly) plastic set}, although it is not to be confused with the the support of the measure $p$ (see {\it e.g.} Theorem \ref{thm:BF1} and Example \ref{ex:1} below). 

\begin{remark}
As a consequence of Lemma \ref{lem:whatinOel} and of Theorem 
\ref{thm:struct-sigma-u} below, the set $\mathcal N$ must lie in $\partial\Opl\cap\O$. It will actually be proved in Theorem \ref{thm:cont_sigma} that that set consists of at most two points.
\hfill\P
\end{remark}

The open set $\Oel$ is similarly  referred to as the {\it elasticity set} because, whenever it is not empty, all solutions are purely elastic in $\Oel$. Indeed, 

\begin{lemma}\label{lem:whatinOel}
If $\Oel \neq \emptyset$, then ${u\in \C^\infty(\Oel)}$ is harmonic, {$\sigma=\nabla u$} and $|\nabla u|<1$ in $\Oel$.
\end{lemma}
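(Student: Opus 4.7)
The plan is to establish, in sequence, three facts: (a) $|p|(\Oel)=0$, (b) $u$ coincides with a weakly harmonic function in $\Oel$ and is therefore smooth, and (c) the strict inequality $|\sigma|<1$ holds pointwise in $\Oel$.

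For (a), the starting point is the pointwise form of the flow rule recalled in Remark~\ref{rmk:jump-flow-rule}(iv), namely
\[
\sigma(x)\cdot\frac{dp}{d|p|}(x)=1 \qquad\text{for $|p|$-a.e.\ } x\in\O,
\]
where $\sigma$ is identified with its quasi-continuous (precise) representative legitimized by \eqref{eq.reg-sigma}. Since $\bigl|\frac{dp}{d|p|}\bigr|=1$ and $|\sigma|\le 1$, the equality in the Cauchy--Schwarz inequality forces $|\sigma|=1$ $|p|$-a.e., so $|p|$ is concentrated on $\O_1$ (up to the Hausdorff-null exceptional set $\mathcal N$, which $|p|$ does not charge because the duality measure $[\sigma\cdot p]$ is defined through quasi-continuous approximation, see \cite{FG}). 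Since $\Oel=\O\setminus\overline{\O_1}$ is open and disjoint from $\overline{\O_1}$, one concludes $|p|(\Oel)=0$.

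For (b), the decomposition $Du=\sigma+p$ in $\M(\O;\R^2)$, restricted to the open set $\Oel$, reads $Du\res\Oel=\sigma\,\LL^2\res\Oel$. Since $\sigma\in L^2(\Oel;\R^2)$ this identifies $u$ with a function in $W^{1,2}_{\rm loc}(\Oel)$ whose weak gradient is $\sigma$. The Euler--Lagrange condition $\mathrm{div}\,\sigma=0$ in $H^{-1}(\O)$, and a fortiori in $\D'(\Oel)$, then translates into $\Delta u=0$ in $\D'(\Oel)$. Weyl's lemma (standard elliptic regularity) upgrades $u$ to a $\C^\infty$ harmonic function in $\Oel$, and accordingly $\sigma=\nabla u\in\C^\infty(\Oel;\R^2)$.

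For (c), the continuity of $\sigma=\nabla u$ on $\Oel$ makes its precise representative coincide pointwise with its continuous values. Were $|\sigma(x_0)|=1$ to hold at some $x_0\in\Oel$, then $x_0\in\O_1\subset\overline{\O_1}$, contradicting $x_0\in\Oel=\O\setminus\overline{\O_1}$. Hence $|\sigma|<1$ at every point of $\Oel$. The main obstacle is step (a): one must carefully justify that $|p|$ places no mass on the Hausdorff-null set $\mathcal N$. This rests on the construction of the duality pairing $[\sigma\cdot p]$ via quasi-continuous approximation in \cite{A2, FG}, which forces $|p|$ to interact with $\sigma$ only through its quasi-continuous representative; steps (b) and (c) are then routine consequences.
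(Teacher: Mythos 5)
Your proposal is correct and follows essentially the same route as the paper: kill $|p|$ on $\Oel$ via the flow rule, identify $u$ with an $H^1$ function with gradient $\sigma$ there, invoke $\Div\sigma=0$ and Weyl's lemma, and then rule out $|\sigma|=1$ by continuity and the definition of $\Oel=\O\setminus\overline{\O_1}$. The one place where you diverge is the point you yourself flag as the main obstacle, namely $|p|(\mathcal N)=0$: you defer this to the quasi-continuous construction of the pairing $[\sigma\cdot p]$ in \cite{A2,FG}, which is a legitimate but somewhat circular-looking appeal (the pointwise flow rule of Remark \ref{rmk:jump-flow-rule}-(iv) is only meaningful $|p|$-a.e.\ once one already knows $|p|$ does not charge the set where the precise representative is undefined). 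The paper disposes of this in one elementary line that you could substitute for the citation: since $p=Du-\sigma\LL^2$ in $\O$, one has $|p|\le |Du|+|\sigma|\LL^2$ as measures, and $|Du|(\mathcal N)=0$ because $u\in BV(\O)$ and $\HH^1(\mathcal N)=0$ (the variation measure of a $BV$ function vanishes on $\HH^1$-null sets), while $\LL^2(\mathcal N)=0$ trivially; hence $|p|(\mathcal N)\le\int_{\mathcal N}|\sigma|\,dx+|Du|(\mathcal N)=0$. With that substitution your argument is complete and self-contained; steps (b) and (c) coincide with the paper's.
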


\begin{proof}
Observe that $\Oel \subset \mathcal N \cup \{x \in \O \setminus \mathcal N : \; |\sigma(x)|<1\}$. Since $\HH^1(\mathcal N)=0$  and $u\in BV(\O)$, then {$|p|(\mathcal N) \leq \int_{\mathcal N}|\sigma|\, dx + |Du|(\mathcal N)=0$}. Moreover, using the flow rule \eqref{eq:frptwise},  $|p|(\{x \in \O \setminus \mathcal N : \; |\sigma(x)|<1\})=0$. As a consequence, $|p|(\Oel)=0$ and thus  $Du=\sigma\LL^2$ in $\Oel$. Since $\Oel$ is a (nonempty) open set, we infer that $u \in H^1(\Oel)$ with $\nabla u=\sigma$ a.e. in $\Oel$. Using the first equation in \eqref{eq:plast}, we deduce that $u$ is harmonic in $\Oel$. In particular, $u$ (and thus $\sigma=\nabla u$ as well) is smooth in $\Oel$, hence $\mathcal N \cap \Oel=\emptyset$ and $|\nabla u|<1$ in $\Oel$.
\end{proof}

\begin{remark}
Independently of our convexity hypothesis {\bf (H)}, \cite[Theorem 2.1.2]{FS}, or, to be exact, its scalar analogue, shows the existence of an open set $\O_0 \subset \O$ such that $u \in \C^{0,\alpha}(\O_0)$ for some $\alpha \in (0,1)$, $|\sigma|<1$ in $\O_0$ and $|\sigma|=1$ $\LL^2$-a.e. in $\O \setminus \O_0$. As a consequence of the flow rule \eqref{eq:frptwise} expressed in a pointwise form, we get that $|p|(\O_0)=0$ and thus,
$$
\begin{cases}
p=0 & \text{ in }\O_0,\\
Du=\sigma \LL^2 & \text{ in }\O_0,\\
\Delta u=0 & \text{ in }\O_0.
\end{cases}
$$
The function $u$ being harmonic in $\O_0$, we infer that $u \in  \C^\infty(\O_0)$. 

In view of  assumption {\bf (H)},   the same conclusion is ensured in our setting without appealing to that theorem.
\hfill\P
\end{remark}

\begin{remark}
The generalized least gradient problem (see {\it e.g.} \cite{JMN}) consists in minimizing functionals of the form   
$$u \in BV(\O) \mapsto \int_\O \varphi(\nabla u)\, dx$$
where $\p$ is a given norm, say in $\R^2$,
under a prescribed trace $g \in \C^0(\partial\O)$. A natural condition which ensures uniqueness of a minimizer with that trace is a so-called {\it barrier condition}. Roughly speaking this condition ensures the positivity of a generalized mean curvature of $\partial\O$ related to $\varphi$. This condition is in turn related to the strict convexity of $\O$. 

In our case, the integrand $W$ only behaves like a norm for $|\nabla u|\geq 1$, that is  where the inequality constraint on $\sigma$ is active, {\it i.e.} $|\sigma|=1$. It is natural to expect some sort of convexity property of $\O_1$. Unfortunately, in contrast with to \cite{JMN} where the assumption is solely on the set $\O$,  our case requires  assumption {\bf (H)} on the {\it a priori} unknown set $\O_1$.  We do not know if {\bf (H)} is satisfied in general but do not have a counterexample.
\hfill\P
\end{remark}

\medskip

Although the stress $\sigma$ is always unique, it is not in general so for the displacement $u$ and the plastic strain $p$. This has been evidenced in \cite[Subsection 1.2]{BF} where infinitely many solutions are constructed for a mixed boundary value problem for which $\Opl=\O$. We now complement this example by producing a unique plastic strain (hence a unique solution $(u,\sigma,p)$ to the minimization problem \eqref{eq:minim}), but with a region $\Opl$ which is {a convex open set {\it strictly} contained in $\O$}. Boundary conditions can be picked to be pure Dirichlet, or of mixed type. 
\begin{figure}[htbp]
\scalebox{1}{\begin{tikzpicture}

 \draw[style=thick]  (5,0) arc (0:180:5) ; \draw[color=black] (3,0) arc(0:180:3) ; \draw[color=blue][<->] (5.1,0) arc (0:180:5.1) ;
\fill[color=gray!20] (3,0) arc(0:180:3) ;

\draw [color=black!80](-1.65,1.2)node{\framebox{\tiny{{\it plastic region}}}};
\draw[style=thick] (-5,0) -- (5,0) ;

 \draw(2.7,1.3) node[right]{$r=a$};
\draw(2.9,3) node[right]{$r=b$};

\draw [color=red][style=thick] (-3,0)-- (2.1213,2.1213);
\draw [style=thin] (0,0)-- (2.1213,2.1213);

\draw[color=blue][<->] (-5,-.1)--(-3,-.1);
\draw[color=blue][<->] (-3,-.3)--(5,-.3);

\draw (-1,1.9)node {\color{red}$\sigma=\vec{e}_{\frac\theta2}$};
\draw (-1,4)node {\color{red}$u_{el}=2\sqrt{ar}\sin{\frac\theta2}$};
\draw (-1,3.4)node {\color{red}$\sigma=\nabla u_{el}=\sqrt{a/r}(\sin{\frac\theta2},\cos{\frac\theta2})$};
\draw( .9,1) node{\color{red}$u\equiv 2a\sin{\frac\theta2}$};
\draw[color=red][->](-.1,.95)--(-.3,1.1);

\draw[color=blue] (-5,-.6) node[right] {$\displaystyle\frac{\partial u}{\partial\theta}=0$};
\draw[color=blue] (0,-.5) node[right] {$u=0$};
\draw[color=blue] (4.8,2.5) node[right] {$u=2\sqrt {ab} \sin{\theta/2}$};
\draw (0,0) node[above]{O}; \draw (0,0) node {$\bullet$} ;

\draw (.5,.2) node {\small$\theta$}; \draw (0.3,0) arc (0:55:.2) ;
\draw (-1.95,.18) node {\small$\theta/2$}; \draw (-2.4,0) arc (0:22:.6) ;

\end{tikzpicture}}\caption{The MacClintock example.}
\label{fig:mcC}
\end{figure}
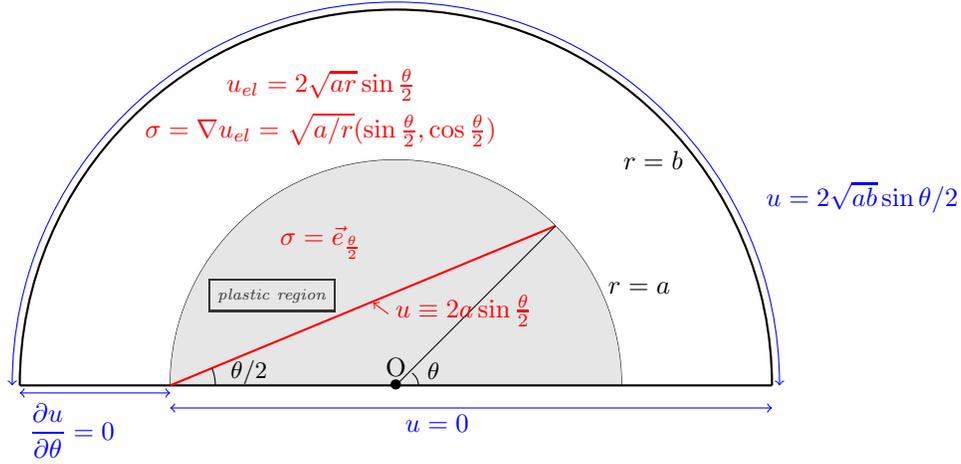

\begin{example}[{\bf The modified MacClintock example}]\footnote{This example is motivated by the solution given by F. A. MacClintock to the elasto-plastic field around the crack tip of a semi-infinite straight crack in so-called mode III.}\label{ex.Mc}
{\rm Consider a half-disk of radius $b$ with  boundary conditions as shown on Figure \ref{fig:mcC}. Note that the boundary condition on $\theta=\pi$ switch from homogeneous Neumann for $a\le r\le b$ to homogeneous Dirichlet for $r\le a$. The solution is elastic ($|\sigma(x)|<1$) in the half-annulus $a<r<b$. Then the plastic region is $r\le a$ and it corresponds to a fan centered at $(r=a,\theta=\pi)$, so that the associated stress field is $\sigma= \vec{e}_{\frac\theta2}$ in terms of the angle $\theta$. According to Remark \ref{rmk:jump-flow-rule}-(i) there can be no jump at $r=a$ because the normal $\vec{e}_r$ is not alined with $\sigma=\vec{e}_{\frac\theta2}$. Further, $u$ must remain constant along the characteristic line segment given in polar coordinates by $](a,\pi),(a,\theta)[$ (see \cite[Theorem 6.2]{BF}). Thus it is equal to $2a\sin{\frac\theta2}$ along that line segment. The solution is therefore unique.

Remark that we could have imposed a Dirichlet boundary condition on the line segment $](b,\pi),(a,\pi)[$ in lieu of the Neumann boundary condition. That boundary condition should then be $w=2\sqrt{ar}$ and there would thus be a jump in $w$ at $(r=a,\theta=\pi)$. By Gagliardo's theorem $w$ would still be the trace on $\pa \O$ of a  $W^{1,1}(\R^2)$ function so that  our analysis equally applies to the pure Dirichlet case.\hfill\P}
\end{example}

\subsection{When the saturation set has empty interior}\label{subsec.empty} 

In that case, the solution is purely elastic inside the full domain $\O$, except possibly on a segment separating $\O$ into two connected components.

\begin{theorem}\label{thm:BF1}
{Let $\O \subset\R^2$ be a bounded open set with Lipschitz boundary,} $w\in L^1(\pa \O)$ and $u \in BV(\O)$ be a solution of \eqref{eq.rp}. Under hypothesis {\bf (H)}, if, further, $\O_1$ has empty interior in $\O$, then 
\begin{itemize}
\item either $\O_1=\emptyset$ and $u \in \C^\infty(\O) \cap \C^{0,1}(\overline{\O})$ satisfies
\begin{equation}\label{eq:eq1}
\begin{cases}
-\Delta u= 0 & \text{ in }\O,\\
|\nabla u|< 1 & \text{ in }\O,\\
\partial_\nu u=g & \text{ on }\partial_N\O,\\
(w-u)\partial_\nu u=|w-u| & \text{ on }\partial_D\O;
\end{cases}
\end{equation}
\item or $\O_1=S$ for some open line segment $S$ separating $\O$ into two connected components denoted by $\O^\pm$.
{Then, $u \in \C^\infty(\O^\pm) \cap \C^{0,1}(\O^\pm \cup S)$, and denoting by $u^\pm$ the one-sided traces of $u$ on $S$, it satisfies}
\begin{equation}\label{eq:eq2}
\begin{cases}
-\Delta u= 0 & \text{ in }\O^\pm,\\
|\nabla u|< 1 & \text{ in }\O^\pm,\\
u^\pm \text{ are constant on }S,\\
\partial_\nu u=g & \text{ on }\partial_N\O,\\
(w-u)\partial_\nu u=|w-u| & \text{ on }\partial_D\O.
\end{cases}
\end{equation}
\end{itemize}
In both cases, if in addition $\partial_D\O=\partial\O$, then $u$ is unique.
\end{theorem}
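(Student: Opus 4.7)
Under hypothesis {\bf (H)}, $\O_1$ is convex in $\R^2$ with empty interior, so it lies on an affine line $L$ and, being convex there, is an interval. When $\O_1=\emptyset$, Lemma \ref{lem:whatinOel} directly yields $u\in \C^\infty(\O)$, harmonic with $\sigma=\nabla u$ and $|\nabla u|<1$; the bound on $|\nabla u|$ makes $u$ $1$-Lipschitz, hence $u\in \C^{0,1}(\overline\O)$. The Neumann identity $\partial_\nu u=g$ on $\partial_N\O$ then comes from $\sigma\cdot \nu=g$ in \eqref{eq:plast}, and the Dirichlet identity $(w-u)\partial_\nu u=|w-u|$ is obtained by combining the flow rule $|p|=[\sigma\cdot p]$, the localization formula of Remark \ref{rmk:jump-flow-rule}(i), and $p=(w-u)\nu\HH^1$ on $\partial_D\O$.

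When $\O_1\neq\emptyset$, the delicate point is to show that the closed hull $\overline{\O_1}\subset L$ actually stretches from $\partial\O$ to $\partial\O$, i.e.\ $\O_1=S$ is an open segment separating $\O$ into two components $\O^\pm$. My plan is to argue by contradiction: if an endpoint $A$ lay in the interior of $\O$, then in the slit neighborhood of $A$ contained in $\Oel$, Lemma \ref{lem:whatinOel} gives $u$ harmonic and smooth with $|\sigma|<1$, whereas the flow rule on the segment forces $|\sigma\cdot\nu|=1$ on both one-sided traces along the slit; pushing these traces to $A$ using the $H^1_{\rm loc}$-regularity of $\sigma$ and the subharmonicity of $|\sigma|^2$ in $\Oel$ produces the incompatibility $|\sigma(A)|\ge 1$ together with $|\sigma|<1$ in $\Oel$. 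Once the geometry is pinned down, Lemma \ref{lem:whatinOel} applied separately in $\O^\pm$ provides the harmonicity, smoothness and strict gradient bound of \eqref{eq:eq2}, and the boundary identities on $\partial\O$ follow exactly as in the first case. For the constancy of the one-sided traces $u^\pm$ on $S$, Remark \ref{rmk:jump-flow-rule}(i) applied on $S$ gives $|\sigma\cdot\nu|=1$ $\HH^1$-a.e.\ on $S\cap J_u$; combined with $|\sigma|=1$ on $S$ and the $\C^\infty$ regularity of $u$ in $\O^\pm$, this yields $|\partial_\nu u^\pm|=1$ $\HH^1$-a.e.\ on $S$, and Lemma \ref{lem.u=cst} applied to $u|_{\O^\pm}$ with $\Gamma=S$ then shows that $u^\pm$ is constant on $S$.

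For uniqueness under pure Dirichlet data, let $u_1,u_2$ be two minimizers. Strict convexity of the quadratic $\sigma$-term makes $\sigma$ unique, so $\nabla u_1=\nabla u_2=\sigma$ on the elastic region and the difference $u_1-u_2$ is locally constant there. The boundary identity $(w-u_i)(\sigma\cdot\nu)=|w-u_i|$ on $\partial\O$ has the pointwise consequence that wherever $|\sigma\cdot\nu|<1$ one must have $u_i=w$; thus on any connected component of $\partial\O$ adjacent to a given component of the elastic region on which $\{|\sigma\cdot\nu|<1\}$ has positive $\HH^1$-measure, the locally constant difference $u_2-u_1$ vanishes, forcing $u_1=u_2$. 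The remaining possibility -- $|\sigma\cdot\nu|=1$ $\HH^1$-a.e.\ on some such component $\Gamma$ -- is handled by Lemma \ref{lem.u=cst}, which gives $u$ constant on $\Gamma$: combined with the sign constraint $(w-u_i)(\sigma\cdot\nu)\ge 0$ on $\Gamma$ and equality of the energies $E(u_1)=E(u_2)$, an elementary computation with the $L^1$-boundary term $\int_\Gamma|w-u_i|\,d\HH^1$ again forces the locally constant difference to vanish. The main obstacle in executing this plan is the structural step for Case 2, namely the delicate local analysis near a putative interior endpoint of $\overline{\O_1}$.
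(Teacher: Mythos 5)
Your first bullet and the broad outline of the uniqueness step are in the right spirit, but the structural step for $\O_1\neq\emptyset$ --- which you yourself flag as the main obstacle --- contains a genuine gap, and the paper resolves it by an entirely different mechanism. Your contradiction argument near a putative interior endpoint $A$ of $\overline{\O_1}$ does not close: the flow rule only forces $|\sigma\cdot\nu|=1$ $\HH^1$-a.e.\ on $S\cap J_u$, i.e.\ where $u$ actually jumps, and nothing guarantees a priori that $u$ jumps on $\O_1$ near $A$. Moreover, even granting $|\sigma|=1$ on $\O_1$ (which holds by definition of the saturation set), there is no incompatibility with $|\sigma|<1$ in $\Oel$, since $A\in\overline{\O_1}$ is not a point of $\Oel$ and the precise representative of an $H^1_{\rm loc}$ field can perfectly well have modulus $1$ on a segment while being $<1$ off it --- indeed the second alternative of the theorem describes exactly such a configuration. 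What the paper actually does is study the jump set $J=\{u^+\neq u^-\}$ of $u$ across the full chord $L$ of $\O$ containing $\O_1$: since $u\in W^{1,\infty}(\O^\pm)$ the one-sided traces $u^\pm$ are continuous on $L$, $J$ is relatively open and contained in $\O_1$, and on each component $J_j=\;]\alpha_j,\beta_j[$ the flow rule gives $\sigma=\pm\nu$, so Lemma \ref{lem.u=cst} makes $u^\pm$ constant there; matching $u^+=u^-$ at any endpoint $\alpha_j,\beta_j$ lying inside $\O$ then forces either $J=\emptyset$ or $J=L$. In the first case $u\in H^1(\O)$ is harmonic across $S$, and the subharmonicity of $|\nabla u|^2$ together with the mean value property yields $|\nabla u|<1$ everywhere, hence $\O_1=\emptyset$; in the second case $S=L$ is a full separating chord. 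Neither of these two arguments appears in your proposal, and without them you cannot exclude, for instance, a nonempty segment $\O_1$ compactly contained in $\O$ across which $u$ does not jump.

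A secondary issue: in the uniqueness step, your treatment of the degenerate sub-case $|\sigma\cdot\nu|=1$ $\HH^1$-a.e.\ on $\partial\O$ via ``equality of the energies'' does not work. Knowing that both $\int_{\partial\O}|w-u_1|\,d\HH^1$ and $\int_{\partial\O}|w-u_1-c|\,d\HH^1$ are minimal only says that $0$ and $c$ both minimize a convex function of the shift, which does not force $c=0$ (the relevant median need not be unique). The paper instead shows that this sub-case simply cannot occur: Lemma \ref{lem.u=cst} makes $u$ constant on $\partial\O$, the maximum principle then makes $u$ constant in $\O$, contradicting $|\partial_\nu u|=1$; hence the set $A\subset\partial\O$ where $|\sigma\cdot\nu|<1$ has positive measure and the flow rule pins $u_1=u_2=w$ there.
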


\begin{proof}
If $\O_1$ is a convex set with empty interior, there exists a line segment $S$ such that $\O_1=S \cap \O$. Without loss of generality, we assume that $0 \in S$ and denote by  $\nu$ a (constant) unit vector orthogonal to $S$. {Let $L$ be the connected component of $(\nu^\perp \R) \cap \O$ containing $\O_1$ and  $\O^\pm$ be the two connected components of $\O \setminus L$.} By definition, $\Oel=\O \setminus \overline S$ and, by Lemma \ref{lem:whatinOel}, $u \in \C^\infty(\O \setminus \overline S)$ is harmonic in $\O \setminus \overline S$. Moreover, 
$$Du=\nabla u \LL^2 + (u^+-u^-)\nu \HH^1\res S,$$
where $u^\pm$ are the one-sided traces of $u$ on $L \cap \O$ according to this orientation. Further, {by Remark \ref{rmk:jump-flow-rule}-(i)}, the flow rule yields
\begin{equation}\label{eq:flbis}
(\sigma\cdot\nu) (u^+-u^-)=|u^+-u^-| \quad \HH^1\text{-a.e. on }S.
\end{equation}
Since $u \in BV(\O) \subset L^2(\O)$ and $D u\res \O^\pm=\sigma\LL^2\res \O^\pm$ with $\sigma \in L^2(\O;\R^2)$, then $u \in H^1(\O^\pm)$ with $\nabla u=\sigma$ in $\O^\pm$, and $u^\pm \in H^{1/2}(L \cap \O)$. Since $\O^\pm \subset \Oel$,  Lemma \ref{lem:whatinOel} implies that $|\nabla u|=|\sigma|<1$ in $\O^\pm$, hence $u\in W^{1,\infty}(\O^\pm)$. Thus $u$ can be extended by continuity to a  $u \in \C^{0,1}_{\rm loc}({\O}^\pm\cup L)$-function. In particular, the traces $u^\pm$ of $u$ are continuous on {$L$} and thus, the set 
$$J=\{ x \in {L} : \; u^+(x) \neq u^-(x)\},$$
which is included in $\ol S$, is relatively open in {$L$}, hence included in $\O_1=S\cap\O$. 

\medskip

If $J \neq \emptyset$, { $J=\bigcup_{j \in \N} J_j$ where $\{J_j=\; ]\alpha_j,\beta_j[\}_{j \in \N}$ are pairwise disjoints open line segments and the flow rule \eqref{eq:flbis}} yields $|\sigma\cdot \nu|=1$ on $J_j$.
Moreover, by definition \eqref{eq:O1} of $\O_1$, $|\sigma|=1$ on $\O_1$. We thus get that $\sigma=\pm\nu$ on $J_j$. Since $\sigma \in H^{1/2}(J_j;\R^2)$, \cite[Lemma A.2]{BF} yields $\sigma=\nu$ or $\sigma=-\nu$ on $J_j$. Then Lemma \ref{lem.u=cst} ensures that $u^+$ and $u^-$ are both  constant on $J_j$. By continuity of $u^\pm$ on {$L$} and using that $u^+(\alpha_j)=u^-(\alpha_j)$ and $u^+(\beta_j)=u^-(\beta_j)$, this is possible only if {$J=L$} and $u^\pm=c^\pm$ on {$L$}, for some constants $c^\pm \in \R$ with $c^+\neq c^-$. Hence \eqref{eq:eq2}.

\medskip

If instead $J=\emptyset$, we get that $u \in H^1(\O^\pm)$ satisfies $u^+=u^-$ on {$L$}. This ensures that actually $u \in H^1(\O)$. Moreover, since $\nabla u =\sigma \in H^1_{\rm loc}(\O;\R^2)$, we deduce that $u \in H^2_{\rm loc}(\O)$. We thus obtain from Green's formula that for all $\varphi \in \C^\infty_c(\O)$,
\begin{multline*}
\int_\O u\Delta \varphi\, dx = \int_\O \varphi \Delta u\,dx + \int_S \left(u^+(\partial_\nu \varphi) -\p(\partial_\nu u)^+\right) d\HH^1\\ -\int_S \left(u^-(\partial_\nu \varphi) -\p(\partial_\nu u)^-\right) d\HH^1= \int_{\O\setminus S} \varphi \Delta u\,dx=0.
\end{multline*}
We thus infer that $u$ is actually harmonic in all of $\O$, hence of class $\C^\infty$ on that set. 

Classical properties of harmonic functions show that $|\nabla u|^2$ is subharmonic, {\it i.e.} $-\Delta (|\nabla u|^2) \leq 0$ in $\O$. We already know that $|\nabla u|<1$ in $\Oel=\O \setminus \overline S$. Let us now consider $x_0 \in \overline S$ and $r>0$ small enough so that $B_r(x_0) \subset\subset \O$. The mean value property yields that
$$|\nabla u(x_0)|^2 \leq \fint_{B_r(x_0)} |\nabla u|^2\, dx.$$
Since $|\nabla u|<1$ $\LL^2$-a.e. in $\O$, we get that $\fint_{B_r(x_0)} |\nabla u|^2\, dx<1$, hence $|\nabla u(x_0)|<1$. Thus $|\nabla u|<1$ in all of $\O$, $\O_1=\emptyset$ and since $\O$ has Lipschitz boundary, we deduce that $u$ extends to a Lipschitz-continuous function in $\overline\O$. Hence \eqref{eq:eq1}.

\medskip

It remains to show the uniqueness of $u$ in the pure Dirichlet case $\partial_D\O=\partial\O$. In the  case where $\O_1=\emptyset$, we first notice that there exists an $\HH^1$-measurable $A \subset \partial\O$ such that $\HH^1(A)>0$ and $|\sigma\cdot\nu|<1$ $\HH^1$-a.e. on $A$. Else, using that $\sigma=\nabla u$, we get that 
\begin{equation}\label{eq:0930}
|\sigma\cdot \nu|=|\partial_\nu u|=1 \quad \HH^1\text{-a.e. on }\partial\O,
\end{equation}
and according to Lemma \ref{lem.u=cst}, $u$ must remain constant on $\pa\O$. But then the maximum principle implies that $u$ is constant on the convex (hence connected) set $\O$ which contradicts the starting assumption \eqref{eq:0930}. So that case does not occur. Let $u_1$ and $u_2$ be two solutions of  \eqref{eq:plast}. By uniqueness of $\sigma=\nabla u_1=\nabla u_2$, there exists a constant $c \in \R$ such that  $u_2=u_1+c$. As $|\sigma\cdot\nu|<1$ $\HH^1$-a.e. on $A$, the flow rule 
$$(w-u_i)\sigma\cdot\nu=|w-u_i| \quad \HH^1\text{-a.e. on }A$$
ensures that $u_1=u_2=w$ $\HH^1$-a.e. on $A$, which yields $c=0$ and $u_1=u_2$ in $\O$.

The second case $\O_1=S$ can be treated similarly, arguing separately on $\O^+$ and $\O^-$ {and using that $u^\pm$ are constant on {$S=L$}}.
\end{proof}

When $\O_1 =\emptyset$ Theorem \ref{thm:BF1} shows the uniqueness of the displacement $u$, hence of the plastic strain $p$ although the Dirichlet boundary condition might fail to be satisfied. We now give an example of a plastic strain concentrated on a set of $0$-volume on the Dirichlet boundary, showing that such a situation can indeed occur.

\begin{example}\label{ex:1}
{\rm Consider a circular annulus $\O$ of inner radius $a$ and outer radius $b$ subject to the following Dirichlet boundary conditions:
$$\begin{cases}
w(r=a)=\alpha\\w(r=b)=\beta
\end{cases}
$$ with $|\beta-\alpha|>a\ln(b/a)$, and look for a radially symmetric stress in \eqref{eq:plast}. The only possible divergence free stress is of the form $\sigma=\frac dr \vec{e}_r$. Then, for $|\sigma|$ to be less than or equal to $1$ on $\O$ we must have $|d|\le a$ so that, if some plasticity is desired, $|d|=a$. For $r>a$, we have $|\sigma|<1$, so $\sigma=\nabla u$ and $u(r)= \pm a\ln r+k(\theta)$. The boundary condition $u(r=b)=\beta$ must be met, so that $k(\theta)= \beta\mp a\ln b$.

According to Remark \ref{rmk:jump-flow-rule}-(i), at $r=a$, $\mp (\alpha- u(r=a))=|\alpha-u(r=a)|$ where $u(r=a)$ is the value of the elastic solution at $r=a$, that is $\pm a\ln{a/b}+ \beta$. 
We get 
$$\pm(\beta-\alpha\mp a\ln{b/a})=|\beta-\alpha\mp a\ln{b/a}|.$$
If $\beta-\alpha>a\ln(b/a)$, then $d=a$ while, if $\beta-\alpha<-a\ln(b/a)$, $d=-a$. In both cases $u$ jumps at $r=a$ with an associated $p=(u(a)-\alpha)\delta_{r=a}$.

Summing up, we have 
\begin{equation}\label{eq:sol-pb1}
u=\pm a \ln r/b+\beta,\quad p=(u(a)-\alpha)\delta_{r=a}
\end{equation}
as a solution to \eqref{eq:plast}. We claim that this is the only solution. Indeed, $u(r),\; r>a$ is unique since $\nabla u$ is unique and its value at $r=b$ is given. So, $p$ must concentrate on $r=a$ and, because of the flow rule (the last equation in \eqref{eq:plast}), it must be given by its expression in \eqref{eq:sol-pb1}.
}
\end{example}

\subsection{When the saturation set has non empty interior}\label{sec.nei}

From now on, we assume that $\O_1$ has nonempty interior so that the set $\Opl$ is a nonempty convex open set. In the rest of this work, we denote by
\begin{equation}\label{eq:def-Sigma}\Sigma:=(\partial\Oel \cap \partial\Opl) \cap \O
\end{equation}
the interface between the elastic and plastic parts.

\begin{lemma}
The sets $\Oel$, $\Opl$ and $\Sigma$  satisfy
\begin{equation}\label{def:interface}
\Oel=\O \setminus \overline \O_{pl} , \quad \Opl=\O \setminus \overline \O_{el} , \quad \Sigma=\O \cap \partial \Opl=\O \cap \partial \Oel.
\end{equation} 
\end{lemma}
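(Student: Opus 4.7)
The plan is to reduce everything to two classical convex-analysis facts: for a convex set $C\subset\R^2$ with nonempty interior, $\overline{\mathrm{int}(C)}=\overline{C}$ and $\mathrm{int}(\overline{C})=\mathrm{int}(C)$. Both apply here because hypothesis \textbf{(H)} makes $\O_1$ convex and we are precisely in the subsection where $\Opl=\mathrm{int}(\O_1)\neq\emptyset$.

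First I would handle $\Oel=\O\setminus\overline{\Opl}$. By definition $\Oel=\O\setminus\overline{\O_1}$, while the first convex identity yields $\overline{\Opl}=\overline{\mathrm{int}(\O_1)}=\overline{\O_1}$, and the equality follows at once.

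Next, for $\Opl=\O\setminus\overline{\Oel}$: the inclusion ``$\subset$'' is immediate because any $x\in\Opl$ admits an open neighborhood contained in $\Opl$, hence disjoint from $\Oel$, so $x\notin\overline{\Oel}$. For the converse, take $x\in\O\setminus\overline{\Oel}$ and choose an open ball $B\subset\O$ around $x$ that avoids $\Oel=\O\setminus\overline{\O_1}$; then $B\subset\overline{\O_1}$. Since $B$ is open, $B\subset\mathrm{int}(\overline{\O_1})$, and the second convex identity turns this into $B\subset\mathrm{int}(\O_1)=\Opl$, so $x\in\Opl$.

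Finally, for $\Sigma=\O\cap\partial\Opl=\O\cap\partial\Oel$: by the first convex identity, $\partial\Opl=\overline{\Opl}\setminus\Opl=\overline{\O_1}\setminus\Opl$. On the other hand, a point $x\in\O$ lies in $\partial\Oel$ iff $x\in\overline{\Oel}\setminus\Oel$, and using the decomposition $\O=\Opl\sqcup(\O\cap(\overline{\O_1}\setminus\Opl))\sqcup\Oel$ together with the fact that every neighborhood of an $x\in\O\cap(\overline{\O_1}\setminus\Opl)$ meets $\Oel$ (else $x$ would lie in the interior of $\overline{\O_1}$, hence in $\Opl$ by the second convex identity, a contradiction), one obtains $\O\cap\partial\Oel=\O\cap(\overline{\O_1}\setminus\Opl)=\O\cap\partial\Opl$. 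Intersecting the two proves the last claim via the definition \eqref{eq:def-Sigma} of $\Sigma$.

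There is no real obstacle: the entire argument is bookkeeping, and the only non-tautological inputs are the two standard convex-analysis identities. It is worth noting that hypothesis \textbf{(H)} is genuinely used — without it the identities $\overline{\mathrm{int}(\O_1)}=\overline{\O_1}$ and $\mathrm{int}(\overline{\O_1})=\mathrm{int}(\O_1)$ can fail, and with them the three asserted equalities.
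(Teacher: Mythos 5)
Your proof is correct and follows essentially the same route as the paper's: both arguments rest on the two standard identities $\overline{{\rm int}(\O_1)}=\overline{\O_1}$ and ${\rm int}(\overline{\O_1})={\rm int}(\O_1)$ for the convex set $\O_1$ with nonempty interior (the paper cites \cite[Proposition 3.45]{BC} for these), followed by the same elementary topological bookkeeping. No gaps.
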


\begin{proof}
Since $\O_1$ is convex, according to \cite[Proposition 3.45]{BC} and \eqref{eq:OplOel}, we have $\overline\O_1=\overline \O_{pl} $, $\Opl={\rm int}(\overline \O_{pl} )$ and $\Oel=\O \setminus \overline \O_{pl} $.

First, as $\O \setminus \overline \O_{el}  \subset \overline \O_{pl}  \cap \O$, then $\O \setminus \overline \O_{el} \subset {\rm int}(\overline \O_{pl} )=\Opl$. Moreover, if $x \in \O \cap \overline \O_{el}  \cap \Opl$, there exists $r>0$ such that $B_r(x) \subset \Opl$ and $y \in B_r(x)$ such that $y \in \Oel$. Then $y \in \Oel \cap \Opl$ which is impossible. This implies that $\Opl \subset \O \setminus\overline \O_{el} $, hence  $\Opl = \O \setminus\overline \O_{el} $.

Recalling that $\O \cap \overline \O_{pl}  = \O \setminus \Oel$ and $\O \setminus \Opl = \O \cap \overline \O_{el} $, we get that $\O \cap \partial \Opl = \O \cap \partial \Oel=\Sigma$.  
\end{proof}

{According to Theorems 5.1 and 5.6 in \cite{BF} (applied to $\O_p$ now denoted by $\Opl$),  the following rigidity properties of $u$ and $\sigma$ hold true in $\Opl$.

\begin{theorem}\label{thm:struct-sigma-u}
The stress $\sigma$ is locally Lipschitz in $\Opl$ and $\sigma$ is constant along all line segments
$L_y \cap \Opl$, where, for $y \in \Opl$, 
$$
L_y:=y+\R \sigma^\perp(y)
$$
is called a  {\it characteristic line}. Moreover, there exists an $\HH^1$-negligible set $Z \subset \Opl$ such that $\LL^2(\Opl \cap (\bigcup_{z \in Z} L_z))=0$ and $u$ is constant along $L_x \cap \Opl$ for all $x \in \Opl \setminus (\bigcup_{z \in Z} L_z)$.
\end{theorem}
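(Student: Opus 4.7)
The plan is to verify that the hypotheses of Theorems 5.1 and 5.6 in \cite{BF} are satisfied on $\Opl$ and then to appeal directly to those results. The key structural input is that on the open convex set $\Opl = {\rm int}(\O_1)$ we have, by \eqref{eq:plast} and the definition of $\O_1$ in \eqref{eq:O1},
\[
|\sigma| = 1 \ \text{ a.e.\ in } \Opl, \qquad {\rm div}\,\sigma = 0 \ \text{ in } \O,
\]
together with $\sigma \in H^1_{\rm loc}(\O;\R^2)$ from \eqref{eq.reg-sigma}. Thus $\sigma|_{\Opl}$ is a unit-norm, divergence-free, $H^1_{\rm loc}$ vector field on a convex open set, precisely the setting in which the entropy methods initiated in \cite{DKMO,JOP} apply. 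They yield the rigidity conclusions of \cite[Theorem 5.1]{BF}: locally $\sigma = (\cos\varphi, \sin\varphi)$ for a Sobolev angle $\varphi$, the relevant entropies force $\varphi$ to be locally constant along $\sigma^\perp$-directions, and these directions degenerate to straight characteristic lines $L_y = y + \R\sigma^\perp(y)$ along which $\sigma$ is constant. The same analysis upgrades $\sigma$ to a locally Lipschitz field inside $\Opl$.

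For the displacement, corresponding to \cite[Theorem 5.6]{BF}, I would combine the flow rule $p = \sigma|p|$ (Remark \ref{rmk:jump-flow-rule}-(iv)) with the strain decomposition $Du = \sigma + p$ from \eqref{eq:plast} to obtain
\[
Du = \sigma \mu, \qquad \mu := \LL^2 + |p| \in \M(\O).
\]
Taking the distributional curl produces the scalar continuity equation ${\rm div}(\sigma^\perp \mu) = 0$ in $\Opl$. Since $\sigma$ is now Lipschitz on compact subsets of $\Opl$, the ODE $\dot\gamma = \sigma^\perp(\gamma)$ has uniquely defined $\C^1$ integral curves — which are in fact straight-line segments, because $\sigma$ is constant along them. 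A disintegration of $\mu$ along this Lipschitz foliation, combined with the continuity equation, shows that on each characteristic the tangential derivative of $u$ vanishes, i.e.\ $u$ is constant on $L_x \cap \Opl$ for $\LL^2$-a.e.\ $x \in \Opl$.

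The main obstacle is isolating the exceptional set $Z$ and proving both that it is $\HH^1$-negligible and that the portion of $\Opl$ it sweeps, $\bigcup_{z \in Z} L_z$, has zero planar measure. This step cannot be done by a pure Lipschitz-ODE argument since the problematic characteristics are precisely those emanating from vortex-type apex points on $\partial\Opl$ or from boundary singularities where $\sigma$ may fail to be continuous — the fan/convex-zone dichotomy of \cite[Theorem 1.3]{BF}. I would handle them by parameterizing characteristics by an arc-length transverse to the foliation, applying a Fubini-type argument along this parameter to pass from ``constant along $\LL^2$-a.e.\ characteristic'' to ``constant along all characteristics outside an $\HH^1$-negligible exceptional parameter set'', and exploiting the countability of the exceptional fans to ensure the swept set is $\LL^2$-null. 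This combinatorial/measure-theoretic accounting along the foliation is where the detailed geometric description of \cite{BF} is indispensable.
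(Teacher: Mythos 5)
Your proposal is correct and takes essentially the same route as the paper: the paper gives no internal proof of this statement but simply invokes Theorems 5.1 and 5.6 of \cite{BF} applied to $\Opl$, exactly as you propose, after noting that $\sigma$ is a unit-norm, divergence-free, $H^1_{\rm loc}$ field on the convex open set $\Opl$. Your additional sketch of the entropy/characteristic arguments and the treatment of the exceptional set $Z$ accurately reflects the content of those cited theorems, so nothing further is needed.
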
}

\begin{remark}\label{rem:cont-|sigma|-Sigma}
Since $\Opl$ is an open set with Lipschitz boundary and $\Sigma$ is an open subset of $\partial\Opl$ in the relative topology of $\partial\Opl$, we infer that $\Sigma$ is locally the graph of a Lipschitz function (see Propositions 2.4.4 and 2.4.7 in \cite{HP}). Moreover, as $\sigma \in H^1_{\rm loc}(\O;\R^2)$, we get that 
$$|\sigma|=1 \quad \HH^1\text{-a.e. on } \Sigma.$$
Indeed, since $\sigma \in H^1_{\rm loc}(\O;\R^2) \cap L^\infty(\O;\R^2)$,  $|\sigma|^2 \in H^1_{\rm loc}(\O)$. Using that $|\sigma|=1$ $\LL^2$-a.e. in $\Opl$ by definition of that set, and that $\Sigma = \O \cap \partial\Opl$, we obtain that the trace of $|\sigma|$ satisfies $|\sigma|=1$ $\HH^1$-a.e. on $\Sigma$. 

In particular, at least when $\Sigma$ is smooth enough, since $\Delta\sigma =0$ in $\Oel$, then $|\sigma| \in \C^0(\Oel\cup\Sigma)$ (see \cite[Theorem A3.3]{brezis.nirenberg}).  We will actually prove  a stronger result, see Theorem \ref{thm:cont_sigma}, namely, that $\sigma$ (and not only its modulus) is continuous in $\Omega$ except possibly at two single points of $\Sigma$, and that $\sigma \in \C^0(\O;\R^2)$, provided $\Sigma$ has a well defined normal at these points. In particular, we will have, in the notation of Theorem \ref{thm:cont_sigma}, that $|\sigma(x)|=1$ for all $x \in \Sigma \setminus \mathcal Z$.
\hfill\P
\end{remark}

We now further investigate the properties satisfied by $u$ in the plastic zone $\Opl$. First,

\begin{lemma}\label{lem:loc-const}
The function $u$ cannot be locally constant in $\Opl$.
\end{lemma}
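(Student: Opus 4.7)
My plan is to argue by contradiction, using the decomposition $Du=\sigma+p$ together with the pointwise form of the flow rule recalled in \eqref{eq:frptwise} and the fact that $|\sigma|=1$ $\LL^2$-a.e. in $\Opl$ (which follows from the very definition \eqref{eq:O1}-\eqref{eq:OplOel} of $\Opl$, up to the $\HH^1$-negligible exceptional set $\mathcal N$).

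Suppose for contradiction that there exists a ball $B\subset\Opl$ on which $u$ is constant. Then $Du=0$ as a measure on $B$, and from the fourth equation of \eqref{eq:plast} we obtain
\[
\sigma\,\LL^2\res B + p\res B = 0 \quad \text{in }\M(B;\R^2).
\]
In particular, $p\res B$ is absolutely continuous with respect to $\LL^2$ with density $-\sigma$. Since $|\sigma|=1$ $\LL^2$-a.e.\ in $\Opl$, this forces $|p|\res B=\LL^2\res B$ and
\[
\frac{dp}{d|p|}(x)=-\sigma(x) \quad \text{for $|p|$-a.e.\ }x\in B.
\]

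I would then apply the pointwise flow rule \eqref{eq:frptwise}, which yields
\[
1=\sigma(x)\cdot\frac{dp}{d|p|}(x)=-|\sigma(x)|^2=-1 \quad \text{for $|p|$-a.e. }x\in B,
\]
an obvious contradiction, provided $|p|(B)>0$. But we have just shown $|p|\res B=\LL^2\res B$, so $|p|(B)=\LL^2(B)>0$, and the argument is complete.

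The only conceivable obstacle is to make sure the flow rule can be read pointwise $|p|$-a.e.\ at points of $B\subset\Opl$; this is exactly what is guaranteed by Remark~\ref{rmk:jump-flow-rule}-(iv) (via the quasi-continuous representative of $\sigma$, which coincides $\LL^2$-a.e.\ with $\sigma$ itself). All the other steps are routine measure-theoretic manipulations on the identity $\sigma\LL^2+p=0$ restricted to $B$.
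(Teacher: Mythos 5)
Your proof is correct and follows essentially the same route as the paper: both derive $p=-\sigma\LL^2$ on the set where $u$ is constant and then contradict the pointwise flow rule of Remark \ref{rmk:jump-flow-rule}-(iv) using $|\sigma|=1$ in $\Opl$. Your version merely spells out the Radon--Nikod\'ym derivative step a bit more explicitly.
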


\begin{proof}
Assume by contradiction that $u$ in constant in an open set $U \subset \Opl$. As a consequence, $Du=0$ and $p=-\sigma\LL^2$ in $U$. The flow rule (stated in Remark \ref{rmk:jump-flow-rule}-(iv)) yields in turn that
$$-|\sigma|^2=\sigma\cdot p=|p|=|\sigma| \quad \text{ in } U$$
which is impossible since $|\sigma|=1$ in $U \subset \Opl$. 
\end{proof}

As part of the  proof of Theorem   \ref{thm.uniq}, we will be examining the superlevel sets $\{u>\lambda\}$ of the function $u$ in $\Opl$. It is  known that the reduced boundary of the superlevel sets of solutions to least gradient problem are minimal surfaces (see {\it e.g.} \cite{JMN,Miranda,SWZ}). In our case, we demonstrate a much stronger structure of the level sets of $u$ in the plastic zone: they are characteristic line segments. The following result states a kind of ``monotonicity'' property of $u$ in $\Opl$ across the characteristic line segments. 
 
\begin{proposition}\label{prop:level-set}
Let $\lambda \in \R$ be such that 
\begin{equation}\label{eq:hyp-ls}
0<\LL^2(\{u>\lambda\} \cap \Opl)<\LL^2(\Opl).
\end{equation} Then there exists {an open half-plane} $H_\lambda$ whose boundary is a characteristic line $L_{x_\lambda}$, for some $x_\lambda \in \Opl$, and 
\begin{equation}\label{eq:1426}
\begin{cases}
u> \lambda \quad \LL^2\text{-a.e. in }\Opl \cap H_\lambda,\\
u< \lambda \quad \LL^2\text{-a.e. in }\Opl \setminus {\overline H_\lambda}.
\end{cases}
\end{equation}
In particular, the essential boundary of the sets {$\{u>\lambda\}$ (resp. $\{u\geq \lambda\}$)} in $\Opl$ is precisely $\partial H_\lambda\cap\Opl$. Moreover, $\sigma(x_\lambda)$ is the inner unit normal to $H_\lambda$. 
\end{proposition}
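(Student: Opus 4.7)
Set $E_\lambda:=\{u>\lambda\}\cap\Opl$; by \eqref{eq:hyp-ls} it has positive $\LL^2$-measure strictly less than that of $\Opl$. The approach combines two structural ingredients: (i) the flow rule in $\Opl$ can be rewritten as $Du=\sigma\mu$ with $\mu:=|p|+\LL^2\geq 0$ and $|\sigma|=1$, which encodes a monotonicity of $u$ in the direction $\sigma$; and (ii) by Theorem \ref{thm:struct-sigma-u}, $u$ is constant along each characteristic $L_y\cap\Opl$ for $y\in\Opl\setminus\bigcup_{z\in Z}L_z$. From (ii), up to an $\LL^2$-null set, $E_\lambda$ and its complement in $\Opl$ are each unions of entire characteristic chords of $\Opl$. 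From (i) together with the BV coarea formula $Du=\int_\R D\mathbf 1_{E_\lambda}\,d\lambda$ and $\mu\geq 0$, one deduces $D\mathbf 1_{E_\lambda}=\sigma\,\nu_\lambda$ for some non-negative measure $\nu_\lambda$, so that the measure-theoretic inner unit normal to $E_\lambda$ at every point of the reduced boundary $\partial^* E_\lambda\cap\Opl$ equals $\sigma$. Combined with the chord structure and the local Lipschitz regularity of $\sigma$ inside $\Opl$, the reduced boundary $\partial^* E_\lambda\cap\Opl$ is therefore a (countable) union of entire characteristic chords of $\Opl$.

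It remains to show that this union reduces to a single chord. By Theorem \ref{thm:structOp} (recalled from \cite{BF}), any two distinct characteristics in $\Opl$ are disjoint inside $\Opl$, meeting at most at a fan apex lying on $\partial\Opl$. So two distinct chords $L_1,L_2\subset\partial^*E_\lambda\cap\Opl$ would split $\Opl$ into three open regions $R_-,R_0,R_+$, each itself a union of characteristic chords. The fact that both $L_1$ and $L_2$ are in the essential boundary of $E_\lambda$ forces a ``sandwich'' configuration: either $R_\pm\subset E_\lambda$ and $\LL^2(R_0\cap E_\lambda)=0$, or the reverse. I pick a Cauchy-Lipschitz integral curve $\gamma$ of the locally Lipschitz vector field $\sigma$ starting in one of the outer regions; since $\sigma\cdot\sigma^\perp=0$, such a curve is transversal to every characteristic chord, and by the convexity of $\Opl$ combined with a continuity argument, one can choose $\gamma$ so as to cross $L_1$ and then $L_2$ before exiting $\Opl$. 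BV slicing along $\gamma$ together with $Du\cdot\dot\gamma=(\sigma\cdot\sigma)\mu=\mu\geq 0$ yields that $u\circ\gamma$ is non-decreasing, contradicting either $u>\lambda,\,u<\lambda,\,u>\lambda$ (first sandwich) or $u<\lambda,\,u>\lambda,\,u<\lambda$ (reverse sandwich) along $\gamma$. The reverse sandwich can alternatively be ruled out by Lemma \ref{lem:loc-const} applied to a characteristic strip inside $R_0$. Hence $\partial^*E_\lambda\cap\Opl=L_{x_\lambda}\cap\Opl$ for some $x_\lambda\in\Opl$, the inner normal equals $\sigma(x_\lambda)$, and setting $H_\lambda:=\{x\in\R^2:(x-x_\lambda)\cdot\sigma(x_\lambda)>0\}$ yields \eqref{eq:1426}. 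The identification with the essential boundary of $\{u\geq\lambda\}$ follows because $\{u=\lambda\}\cap\Opl$ has zero $\LL^2$-measure: otherwise Lemma \ref{lem:loc-const} would be contradicted on the corresponding characteristic strip.

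The main technical obstacle is making the monotonicity-along-$\gamma$ argument rigorous at the level of BV regularity: one must build a Cauchy-Lipschitz flow line of $\sigma$ that stays inside $\Opl$ long enough to cross both chords, and justify a BV chain rule $D(u\circ\gamma)=\bigl(\sigma(\gamma)\cdot\dot\gamma\bigr)d\mu_\gamma\geq 0$, where $\mu_\gamma$ is the appropriate $1$-dimensional slice of $\mu$. This calls for a Fubini-type slicing along the Lipschitz flow of $\sigma$, exploiting $\text{div}\,\sigma=0$ and $|\sigma|=1$ to control the Jacobian of the flow map, and probably requires treating fan subregions (where flow lines of $\sigma$ are circular arcs around the apex and the monotonicity reduces to angular monotonicity in polar coordinates) separately from the complementary convex pieces provided by the structure theorem of \cite{BF}.
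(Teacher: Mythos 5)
Your proposal takes a genuinely different route from the paper, but it has two gaps that are not merely technical. The first concerns the step where you extract $D\mathbf 1_{E_\lambda}=\sigma\,\nu_\lambda$ with $\nu_\lambda\geq 0$ from the coarea formula: the coarea formula is an integral identity in $\lambda$, so the identification of the measure-theoretic normal of $\{u>\lambda\}$ with the polar direction $\sigma=\frac{dDu}{d|Du|}$ only holds for $\LL^1$-a.e.\ $\lambda$, whereas the Proposition must hold for \emph{every} $\lambda$ satisfying \eqref{eq:hyp-ls}. Your plan has no mechanism for the exceptional values. The paper handles this with a two-case analysis on the precise representative $u^*$: if $\lambda$ is attained by $u^*$ on the good set $\O_u$, a direct argument applies; if not, one introduces $\lambda^+:=\inf\{s>\lambda:\ s\in u^*(\O_u)\}$, approximates by attained values $s_n\searrow\lambda^+$, and passes to the limit in the corresponding characteristic chords, using precisely hypothesis \eqref{eq:hyp-ls} to guarantee that the limiting segment $[a,b]$ is not absorbed into $\partial\Opl$. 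Some such limiting argument is indispensable.

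The second gap is the one you flag yourself as the "main technical obstacle": the monotonicity of $u$ along curved Cauchy--Lipschitz flow lines of $\sigma$, i.e.\ a BV chain rule $D(u\circ\gamma)=(\sigma(\gamma)\cdot\dot\gamma)\,d\mu_\gamma\ge 0$. This is not a detail to be deferred — it is the entire content of the uniqueness-of-the-chord step, and slicing a BV function along the curved integral curves of a merely locally Lipschitz field (with no global control of the flow's Jacobian near $\partial\Opl$ or near fan apexes) is delicate. The paper avoids it altogether by a simple but decisive observation: fixing a characteristic $L$ with normal $\xi_0=\sigma(x_0)$, every direction $\xi$ with $\xi\cdot\xi_0>0$ satisfies $\sigma\cdot\xi>0$ throughout $\Opl$, because $\sigma(y+t\xi)\cdot\xi=0$ would force the characteristic $L_{y+t\xi}$ to pass through $y\in L\cap\Opl$, contradicting that characteristics cannot intersect inside $\Opl$. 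One may then slice along \emph{straight} lines in direction $\xi$ — where BV slicing is standard (\cite[Theorem 3.107]{AFP}) — and obtain the strict monotonicity $Du_y^\xi\ge \sigma_y^\xi\cdot\xi\,\LL^1>0$. This single device replaces your curved-flow-line machinery, your sandwich argument, and also delivers the strict inequalities in \eqref{eq:1426} directly (whence $\LL^2(\{u=\lambda\}\cap\Opl)=0$ without a separate appeal to Lemma \ref{lem:loc-const}). I would encourage you to rebuild the proof around that observation.
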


Note that, in view of Lemma \ref{lem:loc-const},  a $\lambda$ that satisfies \eqref{eq:hyp-ls} always exists.

\begin{proof}
We denote by $u^*$ the precise representative of $u \in BV(\Opl)$  defined outside a set $Z_u \subset \Opl$ with $\HH^1(Z_u)=0$ (see {\it e.g.} \cite[Corollary 3.80]{AFP}). We can assume without loss of generality that $Z_u$ contains the exceptional set $Z$ introduced in the statement of Theorem \ref{thm:struct-sigma-u}. Further, setting $N_u:=\bigcup_{z \in Z_u} (\Opl \cap L_z)$ and $\O_u:=\Opl \setminus N_u$, we can also assume that $\LL^2(N_u)=0$. 

Indeed, this is true of $Z$. As far as $Z_u$ is concerned, by \cite[Proposition 5.7]{BF}, for all $x_0 \in \Opl$, there exists an open neighborhood $U$ of $x_0$ contained in $\Opl$, a square $Q=(-r,r)^2$ and a bi-Lipschitz mapping $\Phi:Q \to U$ with the property that $\Phi^{-1}$ maps the characteristic line segments in $U$ into a family of vertical parallel lines. Specifically, for all $x \in Q$, there exists a unique $t \in (-r,r)$ such that
$$\Phi^{-1}(L_x \cap U)=\{t\} \times (-r,r).$$
Setting $P(t,s):=t$,  the set $\hat Z_u:=P(\Phi^{-1}(Z_u \cap U))$ is $\HH^1$-negligible because $\Phi^{-1}$ and $P$ are Lipschitz. Since $\hat Z_u \subset (-r,r)$, this reads as $\LL^1(\hat Z_u)=0$. Thus,
$$\Phi^{-1}\left( \bigcup_{z \in Z_u} (L_z \cap  U)\right)=\bigcup_{t \in \hat Z_u} \{t\} \times (-r,r)=\hat Z_u \times (-r,r).$$
By Fubini's Theorem, $\LL^2(\hat Z_u \times (-r,r))=0$ and because $\Phi$ itself is Lipschitz,  
$$\LL^2\left(\bigcup_{z \in Z_u} (L_z \cap U)\right)=0.$$
The desired result is obtained by moving the point $x_0$.

\medskip

Let $L$ be a characteristic line, {$x_0 \in L$, and $\xi_0:=\sigma(x_0)$} be the constant value of $\sigma$ on $L \cap \Opl$. {For all $\xi \in \mathbb S^1$ with $\xi_0 \cdot \xi>0$, we} define
$$(\Opl)_y^\xi=\{t \in \R : \; y+t\xi \in \Opl\},$$
which is an open interval by convexity of $\Opl$. For $\h$-a.e. $y \in  L \cap \Opl$, we further set
$$u_y^\xi(t):=u(y+t\xi) \quad \text{ for $\LL^1$-a.e. $t \in (\Opl)_y^\xi$.}$$
By \cite[Theorem 3.107]{AFP}, $u_y^\xi \in BV((\Opl)_y^\xi)$ for $\HH^1$-a.e. $y \in L \cap \Opl$ and the following disintegration result holds:
\begin{equation}\label{eq:slicing}
Du \cdot \xi\res\O^\xi_L=\HH^1\res (L \cap \Opl) \otimes Du_y^\xi
\end{equation}
where $\O^\xi_L:=\{x\in\O: \exists y\in L \cap \Opl \mbox{ s.t. } x\in (\Opl)_y^\xi\}$.
Note that, in that reference, only sections of $u$ in a direction orthogonal to $L$ are considered. However, an inspection of the proof shows that the argument leading to \eqref{eq:slicing} (which in essence relies  on Fubini's Theorem) remains unchanged for non-orthogonal directions. Further, there exist a finite number $p$  of $\xi_i$'s with  $\xi_0 \cdot \xi>0$ such that 
\begin{equation}\label{eq:333}
\O=\cup_{i=1}^p\O^{\xi_i}_L.
\end{equation}

We claim that $\sigma\cdot\xi >0$ in $\Opl$. Let $y \in L \cap \Opl$, since $\sigma$ is constant along $L$ and equal to {$\xi_0$, $\sigma(y)\cdot\xi=\xi_0 \cdot \xi>0$}. If, for some $t \in (\Opl)_y^\xi$, $\sigma(y+t\xi)\cdot \xi=0$, the characteristic line $L_{y+t\xi}$ is   parallel to $\xi$ and passes through $y \in \Opl$ which is impossible since line segments cannot intersect inside $\Opl$ (see \cite[Proposition 5.5]{BF}). The continuity of $\sigma$ in $\Opl$ implies that $\sigma(y+t\xi)\cdot \xi > 0$ for all $t \in (\Opl)_y^\xi$. Any point ${x \in \O}$ can be written as  $x=y+t\xi$ for some {$y \in L \cap \Opl$, $\xi \in \mathbb S^1$ with $\xi \cdot \xi_0>0$ and $t \in (\Opl)_y^\xi$, hence $\sigma(x)\cdot \xi>0$ for all $x \in \O$}. 

Since, by the flow rule (see Remark \ref{rmk:jump-flow-rule}-(iv)), $Du=\sigma \mu$ where $\mu=\LL^2+|p|$, $Du\cdot \xi= \sigma\cdot\xi\ \mu$.  Because $|p|$ is a non negative measure,  $\sigma\cdot\xi\ \LL^2 \leq Du\cdot\xi$ and Fubini's Theorem together with \eqref{eq:slicing} ensures that for $\h$-a.e. $y \in  L \cap \Opl$, 
\begin{equation}\label{eq:1531}
Du_y^\xi\cdot  \xi \ge \sigma_y^\xi \cdot \xi \LL^1>0 \quad \text{ in }(\Opl)_y^\xi,
\end{equation}
where $\sigma_y^\xi(t):=\sigma(y+t\xi)$. 

\medskip

{\sf Case 1.} Assume first that $\lambda \in u^*(\O_u)$ so that there exists $x_\lambda \in \O_u \subset \Opl$ with $u^*(x_\lambda)=\lambda$. By Theorem \ref{thm:struct-sigma-u} and since $Z_u \supset Z$, we know that $u^*$ is constant along $L_{x_\lambda} \cap \Opl$, hence $u^*(x)=\lambda$ for all $x \in L_{x_\lambda} \cap \Opl$. Let $H_\lambda$ be the open half-plane such that $\partial H_\lambda=L_{x_\lambda}$ and containing $\sigma(x_\lambda)$. Set $\xi_0:=\sigma(x_\lambda) \in \mathbb S^1$ and $L:=L_{x_\lambda}$. According to \eqref{eq:1531}, for $\HH^1$-a.e. $y \in L \cap \Opl$ and all $\xi$'s with $\xi_0\cdot\xi>0$, 
$$u_y^\xi < \lambda \quad \LL^1\text{-a.e. on } (\Opl)_y^\xi \cap \R^*_-, \quad u_y^\xi > \lambda \quad \LL^1\text{-a.e. on } (\Opl)_y^\xi \cap \R^*_+$$
so that, by \eqref{eq:333}, \eqref{eq:1426} holds. Note that this first case does not use hypothesis \eqref{eq:hyp-ls}.

\medskip

{\sf Case 2.} Next if $\lambda \not\in u^*(\O_u)$, we define
$$\lambda^+:=\inf\{s>\lambda : \; s \in u^*(\O_u)\} \geq \lambda.$$
Note that if $\lambda^+=\infty$, then $\LL^2(\{u > \lambda\} \cap \Opl)=0$ so that we can assume that $\lambda^+<\infty$. We claim that ${\{u\geq \lambda\}} \cap \Opl=\{u\geq \lambda^+\} \cap \Opl$ up to an $\LL^2$-negligible set. This is obvious if $\lambda^+=\lambda$. If however, $\lambda^+>\lambda$, by definition of $\lambda^+$,  $u^*(\O_u) \cap [\lambda,\lambda^+)=\emptyset$, hence (up to a set of zero $\LL^2$-measure)
$${\{u\geq \lambda\} }\cap \Opl={\{u^*\geq \lambda\} }\cap \O_u =\{u^*\geq \lambda^+\} \cap \O_u=\{u\geq \lambda^+\} \cap \Opl.$$
By definition of the infimum, a decreasing sequence $\{s_n\}_{n \in \N}$ in $u^*(\O_u)$ is  such that $s_n>\lambda$, with $s_n \searrow \lambda^+$. According to Case 1, there exist points $x_n \in \Opl$, characteristic lines $L_n=L_{x_n}$ and open half-spaces $H_n$ satisfying
$$
\begin{cases}
\partial H_n=L_n;\;
H_n\ni\sigma(x_n);\\
u < s_n\ \LL^2\mbox{ -a.e. in }\Opl \setminus {\overline H_n};\;
u > s_n\ \LL^2\mbox{ -a.e. in }\Opl \cap H_n.
\end{cases}
$$

Let  $[a_n,b_n]:=L_n \cap \overline \O_{pl}$. Up to a subsequence, we can suppose that $a_n \to a$, $b_n \to b$ for some $a$, $b \in \partial\Opl$ and the closed line segment $[a_n,b_n]$ converges in the sense of Hausdorff to a closed line segment $[a,b]$. If $[a,b] \subset \partial\Opl$, either $\LL^2(\{u \leq \lambda^+\} \cap \Opl)=0$, or $\LL^2(\{u \geq \lambda^+\} \cap \Opl)=0$. This implies that $\LL^2(\{u \leq \lambda\} \cap \Opl)=0$ or $\LL^2(\{u>\lambda\} \cap \Opl)=0$ which is against \eqref{eq:hyp-ls}. As a consequence $[a,b]$ is not contained in $\partial\Opl$ and by convexity of $\Opl$, $]a,b[ \; \subset \Opl$. 

By continuity of $\sigma$ in $\Opl$,  $]a,b[$ must be a characteristic line segment $L_{x_\lambda} \cap \Opl$ orthogonal to  $\sigma(x_\lambda)$, where $x_\lambda$ is any arbitrary point in $]a,b[$. Let $H_\lambda$ be the {open} half-space containing $\sigma(x_\lambda)$;  $u \leq \lambda^+$ $\LL^2$-a.e. in $\Opl \setminus \ol H_\lambda$ and $u \geq \lambda^+$ $\LL^2$-a.e. in $\Opl \cap H_\lambda$. Recalling that $u^*(\O_u) \cap [\lambda,\lambda^+)=\emptyset$, and that $\LL^2(\{u \neq u^*\})=\LL^2(\Opl \setminus \O_u)=0$, we conclude that  $u \leq \lambda$ $\LL^2$-a.e. in $\Opl \setminus {\overline H_\lambda}$ and $u \geq \lambda$ $\LL^2$-a.e. in $\Opl \cap H_\lambda$.  Using \eqref{eq:1531} with $\xi=\sigma(x_\lambda)$ and $L=L_{x_\lambda}$, we infer that \eqref{eq:1426} holds.
\end{proof}

We  recall one of the main {results} of \cite{BF} (see \cite[Theorem 1.3]{BF}). To this aim, we need to introduce some notation. Given two vectors $v_1$ and $v_2 \in \R^2$, we denote by 
$C(v_1,v_2):=\{\alpha v_1+\beta v_2 :\; \alpha>0, \, \beta>0\}$ the half open cone  generated by $v_1$ and $v_2$. {A boundary fan with apex $\hat z \in \partial\Opl \cap \partial\O$} is an open subset of $\Opl$ of the form
\begin{equation}\label{eq:fan}
\mathbf F_{\hat z}=\Opl \cap (\hat z + C(v_1,v_2)).
\end{equation}

\begin{theorem}\label{thm:structOp}
The set $\Opl$ can be written as the following pairwise disjoint union
\begin{equation}\label{eq:omegapl}
\Opl=\bigcup_{i \in I} \mathbf F_i \cup \bigcup_{\lambda \in \Lambda}(L_{x_\lambda} \cap \Opl) \cup \bigcup_{j \in J} \mathbf C_j,
\end{equation}
for some (possibly) uncountable set $\Lambda$ and at most countable sets $I$, $J$, where 
\begin{itemize}
\item {$\{L_{x_\lambda}\}_{\lambda \in \Lambda}$ is a family of pairwise disjoint characteristic lines} passing though $x_\lambda \in \Opl$;
\item {$\{\mathbf F_{\hat z_i}\}_{i \in I}$ is a family of pairwise disjoint} open boundary fans with apex $\hat z_i \in \partial\Opl \cap \partial \O$;
\item {$\{\mathbf C_j\}_{j \in J}$ is a family of pairwise disjoint convex sets}, closed in the relative topology of $\Opl$ and with non empty interior.
\end{itemize}
Moreover, denoting by 
$$\mathscr F:=\bigcup_{i \in I} \mathbf F_i,$$
then $\{L_{x_\lambda} \cap \Opl\}_{\lambda \in \Lambda}$ (resp. $\{ \mathbf C_j\}_{j \in \N}$) are the connected components of
\begin{equation}\label{def:C}
\CC:=\Opl \setminus \mathscr F
\end{equation}
with empty (resp. nonempty) interior.
\end{theorem}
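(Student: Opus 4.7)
The plan is to exploit three structural features already in place: the local Lipschitz regularity of $\sigma$ on $\Opl$ (Theorem \ref{thm:struct-sigma-u}), the fact that $\sigma$ is divergence-free and unit-normed on $\Opl$ so that distinct characteristic lines cannot meet inside $\Opl$ (Proposition 5.5 of \cite{BF}, already invoked above), and Hypothesis \textbf{(H)} asserting the convexity of $\Opl$. Thanks to these, for each $y\in\Opl$ the segment $[a_y,b_y]:=\overline{L_y\cap\Opl}$ is well defined, with $a_y,b_y\in\pa\Opl$, and distinct segments of this form have disjoint interiors.

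First I would build the fans. For each $\hat z\in\pa\Opl$, consider the set $\Theta_{\hat z}$ of unit directions of those segments $[a_y,b_y]$ that admit $\hat z$ as one of their endpoints. By continuity of $\sigma$ inside $\Opl$ together with the non-crossing property, if $\Theta_{\hat z}$ contains two distinct directions $v_1,v_2$, then every direction between $v_1$ and $v_2$ also belongs to $\Theta_{\hat z}$, and the union of the corresponding characteristic segments fills the set $\Opl\cap(\hat z+C(v_1,v_2))$, i.e.\ the boundary fan $\mathbf F_{\hat z}$ of \eqref{eq:fan}. Using that $|\sigma|$ extends continuously to $\Sigma$ with trace $1$ (cf.\ Remark \ref{rem:cont-|sigma|-Sigma}) while inside a fan the direction of $\sigma$ rotates, one excludes apex points on $\Sigma=\pa\Opl\cap\O$; hence $\hat z\in\pa\Opl\cap\pa\O$, as required by the definition of a boundary fan.

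Second I would argue countability of $I$. Each $\mathbf F_i$ has strictly positive opening angle by construction, and its apex lies on $\pa\O$, not in $\Opl$, so $\LL^2(\mathbf F_i)>0$; since the fans are pairwise disjoint subsets of the set $\Opl$ of finite area, $I$ is at most countable. Setting $\mathscr F:=\bigcup_{i\in I}\mathbf F_i$, the residual set $\CC:=\Opl\setminus \mathscr F$ is relatively closed in $\Opl$, and the non-crossing property together with the maximality of the fan construction ensures that every characteristic segment $L_y\cap\Opl$ with $y\in\CC$ is entirely contained in $\CC$ (else it would cross a fan).

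Third I would decompose $\CC$ into its connected components $K$. Each such $K$ is a union of characteristic segments. If $K$ has empty interior, then by non-crossing and continuity of $\sigma$ it cannot contain two non-parallel segments, so $K$ reduces to a single segment $L_{x_\lambda}\cap\Opl$; this yields the family indexed by $\Lambda$. If $K$ has nonempty interior, I would show it is convex as follows: given two interior points $p,q\in K$, the characteristic segments through points of $[p,q]$ depend continuously on the base point (Lipschitz continuity of $\sigma$) and remain inside $\CC$ by the previous observation; since these segments do not cross, a planar-sweep argument shows $[p,q]\subset K$, giving convexity. Finally, each $\mathbf C_j$ has nonempty interior, so $\LL^2(\mathbf C_j)>0$, and the $\mathbf C_j$'s being pairwise disjoint forces $J$ to be at most countable.

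The main obstacle I anticipate is the rigorous exclusion, in step one, of fan apices on $\Sigma$, and the convexity argument for the $\mathbf C_j$'s in step three: both rest on the fine entropy-method analysis of divergence-free unit vector fields developed in \cite{DKMO,JOP} and adapted in \cite{BF}, which alone guarantees the required rigidity beyond what mere Lipschitz regularity of $\sigma$ provides.
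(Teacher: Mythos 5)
First, a point of context: the paper does not prove this statement — Theorem \ref{thm:structOp} is recalled verbatim from \cite[Theorem 1.3]{BF}, so there is no in-paper argument to compare against beyond that citation. Your architecture (build maximal boundary fans, show the residual set $\CC$ is foliated by characteristic segments, classify its connected components by whether they have interior) is the right one, and several steps are sound: countability of $I$ and $J$ from positive area, and the fact that a characteristic segment through a point of $\CC$ cannot enter an open fan without crossing one of its bounding segments. Your fan-filling claim can also be made rigorous along the lines you indicate: for $w$ on the segment joining the midpoints of the two concurrent characteristics, the endpoints of $L_w\cap\overline\O_{pl}$ depend continuously on $w$ and are trapped in $\partial\Opl\cap(\hat z+\overline{C(v_1,v_2)})$, in which $\hat z$ is an isolated point; connectedness then forces one endpoint to remain at $\hat z$, and the swept rays fill the cone. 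But this needs to be written out — it does not follow from "continuity plus non-crossing" without the isolation/connectedness step.

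There are two genuine gaps. First, your exclusion of apexes on $\Sigma$ rests on the wrong mechanism: the vortex field has $|\sigma|\equiv 1$, so the continuity of the trace of $|\sigma|$ on $\Sigma$ (Remark \ref{rem:cont-|sigma|-Sigma}) yields no contradiction, and a single-point discontinuity of the direction is perfectly compatible with $H^1$ in two dimensions. The correct obstruction is quantitative: on a fan of positive opening one has $|\nabla\sigma(x)|\sim|x-\hat z|^{-1}$, so $\int_{\mathbf F\cap B_\rho(\hat z)}|\nabla\sigma|^2\,dx=+\infty$ whenever $\hat z\in\O$, contradicting \eqref{eq.reg-sigma}; this is what forces $\hat z\in\partial\O$. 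Second, your convexity argument for the $\mathbf C_j$ is circular: you invoke "characteristic segments through points of $\CC$ stay in $\CC$" for the points of $[p,q]$ before knowing that $[p,q]\subset\CC$. A correct route is to observe that an open boundary fan $\mathbf F$, whose closure meets $\partial\Opl$ at the apex and along the far arc, is a cross-cut of the convex set $\Opl$: $\Opl\setminus\mathbf F$ splits into two relatively clopen pieces, each containing one bounding segment. A connected component $K$ of $\CC$ therefore lies entirely on one side of every fan, so for $p,q\in K$ the segment $[p,q]$ (which, if it met a fan, would have its two exit points on the two distinct bounding segments, placing $p$ and $q$ on opposite sides) avoids $\mathscr F$; hence $[p,q]\subset\CC$ and, being connected and meeting $K$, lies in $K$. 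With these two repairs, and the bi-Lipschitz straightening of \cite[Proposition 5.7]{BF} to show that an empty-interior component cannot contain two distinct segments, your proof closes.
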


We also recall the following
\begin{definition}\label{def.dcopl}{\rm
A point $x \in \partial \Opl$ is a {\it characteristic boundary point} if $x \not\in L_z$ for all $z\in \Opl$. 

We denote by $\partial^c \Opl$ the set of all characteristic boundary points.}
\end{definition}

The results of \cite[Theorem 6.11, Remark 6.12]{BF} provides a precise structure of the connected components of $\mathscr C$.

\begin{theorem}\label{prop:structOp2}
Let $\mathbf C$ be a connected component of $\mathscr C$ with nonempty interior. If $\mathbf C \neq \Opl$, then
\begin{itemize}
\item[(i)] Either $\partial \mathbf C=L \cup \Gamma$ with $L$ an open characteristic line segment and $\Gamma$  a connected closed set in $\partial\Opl$. In that case, $\Gamma=\Gamma_1 \cup \Gamma_2 \cup S$ where $\Gamma_1$ and $\Gamma_2$ are connected and $S=\partial \mathbf C \cap \partial^c \Opl=:\partial^c \mathbf C$ is a closed line segment (possibly reduced to a single point) that separates $\Gamma_1$ and $\Gamma_2$. Further, each point of $\Gamma_1$ (resp. $\Gamma_2$) is traversed by a characteristic line segment which will re-intersect $\partial\Opl$ on $\Gamma_2$ (resp. $\Gamma_1$).
\item[(ii)] Or $\partial \mathbf C=L \cup L' \cup \Gamma \cup \Gamma'$ where $L$ and $L'$ are open characteristic line segments, while $\Gamma$ and $\Gamma'$ are two disjoint connected closed sets in $\partial\Opl$. Further each point of $\Gamma$ (resp. $\Gamma'$) is traversed by a characteristic line segment which will re-intersect $\partial\Opl$ on $\Gamma'$ (resp. $\Gamma$). In that case, $\partial \mathbf C \cap \partial^c\Opl=\emptyset$.
\end{itemize}

If however $\mathbf C=\Opl$, then $\partial \Opl= \Gamma_1\cup\Gamma_2\cup S\cup S'$  where $\Gamma_1$ and $\Gamma_2$ are connected and $S$, $S'$ are the (only) connected components of $\partial^c \Opl$. They are disjoint closed line segments (possibly reduced to a single point). Further, each point of $\Gamma_1$ (resp. $\Gamma_2$) is traversed by a characteristic line segment which will re-intersect $\partial\Opl$ on $\Gamma_2$ (resp. $\Gamma_1$).
\end{theorem}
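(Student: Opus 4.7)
The plan is to combine the decomposition \eqref{eq:omegapl} with the non-crossing of characteristic lines inside $\Opl$ (see \cite[Proposition 5.5]{BF}) and the convexity of $\Opl$ granted by Hypothesis \textbf{(H)}. Since $\mathscr F$ is open as a countable union of open fans, $\mathscr C=\Opl\setminus\mathscr F$ is relatively closed in $\Opl$, so a connected component $\mathbf C$ with nonempty interior is closed in $\Opl$. The boundary $\partial\mathbf C\cap\Opl$ is then made of accumulation points of the fans adjacent to $\mathbf C$; because each fan is bounded by two closed characteristic half-segments (its edges), $\partial\mathbf C\cap\Opl$ is itself a union of characteristic line segments. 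The non-crossing property and the convex geometry of $\Opl$ prevent more than two such internal edges, yielding the dichotomy between cases (i) and (ii).

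To pin down $\partial\mathbf C\cap\partial\Opl$, I would observe that for every boundary point $x$ not belonging to $\partial^c\Opl$, Definition \ref{def.dcopl} furnishes some $z\in\Opl$ with $x\in\overline{L_z}$; non-crossing together with convexity of $\Opl$ force $L_z\cap\overline\Opl$ to meet $\partial\Opl$ at exactly one other point $x'\in\partial\mathbf C\cap\partial\Opl$, yielding an ``antipodal'' pairing $x\mapsto x'$. The local Lipschitz regularity of $\sigma$ in $\Opl$ (Theorem \ref{thm:struct-sigma-u}) propagates to continuity of this pairing, and its monotonicity along $\partial\Opl$, inherited from non-crossing, implies that the degeneracy set $\partial\mathbf C\cap\partial^c\Opl$, i.e., the locus where the pairing collapses, is a single closed segment, possibly reduced to a point. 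In case (i), where $\partial\mathbf C$ contains only one internal characteristic edge $L$, this produces $S:=\partial\mathbf C\cap\partial^c\Opl$, which separates the connected closed set $\Gamma:=\partial\mathbf C\cap\partial\Opl$ into the two arcs $\Gamma_1,\Gamma_2$ matched bijectively by the pairing.

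In case (ii), where $\mathbf C$ admits two internal characteristic edges $L$ and $L'$, their endpoints on $\partial\Opl$ already split $\partial\mathbf C\cap\partial\Opl$ into two disjoint connected arcs $\Gamma$ and $\Gamma'$; the antipodal pairing then carries each point of $\Gamma$ to a point of $\Gamma'$ without any degeneracy, so that $\partial\mathbf C\cap\partial^c\Opl=\emptyset$. The situation $\mathbf C=\Opl$ is analogous, except that the absence of internal edges leaves room for the pairing, applied around the \emph{entire} boundary $\partial\Opl$, to degenerate at two different places, producing the two disjoint closed segments $S,S'$ separating the two connected arcs $\Gamma_1,\Gamma_2$.

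The main obstacle in making this program rigorous is to prove that each degeneracy locus is a single closed line segment rather than a more intricate closed subset of $\partial\Opl$; this demands a careful continuity and monotonicity analysis of the antipodal pairing, relying in an essential way on convexity of $\Opl$ and on the $H^1_{\rm loc}$-regularity of $\sigma$. As precisely this structural statement is established in \cite[Theorem 6.11 and Remark 6.12]{BF}, the proof here would ultimately reduce to a direct appeal to those results.
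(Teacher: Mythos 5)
Your proposal ultimately rests on a direct appeal to \cite[Theorem 6.11, Remark 6.12]{BF}, which is exactly what the paper does: the theorem is stated as a recalled result from the previous work with no proof given here. The heuristic sketch you provide (non-crossing of characteristics, convexity of $\Opl$, and the antipodal pairing) is a reasonable outline of the mechanism behind the cited results, but since both you and the paper reduce the matter to that citation, the approaches coincide.
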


\begin{remark}\label{rem:bdryfancc}
Let $L$ be a characteristic line segment such that $L \cap \overline\O_{pl} \subset \partial\mathbf C$ and $H$ be the closed half-plane with $\partial H\supset L$ that contains $\mathbf C$. For each $\e>0$, we set $H_\e:=\{x \in \R^2 : \; {\rm dist}(x,H) \leq \e\}$. Then 
$$\Opl \cap (H_\e \setminus H) \cap \mathscr F\neq \emptyset,$$
otherwise the connected set {$[\mathbf C \cup (\Opl \cap (H_\e \setminus H)]$} would be disjoint from $\mathscr F$ and strictly contain $\mathbf C$ which would contradict that $\mathbf C$ is a connected component of $\mathscr C$. In other words, the characteristic line segment $L$ is either the boundary of a fan, or the accumulation point for the Hausdorff distance of boundaries of fans. In particular, denoting by $L \cap \overline\O_{pl}=[a,b]$ for some $a$, $b \in \R^2$, then (up to exchanging $a$ and $b$), there exists a sequence $\{z_n\}_{n \in \N}$ of apexes of boundary fans such that $z_n \to a$.

A similar argument would show that, for all $\lambda \in \Lambda$, $L_{x_\lambda} \cap \Opl$ is either the boundary of a fan, or the accumulation point for the Hausdorff distance of boundaries of fans. 
\hfill\P
\end{remark}

\begin{lemma}\label{lem:f_cont}
Let $\mathbf C$ be a connected component of $\mathscr C$ with non-empty interior such that $\partial^c \mathbf C \neq \emptyset$ (case (i) of Theorem \ref{prop:structOp2}). Let $\Gamma_1$ and $\Gamma_2$ be the connected components of $\partial\mathbf C$ defined in that Theorem. Set,  for any $x\in\Gamma_1$, $f(x)$ as the unique intersection point of $L_x$ with $\Gamma_2$. Then $f$ is a homeomorphism from $\Gamma_1$ to $\Gamma_2$.
\end{lemma}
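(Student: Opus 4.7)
The plan is to show in turn that $f$ is well defined, bijective, and monotone with respect to an arclength-type parameterization of $\partial\Opl$, and then to conclude that monotonicity plus bijectivity force $f$ to be a homeomorphism.

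For well-definedness, fix $x\in\Gamma_1\subset\partial\Opl$. By Theorem \ref{prop:structOp2}(i) the characteristic line $L_x$ through $x$ meets $\Opl$, so $L_x\cap\overline\Opl$ is a (non-degenerate) chord of the bounded convex open set $\Opl$; its two endpoints lie on $\partial\Opl$. One endpoint is $x$; the other, again by Theorem \ref{prop:structOp2}(i), belongs to $\Gamma_2$ and is precisely $f(x)$. Surjectivity of $f$ onto $\Gamma_2$ is the symmetric statement of Theorem \ref{prop:structOp2}(i) applied at points of $\Gamma_2$.

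For injectivity, suppose $f(x_1)=f(x_2)=y$ with $x_1\ne x_2$ in $\Gamma_1$. Consider any $x_3$ lying strictly between $x_1$ and $x_2$ on the arc of $\Gamma_1$, and let $R$ denote the closed region of $\overline\Opl$ bounded by the chords $L_{x_1}$, $L_{x_2}$ and the sub-arc of $\partial\Opl$ from $x_1$ to $x_2$ through $x_3$. By the non-crossing property of characteristic lines in $\Opl$ (\cite[Proposition 5.5]{BF}) the chord $L_{x_3}$ must lie entirely in $R$, so its second endpoint $f(x_3)\in\Gamma_2$ must lie in $R\cap\Gamma_2=\{y\}$. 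Hence $f\equiv y$ on the whole arc from $x_1$ to $x_2$, producing an uncountable pencil of distinct characteristics in $\mathbf C$ with common endpoint $y$. If $y\in\partial\Opl\cap\partial\O$ this pencil contains a boundary fan with apex $y$ contained in $\mathbf C$, contradicting $\mathbf C\subset\mathscr C=\Opl\setminus\mathscr F$. If instead $y\in\Sigma$, one uses the fact that along each such characteristic $\sigma$ takes a distinct constant value with $|\sigma|=1$, whereas the $H^1_{\rm loc}$ trace of $\sigma$ on the Lipschitz curve $\Sigma$ at $y$ is single-valued — the inconsistency of these two facts gives the contradiction.

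Finally, to obtain continuity, parameterize the Jordan curve $\partial\Opl$ by a continuous injection $c$ from an interval, so that $\Gamma_1$ and $\Gamma_2$ pull back to disjoint sub-intervals $I_1$ and $I_2$. The elementary fact that two chords of a planar convex region fail to cross in the interior if and only if their endpoints are non-interleaved on the boundary, combined once more with the non-crossing of characteristic chords, shows that the map $\tilde f:=c^{-1}\circ f\circ c:I_1\to I_2$ is monotone. A monotone bijection between two real intervals is automatically a homeomorphism, and transporting back through $c$ yields that $f:\Gamma_1\to\Gamma_2$ is a homeomorphism. The delicate step in this proof plan is the injectivity argument, and specifically the treatment of the case $y\in\Sigma$, where the absence of a genuine ``fan'' object in \cite{BF}'s classification forces one to rule out a fan-like concentration of characteristics at an interior interface point by a direct $\sigma$-trace argument.
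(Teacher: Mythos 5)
Your proof is correct in substance but takes a genuinely different route from the paper's on the main point. The paper reads well-definedness and injectivity directly off Theorem \ref{prop:structOp2}-(i) and then proves continuity by a sequential compactness argument: writing $f(x_n)=x_n+\theta_n\sigma^\perp(x_n)$, extracting a convergent subsequence of $\{\theta_n\}_{n\in\N}$, and invoking the continuity of $\sigma$ on $\overline{\mathbf C}\setminus\partial^c\mathbf C$ (\cite[Theorem 6.22]{BF}) to identify the limit as the point of $L_x\cap\Gamma_2$, i.e. $f(x)$; continuity of $f^{-1}$ is symmetric. You instead argue purely topologically: non-crossing of characteristic chords in $\Opl$ forces non-interleaving of their endpoints on the Jordan curve $\partial\Opl$, hence monotonicity of $f$ in an arclength parameterization, and a monotone bijection between two intervals is automatically a homeomorphism. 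This is valid and arguably more elementary, since it bypasses the boundary regularity of $\sigma$ altogether; the price is that it genuinely needs surjectivity of $f$ (a monotone \emph{injection} with gaps in its image need not be continuous), which you correctly obtain from the symmetric statement of Theorem \ref{prop:structOp2}-(i) applied on $\Gamma_2$.

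One soft spot: in the injectivity step, the sub-case $y\in\Sigma$ is dispatched by asserting that the $H^1_{\rm loc}$ trace of $\sigma$ on $\Sigma$ ``at $y$'' is single-valued. Traces of $H^1$ functions on a Lipschitz curve are only defined $\HH^1$-a.e., so this statement has no meaning at an individual point. The correct obstruction -- the one used throughout the paper and in \cite{BF} -- is that a pencil of characteristics with distinct directions and common endpoint $y$ sweeps out an open fan of positive $\LL^2$-measure on which $\sigma=\pm(x-y)^\perp/|x-y|$, whose gradient fails to be square-integrable near $y$; since $y\in\O$, this contradicts $\sigma\in H^1_{\rm loc}(\O;\R^2)$. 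Equivalently, $y$ would be the apex of a boundary fan, and apexes lie on $\partial\O\cap\partial\Opl$. With that repair, your injectivity argument (which is in any case more elaborate than needed, since two distinct characteristics meeting at any point of $\partial\Opl$ already produce a fan, excluded both because its apex would lie in $\O$ and because $\mathbf C\subset\mathscr C$ contains no fan) goes through.
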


\begin{proof}
According to Theorem \ref{prop:structOp2}-(i), the mapping $f$ is well defined and one to one. It is enough to check that $f$ is continuous on $\Gamma_1$ since a similar argument will lead to the continuity of $f^{-1}$ on $\Gamma_2$.

Let $x \in \Gamma_1$ and  $\{x_n\}_{n \in \N}$ be a sequence in $\Gamma_1$ be such that $x_n \to x$. For each $n \in \N$, there exists $\theta_n \in \R$ such that $f(x_n)=x_n+\theta_n \sigma^\perp(x_n)$. The sequence $\{\theta_n\}_{n \in \N}$ being bounded, it converges, up to a subsequence, to some limit $\theta \in \R$. Moreover, by continuity of $\sigma$ in $\overline{\mathbf C} \setminus \partial^c\mathbf C$ (see \cite[Theorem 6.22]{BF}), 
$f(x_n) \to x +\theta \sigma^\perp(x)=:y\in L_x \cap \overline \Gamma_2$. Since by Theorem  \ref{prop:structOp2}-(i) $L_x$ intersects $\Gamma_1$ and $\Gamma_2$ it follows that, actually, $y \in L_x \cap \Gamma_2$, which proves that $y=f(x)$.
\end{proof}

We next show that $\sigma$ does not change orientation inside all connected components $\mathbf C$ of $\mathscr C$.

\begin{lemma}\label{lem:orientation}
Consider  $\mathbf C$ a connected component of  $\mathscr C$ with nonempty interior and let $L$ be a characteristic line such that $L\cap \overline \O_{pl} \subset\partial{\mathbf C}$. Then, either 
$$\mathring{\mathbf C}=\{x \in \mathring{\mathbf C} : \;  \sigma(x)\cdot(y-x) > 0 \text{ for all }y \in L\cap \overline \O_{pl} \}$$
or
$$\mathring{\mathbf C}= \{x \in \mathring{\mathbf C} : \;  \sigma(x)\cdot(y-x) < 0 \text{ for all }y \in L\cap \overline \O_{pl} \}.$$
\end{lemma}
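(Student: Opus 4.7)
My plan is to reduce the statement to showing that the continuous function $\phi(x,y):=\sigma(x)\cdot(y-x)$ is nowhere vanishing on the product $\mathring{\mathbf C}\times(L\cap\overline\O_{pl})$: once this is established, a sign dichotomy will follow from connectedness of the product. Here $\mathring{\mathbf C}$ is connected because $\mathbf C$ is convex by Theorem \ref{thm:structOp}, while $L\cap\overline\O_{pl}$ is a closed segment $[a,b]$ with $a,b\in\partial\Opl$ and $(a,b)\subset\Opl$, and $\phi$ is continuous since $\sigma$ is locally Lipschitz in $\Opl$ by Theorem \ref{thm:struct-sigma-u}.

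The condition $\phi(x,y)=0$ is equivalent to $y\in L_x$. For $y\in(a,b)\subset\Opl$ this is easy: $L$ is itself the characteristic line through $y$, so the non-crossing of characteristics inside $\Opl$ (\cite[Proposition 5.5]{BF}) would force $L_x=L$, whence $x\in L\cap\mathring{\mathbf C}=\emptyset$, a contradiction.

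The main obstacle I expect is to rule out the endpoint case $y=a$ (the case $y=b$ being symmetric), since $a\in\partial\Opl$. If $a\in\partial^c\Opl$, Definition \ref{def.dcopl} immediately forbids $a\in L_x$ for any $x\in\Opl$. Otherwise, I will use the structure furnished by Theorem \ref{prop:structOp2}: in case (i), $a$ sits at the junction of $L$ with $\Gamma_1$ while $\partial^c\mathbf C=S$ occupies the portion of $\partial\mathbf C$ opposite $L$; in case (ii), $\partial^c\mathbf C=\emptyset$. In both cases $a\notin\partial^c\mathbf C$, so by \cite[Theorem 6.22]{BF} the stress $\sigma$ is continuous at $a$ relative to $\overline{\mathbf C}$. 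Assume for contradiction $a\in L_x$ for some $x\in\mathring{\mathbf C}$; convexity of $\mathbf C$ makes $a$ an endpoint of the segment $L_x\cap\overline{\mathbf C}$. Approaching $a$ along $L_x\cap\overline{\mathbf C}$ -- on which $\sigma$ is constant equal to $\sigma(x)$ -- gives $\sigma(a)=\sigma(x)$; approaching $a$ along $(a,b)\subset L\cap\overline{\mathbf C}$ -- on which $\sigma$ is the constant $\pm\nu_L$ (a unit normal to $L$), because the characteristic direction $\sigma^\perp$ is perpendicular to $\sigma$ -- gives $\sigma(a)=\pm\nu_L$. Therefore $\sigma(x)=\pm\nu_L$, making $L_x$ parallel to $L$; since both contain $a$, $L_x=L$, recovering the contradiction $x\in L\cap\mathring{\mathbf C}$. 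This completes the non-vanishing of $\phi$, and the sign conclusion follows.
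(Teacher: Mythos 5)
Your proof is correct and shares the paper's basic strategy: show that $\sigma(x)\cdot(y-x)$ never vanishes for $(x,y)\in\mathring{\mathbf C}\times(L\cap\overline\O_{pl})$, then conclude by a connectedness argument. The differences are local but real. For the topological step, you use connectedness of the product, which replaces in one stroke the paper's two-step argument (showing that the set $\mathbf C^+$ of points $x$ where the inequality holds for all $y$ is simultaneously open -- via a quantitative continuity estimate -- and closed in $\mathring{\mathbf C}$, and then invoking the intermediate value theorem in the $y$-variable when $\mathbf C^+=\emptyset$); your version is slightly more economical. For the non-vanishing at the endpoints $y\in\{a,b\}\subset\partial\Opl$, the paper argues that two distinct characteristics meeting at a point of $\partial\Opl$ would create a boundary fan inside $\mathbf C$, contradicting $\mathbf C\subset\mathscr C=\Opl\setminus\mathscr F$; you instead combine the continuity of $\sigma$ on $\overline{\mathbf C}\setminus\partial^c\mathbf C$ from \cite[Theorem 6.22]{BF} (which the paper itself uses in Lemmas \ref{lem:f_cont} and \ref{lem:segment}) with the constancy of $\sigma$ along characteristics to force $L_x$ parallel to $L$, hence $L_x=L$. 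Both routes are legitimate. One small remark: the first branch of your endpoint case distinction ($a\in\partial^c\Opl$) is vacuous, since $a$ lies on $L=L_y$ for any $y\in L\cap\Opl$ and is therefore never a characteristic boundary point; this observation also renders your appeal to the two cases of Theorem \ref{prop:structOp2} unnecessary, as $a\notin\partial^c\mathbf C$ follows directly.
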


\begin{proof}
Define
$$\mathbf C^+:=\{x \in \mathring{\mathbf C} : \;  \sigma(x)\cdot(y-x) \geq 0 \text{ for all }y \in L\cap \overline \O_{pl} \}.$$
By continuity of $\sigma$ in $\mathring{\mathbf C}$, we have that $\mathbf C^+$ is closed in $\mathring{\mathbf C}$.

If $\sigma(x)\cdot (y-x)=0$ for some $x \in \mathring{\mathbf C}$ and $y \in L\cap \overline \O_{pl} $, then $L_x$ would be parallel to $y-x$ and the segment $[x,y]$ would be contained in $L_x$. But then both characteristics $L$ and $L_x$ would intersect at $y$ which would lead to a contradiction: indeed this is not possible if $y \in \Opl$ according to \cite[Proposition 5.5]{BF}, while, if $y \in \partial\Opl$, we would have constructed  a boundary fan contained in $\mathbf C$, which is  impossible since $\CC$ contains no boundary fans (recall \eqref{def:C}). This implies that 
$$\mathbf C^+=\{x \in \mathring{\mathbf C} : \; \sigma(x)\cdot(y-x) > 0 \text{ for all }y \in L\cap \overline \O_{pl} \}.$$

Let $x \in \mathbf C^+$ and $\e_0:= \min_{y \in L\cap \overline \O_{pl} }\sigma(x)\cdot(y-x)>0$. By continuity of $\sigma$ in $\mathring{\mathbf C}$, there exists $0<\delta < 3\e_0/4$ such that $B_\delta(x) \subset \mathring{\mathbf C}$ and, if $x' \in B_\delta(x)$, then 
$$|\sigma(x)-\sigma(x')|\leq \frac{\e_0}{4{\rm diam}(\mathbf C)}.$$ Thus, 
$$\sigma(x')\cdot (y-x')\geq \sigma(x)\cdot (y-x')-\frac{\e_0}{4{\rm diam}(\mathbf C)}|y-x'| \geq \sigma(x)\cdot (y-x) - \frac{\e_0}{4} - \delta\geq \frac{3\e_0}{4}-\delta>0$$
which proves that $x' \in \mathbf C^+$. As a consequence $\mathbf C^+$ is open which is possible only if $\mathbf C^+=\emptyset$ or  $\mathbf C^+=\mathring{\mathbf C}$ since $\mathring{\mathbf C}$ is connected. If $\mathbf C^+=\mathring{\mathbf C}$, then we are done. If $\mathbf C^+=\emptyset$, it means that 
$$\mathring{\mathbf C}=\{x \in \mathring{\mathbf C} : \;  \sigma(x)\cdot(y-x) < 0 \text{ for some }y \in L\cap \overline \O_{pl} \}.$$
Assume by contradiction that there exists $y' \in L\cap \overline \O_{pl} $ such that $ \sigma(x)\cdot(y'-x)>0$. For all $t \in [0,1]$, define $y_t:=ty+(1-t)y' \in L\cap \overline \O_{pl} $ because $L\cap \overline \O_{pl} $ is a closed line segment. The mapping $t \in [0,1] \mapsto \sigma(x) \cdot (y_t-x)$ is continuous, while $\sigma(x)\cdot(y_0-x) > 0$ and $ \sigma(x)\cdot(y_1-x) < 0$. The intermediate valued Theorem implies that, for some $t_0 \in \;]0,1[$, $ \sigma(x)\cdot(y_{t_0}-x) = 0$ which is impossible. Consequently, $\sigma(x)\cdot (y'-x)<0$ for all $y' \in L\cap \overline \O_{pl} $, which completes the proof of the lemma in that case as well.
\end{proof}

We can similarly explicit the structure of boundary fans. Recalling the definition \eqref{eq:fan} of a (boundary) fan $\mathbf F_{\hat z}$ with apex $\hat z\in\pa\O\cap\pa\Opl$, we set, for $i=1$, $2$, $L_i=\hat z+\R v_i$,
$$t_i:=\sup\{ t \geq 0 :\; \hat z+tv_i \in \partial\mathbf F_{\hat z}\}$$
and $a_i:=\hat z+t_i v_i$ so that $L_i \cap \overline \O_{pl} =[\hat z,a_i]$. By convexity of $\Opl$,
$$\partial\mathbf F_{\hat z}=[\hat z,a_1]\cup\Gamma\cup [\hat z,a_2],$$
where $\Gamma$ is an open connected set in $\partial\mathbf F_{\hat z}$. 

\begin{proposition}\label{prop:struct-fans}
Let $\mathbf F_{\hat z}$ be a boundary fan with apex $\hat z$ and generatrices $v_1$, $v_2 \in \R$ as in \eqref{eq:fan}, and let $\partial^c \mathbf F_{\hat z}:=\partial \mathbf F_{\hat z} \cap \partial^c\Opl$. Then,
\begin{itemize}
\item Either $\partial^c \mathbf F_{\hat z}=\; ]\hat z,a_1] \, \cup \; ]\hat z,a_2]$;
\item Or $\partial^c \mathbf F_{\hat z}=\; ]\hat z,a_1]$ and $L_2$ is a characteristic line (or the converse);
\item Or $\partial^c \mathbf F_{\hat z}=\emptyset$ and both $L_1$, $L_2$ are characteristic lines.
\end{itemize}
\end{proposition}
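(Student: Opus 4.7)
My plan is to treat the two generatrices independently after exploiting a convexity dichotomy. Since $\hat z$ and $a_i$ both lie in $\partial\Opl$, convexity of $\Opl$ forces the open chord $]\hat z,a_i[$ to be either entirely contained in $\Opl$ or entirely contained in $\partial\Opl$. Before using this dichotomy, I verify that $\Gamma\cap\partial^c\Opl=\emptyset$ and $\hat z\notin\partial^c\Opl$: inside the fan the stress is a vortex centered at $\hat z$, so every radial characteristic $L_y$ with $y\in\mathbf F_{\hat z}$ passes through $\hat z$ and exits $\Opl$ at exactly one point of $\Gamma$, and as $y$ ranges over $\mathbf F_{\hat z}$ these exit points sweep out all of $\Gamma$. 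It follows that
\[
\partial^c\mathbf F_{\hat z}=\bigl(\,]\hat z,a_1]\cap\partial^c\Opl\bigr)\cup\bigl(\,]\hat z,a_2]\cap\partial^c\Opl\bigr),
\]
so the three cases of the statement correspond to the four binary combinations of the dichotomy applied to $L_1$ and $L_2$.

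If $]\hat z,a_i[\,\subset\Opl$, I would show that $L_i$ is a characteristic line. Given $y\in\,]\hat z,a_i[$, pick a sequence $y_n\in\mathbf F_{\hat z}$ converging to $y$; then $\sigma(y_n)=\pm(y_n-\hat z)^\perp/|y_n-\hat z|$ by the vortex formula in the fan, and the local Lipschitz regularity of $\sigma$ in $\Opl$ (Theorem \ref{thm:struct-sigma-u}) yields $\sigma(y)=\pm(y-\hat z)^\perp/|y-\hat z|$. Thus $\sigma^\perp(y)$ is parallel to $L_i$ and $L_y=L_i$, so $L_i$ is a characteristic line in the sense of Theorem \ref{thm:struct-sigma-u}.

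The delicate case is $]\hat z,a_i[\,\subset\partial\Opl$, where I want to prove $]\hat z,a_i]\subset\partial^c\Opl$. Then $L_i$ is a supporting line of the convex set $\Opl$, so $\Opl\cap L_i=\emptyset$ and $\Opl$ lies in the open half-plane bounded by $L_i$ on the fan side. Assume for contradiction that some $x\in\,]\hat z,a_i]$ satisfies $x\in L_z$ for a point $z\in\Opl$. If $z\in\mathbf F_{\hat z}$, then $L_z$ is radial through $\hat z$ and $z$, and $x\in L_z$ forces $z\in L_i$, contradicting $z$ being in the open cone. If $z\in\Opl\setminus\mathbf F_{\hat z}$, pick $y_n\in\mathbf F_{\hat z}$ tending to an interior point of $]\hat z,a_i[$, so that $L_{y_n}$ converges to $L_i$. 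Since $L_z\neq L_i$ (otherwise $z\in L_i\cap\Opl=\emptyset$), the intersections $p_n:=L_{y_n}\cap L_z$ are well defined and converge to $x$. A direct computation in coordinates with $L_i$ horizontal shows that $p_n$ lies strictly on the fan side of $L_i$, and the flatness of $]\hat z,a_i[\,\subset\partial\Opl$ ensures that $\Opl$ locally coincides with the full open half-plane near any interior point of that segment, so $p_n\in\Opl$ for $n$ large; this contradicts the non-crossing of distinct characteristic lines in $\Opl$ from \cite[Proposition 5.5]{BF}. The endpoint $x=a_i$, where $\partial\Opl$ has a corner, requires a small refinement based on the local wedge structure of $\Opl$ at $a_i$, which I expect to be the main technical subtlety of the argument.
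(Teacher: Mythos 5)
Your proof follows the same skeleton as the paper's own (which is far terser: it only records that $\Gamma$ is swept by the radial characteristics, so that $\partial^c\mathbf F_{\hat z}\subset\,]\hat z,a_1]\,\cup\,]\hat z,a_2]$, and that $]\hat z,a_i[\,\subset\Opl$ forces $L_i$ to be a characteristic line). Your convexity dichotomy, your treatment of $\Gamma$ and of $\hat z$, and your argument in the case $]\hat z,a_i[\,\subset\Opl$ are all correct. The one genuine gap is the one you flag yourself: in the case $]\hat z,a_i[\,\subset\partial\Opl$ you must also show that the endpoint $a_i$ lies in $\partial^c\Opl$, and your mechanism (the points $p_n$ lie in the open half-plane $H$, plus ``$\Opl$ locally coincides with $H$'') genuinely breaks there, because $\partial\Opl$ may have a corner at $a_i$ and $B_\rho(a_i)\cap H\not\subset\Opl$ in general. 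Since the statement asserts $\partial^c\mathbf F_{\hat z}=\,]\hat z,a_i]$ including the endpoint, this must be closed.

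The fix is short, and it in fact supersedes your $p_n$ construction for interior points as well. Suppose $x\in\,]\hat z,a_i]$ lies on $L_z$ for some $z\in\Opl$. Since $x\in L_z\cap\partial\Opl$ and $z\in L_z\cap\Opl$, convexity gives $L_z\cap\overline\O_{pl}=[x,x']$ with $]x,x'[\,\subset\Opl$, so $L_z\cap\Opl$ contains points $q_m\to x$. These $q_m$ lie in $\Opl\subset H$, where $H$ is the open half-plane bounded by the supporting line $L_i$; and near any point of $]\hat z,a_i]$ --- including $a_i$, which is at positive distance from the apex and from the other generatrix ray --- the open cone $\hat z+C(v_1,v_2)$ coincides with $H$. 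Hence $q_m\in\Opl\cap(\hat z+C(v_1,v_2))=\mathbf F_{\hat z}$ for $m$ large. The vortex structure then gives $L_z=L_{q_m}\ni\hat z$ (using that $\sigma$ is constant on $L_z\cap\Opl$), so $L_z$ contains the two distinct points $\hat z$ and $x$ of $L_i$, i.e.\ $L_z=L_i$, contradicting $z\in L_z\cap\Opl$ and $L_i\cap\Opl=\emptyset$. This handles interior points and the endpoint uniformly, and lets you dispense with the intersection points $p_n$, the half-plane coincidence lemma, and the appeal to \cite[Proposition 5.5]{BF} in this step.
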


\begin{proof}
Since, by \cite[Lemma 6.1]{BF}, $\Gamma$ is traversed by characteristic line in $\mathbf F_{\hat z}$ which also passes through $\hat z$, we deduce that $\partial^c\mathbf F_{\hat z} \subset \; ]\hat z,a_1] \; \cup \; ]\hat z,a_2]$. The conclusion follows observing that, if $]\hat z,a_i[\; \subset \Opl$, then $L_i$ is a characteristic line.
\end{proof}

In view of {Theorem \ref{prop:structOp2} and Proposition \ref{prop:struct-fans}}, if follows that $\partial^c\Opl$, if not empty, is the union of pairwise disjoint line segments possibly reduced to a single point. The following result will imply as a corollary that the characteristic boundary $\partial^c\Opl$ has at most {\it two} connected components. 

\begin{lemma}\label{lem:char-bound}
 There does {\bf not} exist three pairwise disjoint  nonempty characteristic lines $L_1 \cap  \overline\O_{pl}$, $L_2 \cap   \overline\O_{pl}$ and $L_3 \cap   \overline\O_{pl}$ such that
\begin{equation}\label{eq:Si}
L_j \cap \overline \O_{pl}\subset H_i \quad \text{ for all } i \neq j,
\end{equation}
where $H_1$, $H_2$, $H_3$ are half-planes with $\partial H_i=L_i$ for $i=1,2,3$.
\end{lemma}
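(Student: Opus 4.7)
The plan is to argue by contradiction. Assume three such pairwise disjoint characteristic line segments $L_i\cap\overline\O_{pl}$ and half-planes $H_i$ exist. By Theorem \ref{thm:struct-sigma-u}, on each $L_i\cap\Opl$ the stress $\sigma$ equals a constant unit vector $\sigma_i$ perpendicular to $L_i$, and, away from an $\HH^1$-negligible exceptional set of characteristics, $u$ takes a constant value $\lambda_i$ along $L_i\cap\Opl$. Let $H_i^+$ denote the open half-plane bounded by $L_i$ containing $\sigma_i$; since $H_i$ is one of the two half-planes bounded by $L_i$, it must coincide either with $H_i^+$ or with its opposite.

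The central ingredient is the monotonicity of $u$ already exploited in the proof of Proposition \ref{prop:level-set}. For regular points $y\in L_i\cap\Opl$ and $z\in L_j\cap\Opl$, the open segment $(y,z)$ lies in $\Opl$ by convexity; setting $\xi=(z-y)/|z-y|$, the flow rule $Du=\sigma\mu$ (with $\mu=\LL^2+|p|$) combined with the $BV$ disintegration along $\xi$ implies that $u$ is non-decreasing along $[y,z]$ whenever $\sigma_i\cdot\xi>0$, which is equivalent to $z\in H_i^+$. Hence $\lambda_j\geq\lambda_i$ when $L_j\cap\overline\O_{pl}\subset H_i^+$, and $\lambda_j\leq\lambda_i$ in the opposite case. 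Applying this to both $j\ne i$, we deduce that, for each $i$, the value $\lambda_i$ must be either the minimum or the maximum of $\{\lambda_1,\lambda_2,\lambda_3\}$.

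Reorder so that $\lambda_1\leq\lambda_2\leq\lambda_3$. If the three values are pairwise distinct, then $\lambda_2$ is neither the minimum nor the maximum, a direct contradiction. Otherwise, two values coincide, say $\lambda_i=\lambda_j$ with $L_i\ne L_j$; the equality case of the monotonicity forces $u$ to equal $\lambda_i$ along every segment joining a regular point of $L_i\cap\Opl$ to a regular point of $L_j\cap\Opl$. Varying these endpoints sweeps out an open two-dimensional subset of $\Opl$, contained in the convex hull of $(L_i\cap\Opl)\cup(L_j\cap\Opl)$, on which $u$ is constant. This violates Lemma \ref{lem:loc-const}, closing the argument.

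The main technical obstacle is to upgrade the slice-wise monotonicity into pointwise two-dimensional constancy in the equal-values case, and to deal with the $\HH^1$-negligible set of exceptional characteristic lines where $u$ is not \emph{a priori} constant; both difficulties are of the same flavor as those already handled in the proof of Proposition \ref{prop:level-set} and should be addressable by a Fubini-type argument together with a careful selection of regular representatives.
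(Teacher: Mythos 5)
Your route is genuinely different from the paper's. The paper argues topologically along $\partial\Opl$: it parametrizes the boundary by a Lipschitz loop, orders the six endpoints of the three segments along it, and follows the one-parameter family of characteristics through the segment joining two of the lines, deriving a contradiction from the non-intersection of characteristics in $\Opl$ and the exclusion of certain boundary fans. Your argument instead exploits the order structure that $u$ induces on the characteristics, which is conceptually cleaner and shorter. Note moreover that \eqref{eq:1531} gives $Du_y^\xi\ge(\sigma_y^\xi\cdot\xi)\,\LL^1$ with $\sigma\cdot\xi$ continuous and positive, hence bounded below by a positive constant, on the compact segment joining the two lines; the comparison is therefore \emph{strict}, $\lambda_j>\lambda_i$ whenever $L_j\cap\overline\O_{pl}$ lies on the $\sigma_i$ side of $L_i$. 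Each $\lambda_i$ would then have to be the unique strict minimum or the unique strict maximum of $\{\lambda_1,\lambda_2,\lambda_3\}$, which is impossible for three lines; in particular the equal-values case, and with it your appeal to Lemma \ref{lem:loc-const}, never arises.

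The one step you must actually supply, and which is more than bookkeeping, is the reduction to non-exceptional lines: the $L_i$ in the statement are arbitrary characteristics, and Theorem \ref{thm:struct-sigma-u} only guarantees that $u$ is constant along $L_x\cap\Opl$ for $x$ outside the $\LL^2$-null union of exceptional lines, so your constants $\lambda_i$ need not exist as written. This is repaired by perturbation: pick $y_i\in L_i\cap\Opl$ and replace $L_i$ by $L_{y_i'}$ with $y_i'$ close to $y_i$ and off the exceptional set (possible since that set is $\LL^2$-null). By continuity of $\sigma$ in $\Opl$, the segments $L_{y_i'}\cap\overline\O_{pl}$ converge in the Hausdorff sense to $L_i\cap\overline\O_{pl}$ (as in Step 1 of the proof of Proposition \ref{prop:cont}), and since each $L_j\cap\overline\O_{pl}$ is compact, contained in $\overline\O_{pl}$, and disjoint from $L_i\cap\overline\O_{pl}$, it sits at positive distance from the whole line $L_i$; hence the perturbed configuration is still pairwise disjoint and still satisfies \eqref{eq:Si}. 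With that reduction, and with the precise-representative and Fubini details borrowed from the proof of Proposition \ref{prop:level-set} (including the transversality of the slicing direction $\xi$ to $L_j$, which holds because two distinct characteristics cannot meet in $\Opl$), your argument closes.
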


\begin{proof}
$\partial \Opl$ is a closed, connected set with finite $\HH^1$ measure, therefore, according to \cite[Proposition C-30.1]{david},  it is arcwise connected and  there exists a $1$-periodic Lipschitz continuous mapping 
\begin{equation}\label{eq:star}
\gamma:[0,1] \to \R^2 \text{ such that }\gamma(0)=\gamma(1) \text{ and }\partial \Opl =\gamma([0,1]).
\end{equation}

Let us define the set $$\mathbf C:=\Opl \cap H_1 \cap H_2 \cap H_3.$$ 
It is nonempty, convex, and its boundary contains the three open characteristic line segments $L_i \cap \Opl$. Note that $L_i\cap\partial\Opl=\{x_i,x'_i\}$, where both points $x_i$ and $x'_i$ lie in $\partial\Opl$, so that $L_i\cap\Opl=\; ]x_i,x'_i[$. Since  $L_1 \cap \overline \O_{pl}$, $L_2 \cap \overline \O_{pl}$ and $L_3 \cap \overline \O_{pl}$  are pairwise disjoint, the points $x_1$, $x'_1$, $x_2$, $x'_2$, $x_3$ and $x'_3$ are pairwise distinct. Setting $x_i=\gamma(r_i)$ and $x'_i=\gamma(r'_i)$, we must have
$${0 \leq r'_1 < r_2 <  r'_2 < r_3< r'_3 < r_1 < 1}$$
upon an appropriate choice of $\gamma(0)$. By convexity of $\mathbf C$, the middle points $y_i:={(x_i+x'_i)}/2$ belong to $L_i\cap\Opl$ and consequently, $L_{y_i}=L_i$. Moreover, for $i \neq j$, the closed segments $[y_i,y_j] \subset \Opl$ cannot be contained in a characteristic line $L_x$, for some $x \in \Opl$, otherwise $L_x$ and $L_i$ (resp. $L_j$) would intersect at $y_i$ (resp. $y_j$), which is not possible in view of  \cite[Proposition 5.5]{BF}.

For any $t \in \; ]0,1[$, define $y(t):=ty_3+(1-t)y_1 \in \; ]y_1,y_3[$. The intersection points of $L_{y(t)}$ with $\partial \Opl$, respectively denoted  by $\gamma(s_t)$ and $\gamma(s'_t)$, satisfy
$$s'_t\in [r'_3,r_1], \quad s_t \in [r'_1,r_2] \cup [r'_2,r_3].$$

Define 
$$\underline t:=\sup\{t \in [0,1]: \; L_{y(t)}\cap\gamma([r'_1,r_2])\neq \emptyset\},\quad \overline   t:=\inf \{t \in [0,1]: \;  L_{y(t)}\cap\gamma([r'_2,r_3]) \neq \emptyset\}.$$ 
If, for all $y \in \; ]y_1,y_3[$, the characteristic line $L_y$ intersects $\gamma([r'_2,r_3])$, then, by continuity of $\sigma$ on $[y_1,y_3] \subset \Opl$, we would have that $L_{y_1}\cap\gamma([r'_2,r_3])\neq \emptyset$ which is impossible since $L_{y_1}=L_1$ is disjoint from $\gamma([r'_2,r_3])$. Therefore the set $\{t \in [0,1]: \; L_{y(t)}\cap\gamma([r'_1,r_2])\neq \emptyset\}$ is not empty and $\underline t>0$. A similar argument also shows that $\overline   t<1$. 
Further $\underline t\le\overline   t$ otherwise we could find two points $y$, $y'\in\; ]y_1,y_3[$ such that $L_y$ and $L_{y'}$ would intersect inside $\Opl$, which is impossible by  \cite[Proposition 5.5]{BF}.

 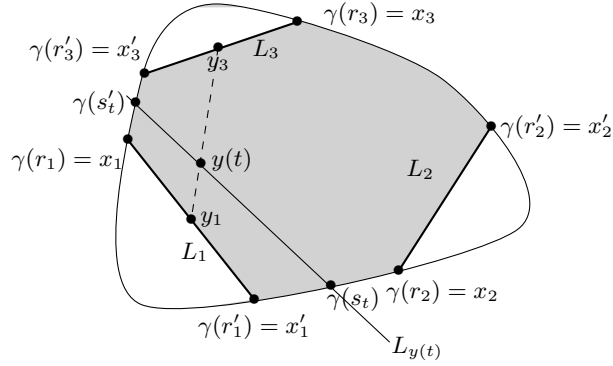
\begin{figure}[hbtp]
\scalebox{1}{\begin{tikzpicture}
\fill[color=gray!35]  plot[smooth cycle] coordinates {(-1,-1) (4,0) (3,2) (0,3) (-1,2)};

\fill[color=white]  plot coordinates {(2.42,-0.55) (4.5,-0.55) (4.5,1.37) (3.65,1.37)  };
\fill[color=white]  plot coordinates {(-3,1.19) (-1.18,1.19)  (.5,-.93) (-2,-1.33)  };
\fill[color=white]  plot coordinates {(-0.97,2.05) (-0.97,2.95)  (1.07,2.96) (1.07,2.76)  };

\draw plot[smooth cycle] coordinates {(-1,-1) (4,0)  (3,2) (0,3) (-1,2)};
\draw[style=thick] (2.42,-0.55) -- (3.65,1.37) ;\draw(2.4,.8) node[right]{\small $L_2$};

\draw[style=thick] (-1.18,1.19) -- (.5,-.93) ; \draw(-.6,-.3) node[right]{\small $L_1$};
\draw  (-1.18,1.19) node{{\small $\bullet$}};
\draw (-1.98,1.2) node[below]{{\small $\gamma(r_1)=x_1$}};

\draw  (.5,-.93) node{{\small $\bullet$}};
\draw (.5,-1) node[below]{{\small $\gamma(r'_1)=x'_1$}};

\draw  (1.07,2.76) node{{\small $\bullet$}};
\draw (1.3,2.86) node[right]{{\small $\gamma(r_3)=x_3$}};

\draw[style=thick] (-0.96,2.07) -- (1.07,2.76) ; \draw (0.35,2.4) node[right]{\small $L_3$};
\draw  (-0.96,2.07) node{{\small $\bullet$}};
\draw  (-0.87,2.4) node[left]{{\small $\gamma(r'_3)=x'_3$}};

\draw  (2.42,-0.55) node{{\small $\bullet$}};
\draw (3,-0.55)  node[below]{{\small $\gamma(r_2)=x_2$}};
\draw  (3.65,1.37) node{{\small $\bullet$}};
\draw  (3.65,1.37)  node[right]{{\small $\gamma(r'_2)=x'_2$}};

\draw (0.02,2.42) node[below]{{\small $y_3$}};
\draw (0.02,2.42) node{{\small $\bullet$}};
\draw (-0.34,0.13) node{{\small $\bullet$}};
\draw (-0.34,0.13) node[right]{{\small $y_1$}};
\draw[dashed] (0.02,2.42)--(-0.34,0.13);
\draw (0.2,1.2) node[below]{{\small $y(t)$}};
\draw (-0.21,.87) node{{\small $\bullet$}};

\draw (-1.2,1.78)--(2.3,-1.5);

\draw(2.2,-1.58) node[right]{{\small $L_{y(t)}$}};
\draw (-1.08,1.69) node[left]{{\small $\gamma(s'_t)$}};
\draw (-1.08,1.69) node{{\small $\bullet$}};
\draw (1.82,-0.65) node[below]{{\small $\gamma(s_t)$}};
\draw (1.52,-0.75) node{{\small $\bullet$}};

\end{tikzpicture}}
\caption{\small  Case of three characteristic characteristic line segments $L_1$, $L_2$, $L_3$.}
\label{fig:isoceles}
\end{figure}

\medskip

Let $\{t_n\}_{n \in \N}$ be a maximizing sequence in $[0,1]$ such that $L_{y(t_n)}\cap\gamma([r'_1,r_2]) \neq \emptyset$ for all $n \in \N$ and $t_n \to \underline t$. Since $\gamma(s'_{t_n}) \in L_{y(t_n)}=y(t_n)+\R \sigma^\perp(y(t_n))$, there exists $\theta_n \in \R$ such that
$$\gamma(s'_{t_n})=y(t_n)+\theta_n \sigma^\perp(y(t_n)),$$
where $\{\gamma(s'_{t_n})\}_{n \in \N}$ is a sequence in $\gamma([r'_3,r_1])$ (hence bounded) and $\{\theta_n\}_{n \in \N}$ is a bounded sequence since $|\sigma^\perp(y(t_n))|=1$ for all $n\in \N$. Therefore, up to a further subsequence $\gamma(s'_{t_n}) \to \underline x \in \gamma([r'_3,r_1])$ and $\theta_n \to \theta$. Thus, using that  $\sigma$ is continuous in $[y_1,y_3] \subset \Opl$, it follows that
$$\underline x=y(\underline t)+\theta \sigma^\perp(y(\underline t)),$$
hence $\underline x \in L_{y(\underline t)} \cap \gamma([r'_3,r_1])$ and $L_{y(\underline t)}$ intersects $\gamma([r'_1,r_2])$. Note that since $\underline t \in \; ]0,1[$, then $\underline y:=y(\underline t) \in\; ]y_1,y_3[$. If $\underline x=\gamma(r'_3)$, then $\underline x$ would be the intersection point of two distinct characteristic lines $L_3$ and $L_{y(\underline t)}$, {hence the apex of a boundary fan $\mathbf F_{\underline x}$ containing $L_2 \cap \Opl$ which is impossible by \cite[Lemma 6.1]{BF}}. On the other hand, if  $\underline x=\gamma(r_1)$, using the continuity of $\sigma$ and the fact that $L_{y(t)}$ intersects $\gamma([r'_2,r_3])$ for all $t>\underline t$, we could construct a sequence $t'_n\searrow \underline t$ such that, for some $\theta'_n \in\R$,
$$\gamma(s'_{t'_n})=y(t'_n)+\theta'_n \sigma^\perp(y(t'_n)).$$
Passing to the limit, we would get that $\gamma(s'_{t'_n})\to \underline x'$ and $\theta'_n \to \theta'$
with $\underline x'\in \gamma([r'_3,r_1])$ and 
$$
\underline x'=y(\underline t)+\theta' \sigma^\perp(y(\underline t)).
$$
Thus $\underline x'\in L_{y(\underline t)}\cap \gamma([r'_3,r_1])$ so $\underline x'=\underline x$ {and we would get that $\underline x$ is the apex of a boundary fan, still denoted by $\mathbf F_{\underline x}$, with, again, the property that $L_2 \cap \Opl \subset \mathbf F_{\underline x}$.} Therefore, $\underline x \in  \gamma(]r'_3,r_1[)$. 

A similar argument shows that $\overline   t \in\; ]0,1[$, $\overline   y:=y(\overline   t) \in\; ]y_1,y_3[$, $L_{\overline   y} \cap \gamma(]r'_2,r_3[)\neq \emptyset$ and $L_{y(\overline   t)}$ intersects $\gamma([r'_2,r_3])$ { as well as $\gamma(]r'_3,r_1[)$}.

Since $r_2<r'_2$, then $\underline t < \overline   t$, otherwise $L_{\underline y}$ and $L_{\overline   y}$ would intersect inside $\Opl$ at a single point $\underline y=\overline   y$ since $L_{\underline y} \cap \gamma(]r'_1,r_2[)\neq \emptyset$ and  $L_{\overline   y} \cap \gamma(]r'_2,r_3[) \neq \emptyset$. But this is impossible by  \cite[Proposition 5.5]{BF}.

Denote by $H_{\underline y}$ and $H_{\overline y}$ the open half-planes with boundary $L_{\underline y}$ and $L_{\overline   y}$ that do not contain the points $y_1$ and $y_3$ respectively. The  region $\mathbf C':=\mathbf C\cap H_{\underline y}\cap H_{\overline y}$ contains the characteristic line segment $L_{y(t)} \cap \Opl$ for all $t\in \; ]\underline t,\overline   t[$. Such a line segment cannot intersect  $L_{\underline y}\cap \Opl$ and $L_{\overline   y}\cap \Opl$ by  \cite[Proposition 5.5]{BF}, it cannot intersect the  connected boundaries $\gamma(]r'_1,r_2[)$ and $\gamma(]r'_2,r_3[)$ by construction, and it cannot intersect the open line segment  $]\gamma(r_2),\gamma(r'_2)[\; = L_2 \cap \Opl$ by \cite[Proposition 5.5]{BF}. The line $L_{y(t)}$ must therefore intersect the point $\gamma(r_2)$ (resp. $\gamma(r'_2)$). {This  is again impossible since there would be a boundary fan containing $L_3 \cap \Opl$ (resp. $L_1 \cap \Opl$), in contradiction with \cite[Lemma 6.1]{BF}.}
\end{proof}

\begin{corollary}\label{cor:char-bound}
The set $\partial^c\Opl$ is the union of at most two pairwise disjoint line segments possibly reduced to a single point.
\end{corollary}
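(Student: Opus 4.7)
The plan is to argue by contradiction via Lemma~\ref{lem:char-bound}: assume $\partial^c\Opl$ has at least three pairwise disjoint connected components $S_1,S_2,S_3$. By Theorem~\ref{prop:structOp2} and Proposition~\ref{prop:struct-fans}, each $S_i$ is a line segment (possibly reduced to a single point) arising either as an edge $]\hat z_i,a_i]$ of a boundary fan $\mathbf F_{\hat z_i}$ or as $\partial^c\mathbf C_i$ for a component $\mathbf C_i$ of $\mathscr C$ in the case~(i) setting. I would first rule out the degenerate sources that already yield exactly two components: when $\mathbf C=\Opl$ in Theorem~\ref{prop:structOp2} we have $\partial^c\Opl=S\cup S'$; and if both cone edges of a fan $\mathbf F_{\hat z}$ lie in $\partial^c\Opl$ (case~1 of Proposition~\ref{prop:struct-fans}), then $\partial\mathbf F_{\hat z}\subset\partial\Opl$, so $\mathbf F_{\hat z}$ is both open and closed in the connected set $\Opl$, forcing $\mathbf F_{\hat z}=\Opl$ and again $\partial^c\Opl$ with exactly two components. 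Under the contradiction hypothesis, the three components $S_1,S_2,S_3$ therefore come from three pairwise distinct sources.

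Next I would build approximating characteristic lines. For each $i$, pick a characteristic line $L_i$ with $L_i\cap\overline\Opl$ close to $S_i$ in Hausdorff distance: in the fan case, take $L_i$ through the apex $\hat z_i$ with direction slightly inside the cone and close to that of $S_i$, so that $L_i\cap\overline\Opl$ approximates $[\hat z_i,a_i]$; in the $\mathbf C_i$ case, use the homeomorphism $f$ of Lemma~\ref{lem:f_cont} to select $x\in\Gamma_1^{(i)}$ near $S_i$, so that both $x$ and $f(x)$ lie near $S_i$ and the chord $L_x\cap\overline\Opl=[x,f(x)]$ is correspondingly close to $S_i$. The three $L_i$'s lie in pairwise disjoint regions from Theorem~\ref{thm:structOp} and share no fan apex; since characteristic lines cannot intersect inside $\Opl$ (by \cite[Proposition~5.5]{BF}) nor share a non-apex point of $\partial\Opl$ (each such point being the endpoint of a unique characteristic line), the three segments $L_i\cap\overline\Opl$ are pairwise disjoint.

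The half-plane condition follows from the convexity of $\Opl$: each chord $L_i\cap\overline\Opl$ cuts $\overline\Opl$ into two closed pieces, one of which is a ``cap'' close to $S_i$, whose diameter can be made arbitrarily small by tightening the approximation. Because the remaining components $S_j$ and $S_k$ ($j,k\ne i$) sit at positive distance from $S_i$, a sufficiently tight choice of $L_i$ ensures that the cap near $S_i$ excludes both $S_j$ and $S_k$. Letting $H_i$ be the closed half-plane bounded by $L_i$ opposite to that cap, we obtain $L_j\cap\overline\Opl\subset H_i$ for all $j\ne i$. This contradicts Lemma~\ref{lem:char-bound} and concludes the argument.

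The main potential obstacle is guaranteeing that, when $S_i$ reduces to a single point, the chord $L_x\cap\overline\Opl$ really shrinks to a small neighbourhood of $S_i$ when $x\in\Gamma_1^{(i)}$ approaches $S_i$; this will require using the continuity of $f$ from Lemma~\ref{lem:f_cont} together with the fact that the two adjacent endpoints of $\Gamma_1^{(i)}$ and $\Gamma_2^{(i)}$ coalesce at $S_i$ in the point case.
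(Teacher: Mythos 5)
Your proposal is correct and follows essentially the same strategy as the paper: assume three pairwise disjoint components of $\partial^c\Opl$, produce three pairwise disjoint characteristic chords, one near each component, such that each chord's complementary half-plane contains the other two, and conclude via Lemma \ref{lem:char-bound}. The paper asserts the existence of these three lines more tersely from Theorem \ref{prop:structOp2} and Proposition \ref{prop:struct-fans} (and dispatches the shared-endpoint degeneracy through the fan-apex argument), whereas you construct them by Hausdorff approximation; both routes are sound, and the minor imprecisions in your write-up (the cap is contained in a small neighbourhood of $S_i$ rather than having small diameter, and disjointness of the chords is best deduced from their Hausdorff closeness to the pairwise disjoint compacta $\overline{S_i}$ rather than from uniqueness of characteristics at boundary points) do not affect the argument.
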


\begin{proof}
Assume, by contradiction, that $S_1$, $S_2$, $S_3$ are three distinct pairwise disjoint {nonempty} line segments with $S_j=(a_j,b_j)$ in $\partial^c \Opl$ (so, in our notation, with possibly $a_j=b_j$). 

Recalling the mapping $\gamma$ of \eqref{eq:star}, we renumber the $S_j$ and exchange $a_j$ with $b_j$ if necessary so that
$$a_i=\gamma(s_i),\; b_j=\gamma(t_j) \text{ for }j\in \{1,2,3\}\mbox{ with } { 0\leq t_1\leq s_2\leq t_2\leq s_3\leq t_3 \leq s_1\leq 1}.$$
We further set
$$\Gamma_1:=\gamma(]t_1,s_2[), \quad \Gamma_2:=\gamma(]t_2,s_3[), \quad \Gamma_3:=\gamma(]t_3,s_1[).$$

If $t_1=s_2$, then $\Gamma_1=\emptyset$ and $b_1=a_2$. As a consequence of {Theorem \ref{prop:structOp2} and Proposition \ref{prop:struct-fans}}, we must have that $S_1=[a_1,b_1[$ and $S_2=\; ]a_2,b_2]$ and the point $b_1=a_2$ is the apex of a boundary fan $\mathbf F$ which, because of the convex character of $\Opl$ must be  $\Opl$  itself. This is however not possible since, by \cite[Lemma 6.1]{BF}, we would get that every point of $S_3$ is traversed by a characteristic line, a contradiction with the fact that $S_3\subset \partial^c\Opl$. This argument shows that $t_1 \neq s_2$, and we prove similarly that $t_2 \neq s_3$, and $t_3 \neq s_1$. 

\medskip

According to Theorem \ref{prop:structOp2} and to Proposition  \ref{prop:struct-fans} and because of the convex character of $\Opl$, there exist three characteristic lines $L_1$, $L_2$, $L_3$ and associated open half planes $H_1$, $H_2$, $H_3$ with $\partial H_i=L_i$ for $i=1,2,3$ such that (see Figure \ref{fig:isoceles2})
$$S_i \subset \R^2 \setminus H_i, \quad S_j \subset H_i \quad \text{ for all } i \neq j.$$
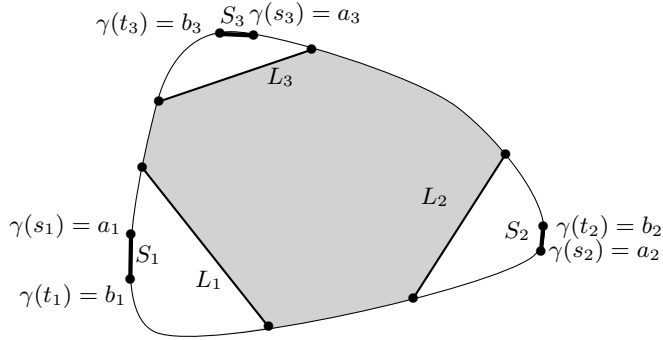
\begin{figure}[hbtp]
\scalebox{1}{\begin{tikzpicture}
\fill[color=gray!35]  plot[smooth cycle] coordinates {(-1,-1) (4,0) (3,2) (0,3) (-1,2)};

\fill[color=white]  plot coordinates {(2.42,-0.55) (4.5,-0.55) (4.5,1.37) (3.65,1.37)  };
\fill[color=white]  plot coordinates {(-3,1.19) (-1.18,1.19)  (.5,-.93) (-2,-1.33)  };
\fill[color=white]  plot coordinates {(-0.97,2.05) (-0.97,2.95)  (1.07,2.96) (1.07,2.76)  };

\draw plot[smooth cycle] coordinates {(-1,-1) (4,0)  (3,2) (0,3) (-1,2)};
\draw[style=thick] (2.42,-0.55) -- (3.65,1.37) ;\draw(2.4,.8) node[right]{\small $L_2$};

\draw[style=thick] (-1.18,1.19) -- (.5,-.93) ; \draw(-.6,-.3) node[right]{\small $L_1$};
\draw  (-1.18,1.19) node{{\small $\bullet$}};

\draw  (.5,-.93) node{{\small $\bullet$}};

\draw  (1.07,2.76) node{{\small $\bullet$}};

\draw[style=thick] (-0.96,2.07) -- (1.07,2.76) ; \draw (0.35,2.4) node[right]{\small $L_3$};
\draw  (-0.96,2.07) node{{\small $\bullet$}};

\draw  (2.42,-0.55) node{{\small $\bullet$}};
\draw  (3.65,1.37) node{{\small $\bullet$}};

\draw (-2.2,.7) node[below]{{\small $\gamma(s_1)=a_1$}};
\draw  (-1.33,.3) node{{\small $\bullet$}};
\draw  (-1.34,-.3) node{{\small $\bullet$}};
\draw[style=  ultra thick] (-1.33,.3)--(-1.34,-.3);
\draw (-2.1,-.24) node[below]{{\small $\gamma(t_1)=b_1$}};
\draw(-1.4,0.03) node[right]{{\small $S_1$}};

\draw (4.1,.08) node[right]{{\small $\gamma(s_2)=a_2$}};
\draw  (4.12,.08) node{{\small $\bullet$}};
\draw  (4.15,.4) node{{\small $\bullet$}};
\draw[style=  ultra thick] (4.12,.08)--(4.15,0.4);
\draw (4.2,.4) node[right]{{\small $\gamma(t_2)=b_2$}};
\draw(4.1,.35) node[left]{{\small $S_2$}};

\draw (0.99,2.95) node[above]{{\small $\gamma(s_3)=a_3$}};
\draw  (0.3,2.95) node{{\small $\bullet$}};
\draw  (-0.15,2.97) node{{\small $\bullet$}};
\draw[style=  ultra thick] (0.3,2.95)--(-0.15,2.97);
\draw (-0.25,3.1) node[left]{{\small $\gamma(t_3)=b_3$}};
\draw(0.32,3.2) node[left]{{\small $S_3$}};

\end{tikzpicture}}
\caption{\small  Case of three characteristic boundaries $S_1$, $S_2$, $S_3$ surrounded by characteristic line segments.}
\label{fig:isoceles2}
\end{figure}
{Note also that $L_1 \cap L_2 \cap \overline \O_{pl}=\emptyset$, otherwise denoting by $x$ the intersection point of $L_1 \cap \overline\O_{pl}$ and $L_2 \cap \overline\O_{pl}$, it would follow that $x$ is the apex of a boundary fan containing $L_3 \cap \Opl$ which is not possible by \cite[Lemma 6.1]{BF}. A similar argument actually shows that 
$L_i \cap L_j \cap \overline \O_{pl}=\emptyset$ for all $i \neq j$. But this geometrical configuration is not allowed by Lemma \ref{lem:char-bound}.}
\end{proof}

\begin{remark}\label{rk.closedness}
In view of {Theorem \ref{prop:structOp2}, Proposition \ref{prop:struct-fans}} and Corollary \ref{cor:char-bound},
the characteristic boundary $\partial^c\Opl$ must be the union of at most two  line segments (maybe reduced to single points) and the end points of those line segments must belong to $\partial^c\Opl$ unless they are the apex of a {boundary} fan.
\hfill\P\end{remark}

If we further  assume that $\O$ is a convex set  the  following additional property holds true.

\begin{proposition}\label{prop:convx}
If $\O$ is convex and $S=(a,b)$ is a connected component of $\partial^c\Opl$, then either $]a,b[\, \cap \Sigma=\emptyset$ or $]a,b[\; \subset \Sigma$.
\end{proposition}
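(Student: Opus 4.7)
The claim is essentially a convexity fact together with the identifications $\Sigma=\partial\Opl\cap\O$ and $S\subset\partial\Opl\subset\overline\O$. I would prove the contrapositive: assume there exists a point $x\in\;]a,b[\,\cap\,\Sigma$ (so in particular $x\in\O$) and show that every other point $y\in\;]a,b[$ lies in $\Sigma$.

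The first step is to locate the endpoints. Since $S\subset\partial^c\Opl\subset\partial\Opl\subset\overline\Opl$ and $\overline\Opl\subset\overline\O$ (as $\Opl\subset\O$), we get $a,\,b\in\overline\O$. For a generic $y\in\;]a,b[\,\setminus\{x\}$ lying on the segment $S$, $y$ is a strict convex combination of $x$ with one of the endpoints $a$ or $b$; say $y=(1-t)x+tb$ for some $t\in\;]0,1[$ (the other case is symmetric).

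The key geometric input is the following standard fact about convex open sets: if $\O\subset\R^2$ is open and convex, $x\in\O$ and $z\in\overline\O$, then $(1-t)x+tz\in\O$ for every $t\in[0,1[$. This follows by picking a ball $B_r(x)\subset\O$, approximating $z$ by points $z_n\in\O$, and noting that $(1-t)B_r(x)+tz_n=B_{(1-t)r}((1-t)x+tz_n)\subset\O$ by convexity; passing to the limit as $z_n\to z$ yields $(1-t)x+tz\in\O$ because this point is the limit of the centers of balls of fixed radius $(1-t)r$ contained in $\O$. Applying this with $z=b$ (or $z=a$) gives $y\in\O$.

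Since $y\in\;]a,b[\subset\partial\Opl$ and $y\in\O$, we conclude $y\in\partial\Opl\cap\O=\Sigma$, and therefore $]a,b[\,\subset\Sigma$ as required. I do not anticipate any real obstacle here: the whole argument reduces to the elementary convexity lemma above, together with the inclusion $S\subset\overline\O$ guaranteed by $\Opl\subset\O$ and $S\subset\partial\Opl$. The convexity of $\Opl$ itself (from hypothesis \textbf{(H)}) is not directly used in this proof — only the convexity of $\O$ matters.
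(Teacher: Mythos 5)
Your proof is correct, and it is arguably cleaner than the one in the paper. Both arguments ultimately rest on the convexity of $\O$, but you package everything into a single elementary lemma (an open convex set contains the half-open segment joining any of its points to any point of its closure), applied with $x\in\;]a,b[\,\cap\,\O$ and with the endpoints $a,b\in[a,b]=\overline S\subset\partial\Opl\subset\overline\O$; this immediately gives $]a,b[\;\subset\O$, hence $]a,b[\;\subset\O\cap\partial\Opl=\Sigma$. The paper instead splits into cases according to the structure of $S$ recorded in Remark \ref{rk.closedness}: when an endpoint of $S$ is missing, that endpoint is the apex of a boundary fan on $\partial\O$ and the paper runs exactly your convexity argument; when $S=[a,b]$, it argues by contradiction, using the convexity of $\Opl$ to produce a point $x$ with $]a,x]\subset\partial\O$ and then the supporting half-plane of $\O$ along the flat piece $]a,x]$ to force $]x,b]\subset\partial\O$, contradicting $]x,b]\subset\Sigma\subset\O$. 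Your single-lemma route subsumes both cases, and, as you correctly observe, it uses neither hypothesis \textbf{(H)} nor the classification of $\partial^c\Opl$ — only the inclusion $\partial\Opl\subset\overline\O$ and the convexity of $\O$ — so the paper's case analysis buys nothing extra for this particular statement.
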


\begin{proof}
Suppose that $]a,b[\, \cap \Sigma\ne \emptyset$ so that, in particular $\Oel $ is not empty. 

Then, either $S=\; ]a,b]$ and $a\in \partial\O$ must be the apex of a boundary fan. Since there is a point in $]a,b] \cap \O$, then,  by convexity of $\O$, $]a,b[\, \subset \O$.  So, $]a,b[\; \subset \O \cap \partial\Opl=\Sigma$.

Or $S=[a,b]$. Assume by contradiction that $]a,b[\, \setminus \Sigma \neq \emptyset$. {By convexity of $\Opl$ and} since $\Sigma \subset \partial\Opl$, there exists $x \in \; ]a,b[$ such that $]a,x] \cap \Sigma=\emptyset$ and $]x,b] \subset \Sigma$. In particular ${]a,x]} \subset \partial\O$. The convexity of $\O$ (hence also of $\overline\O$) shows that $\O$ is contained in one of the open half-spaces $H$ such that $[a,b] \subset \partial H$. But this implies that $]x,b] \subset \partial \O$ which is not possible since $]x,b] \subset \O$.
\end{proof}

\section{Continuity of the solutions}\label{sec.con}

The goal of this section is to establish new continuity results for the solutions $\sigma$ and $u$ of the minimization problem \ref{eq:minim}. Regarding the stress $\sigma$, we show a continuity property in the full domain $\O$ (see Theorem \ref{thm:cont_sigma}) except at two  points at most. Those would be located on the interface  $\Sigma$ between $\Oel$ and $\Opl$ and, at those points,  the normal cone to $\Sigma$ is not degenerate (there are many normals). To this aim, we need to improve the results of \cite{BF} with an accurate account of the behavior of $\sigma$ at characteristic points of $\Sigma$. As for the displacement $u$, we already know that it is smooth (because harmonic) in $\Oel$ and that it cannot jump at all non characteristic points of the interface, as a result of the flow rule and of the convexity of $\Opl$. We improve this property by showing that $u$ cannot jump on the whole of the interface $\Sigma$ and that it is continuous on the portion of $\Opl$ spanned by the characteristic line segments that intersect $\Sigma$ (see Theorem \ref{thm:cont-u}).

\subsection{Continuity of the stress}\label{subsec.con}

In this subsection, we investigate the continuity of $\sigma$ at the interface $\Sigma$ under assumption {\bf (H)}. Recalling Definition \ref{def.dcopl} for  $\partial^c\Opl$ and  \eqref{eq:def-Sigma} for the definition of $\Sigma$, the main result of this section is the following
\begin{theorem}\label{thm:cont_sigma}
Under assumption {\bf (H)}, there exists an exceptional set $\mathcal Z \subset \Sigma \cap \partial^c\Opl$, containing at most two points, such that $\sigma \in \C^0(\O \setminus \mathcal Z;\R^2)$.
\end{theorem}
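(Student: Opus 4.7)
The plan is to establish continuity of $\sigma$ on $\O$ by separately obtaining one-sided continuous extensions of $\sigma$ from $\Oel$ and $\Opl$ up to $\Sigma$, matching them on $\Sigma$ outside a finite set, and finally bounding the cardinality of that set by two. Continuity already holds inside $\Oel$ (where $\sigma=\nabla u$ is harmonic by Lemma \ref{lem:whatinOel}) and inside $\Opl$ (where $\sigma$ is locally Lipschitz by Theorem \ref{thm:struct-sigma-u}), so everything reduces to the interface $\Sigma=\O\cap\partial\Opl$.

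I would first extend $\sigma$ from the plastic side up to every point $x_0\in\Sigma\setminus\partial^c\Opl$. Such an $x_0$ is traversed by a characteristic segment $L_{x_0}\cap\overline\O_{pl}$ along which $\sigma$ is constant, and Theorems \ref{thm:structOp}--\ref{prop:structOp2} place a neighbourhood of $x_0$ (on the plastic side) inside either a boundary fan $\mathbf F_{\hat z}$, where $\sigma$ has an explicit continuous vortex expression, or inside $\overline{\mathbf C}_j\setminus\partial^c\mathbf C_j$ for some convex component $\mathbf C_j$ of $\mathscr C$, where $\sigma$ is continuous by \cite[Theorem 6.22]{BF} (cf.\ Lemma \ref{lem:f_cont}). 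For $x_0$ in the relative interior of a connected component $S$ of $\partial^c\Opl\cap\Sigma$, which is a line segment by Corollary \ref{cor:char-bound}, I would argue that any sequence $x_n\to x_0$ in $\Opl$ must have $\sigma^\perp(x_n)$ converging to the tangent direction of $S$: otherwise, by continuity of $\sigma$ on the adjacent piece of $\mathscr C$, one would produce a genuine characteristic line through $x_0$, contradicting $x_0\in\partial^c\Opl$. Since $S$ is flat, its outward normal $\nu$ is constant along $S$, hence $\sigma(x_n)\to \pm\nu$.

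Next, I would extend $\sigma$ from the elastic side. The normal trace $\sigma\cdot\nu$ is continuous across $\Sigma$ because $\mathrm{div}\,\sigma=0$, so at the relative interior of a flat segment $S\subset\partial^c\Opl\cap\Sigma$ we obtain $|\partial_\nu u|=1$; Lemma \ref{lem.u=cst} then forces $u$ to be constant on $\mathrm{int}(S)$, and an odd reflection of $u-\mathrm{const}$ across the flat segment $S$ produces a harmonic function in a two-sided neighbourhood of $\mathrm{int}(S)$, whence $\sigma=\nabla u$ extends continuously from $\Oel$ up to $\mathrm{int}(S)$. At the other points $x_0\in\Sigma\setminus\partial^c\Opl$, the plastic-side trace is already continuous and standard boundary regularity for bounded harmonic functions whose boundary values are continuous gives the corresponding elastic extension. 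Wherever both one-sided limits exist, they must coincide since $\sigma\in H^1_{\mathrm{loc}}(\O;\R^2)$ admits a single trace on $\Sigma$.

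The exceptional set $\mathcal Z$ is then reduced to those endpoints of the (at most two, by Corollary \ref{cor:char-bound}) components of $\partial^c\Opl$ that lie in $\Sigma$ and at which $\partial\Opl$ admits more than one outward normal (so neither the tangential argument nor the reflection argument applies). By Remark \ref{rk.closedness}, every endpoint is either a fan apex (hence on $\partial\O$, not in $\Sigma$) or lies in $\partial^c\Opl$ itself, and a combinatorial argument based on the convexity of $\Opl$ together with Theorem \ref{prop:structOp2} and Proposition \ref{prop:struct-fans} will bound the number of genuine corner points by two. The main obstacle will be the transfer of continuity from the elastic side across the flat segment $S$: converting the rigidity $|\partial_\nu u|=1$ into a usable boundary condition requires first Lemma \ref{lem.u=cst} to reduce matters to a pure Dirichlet condition, and then a careful reflection together with interior harmonic regularity to upgrade $\nabla u$ to a continuous extension up to $\mathrm{int}(S)$, all while ensuring that the two one-sided limits indeed coincide pointwise and not merely $\HH^1$-almost everywhere.
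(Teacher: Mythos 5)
Your overall architecture is the same as the paper's: interior continuity on $\Oel$ and $\Opl$, one-sided continuous extensions up to $\Sigma$, matching of the two extensions through the single $H^1_{\rm loc}$ trace, and confinement of the bad set to $\Sigma\cap\partial^c\Opl$; your Schwarz reflection across a flat component of $\partial^c\Opl$ is a legitimate variant of the paper's uniform treatment, which instead feeds the continuous plastic-side trace as Dirichlet data for the componentwise-harmonic $\sigma$ in $\Oel$ and invokes \cite[Theorem 8.30]{GT}. There are, however, two genuine gaps. The first is in the plastic-side extension at $x_0\in\Sigma\setminus\partial^c\Opl$: you claim that a plastic-side neighbourhood of $x_0$ lies inside a single boundary fan or a single $\overline{\mathbf C}_j$, and that is false in general. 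By Remark \ref{rem:bdryfancc} the pieces of the decomposition \eqref{eq:omegapl} can accumulate at $L_{x_0}\cap\Opl$ — for instance when $L_{x_0}\cap\Opl$ is one of the components $L_{x_\lambda}\cap\Opl$, $\lambda\in\Lambda$, which is a Hausdorff limit of fan boundaries — so every plastic-side neighbourhood of $x_0$ meets infinitely many structural pieces and the citation of \cite[Theorem 6.22]{BF} piece by piece does not glue. Continuity at such $x_0$ requires the sequential argument of Proposition \ref{prop:cont}, Step 1: one shows that the characteristic segments $L_{x_n}\cap\overline\O_{pl}$ through $x_n\to x_0$ Hausdorff-converge to a non-degenerate segment through $x_0$ (Lemma \ref{lem:char-bound} is what excludes the degenerate limit, and a separate argument excludes a limit segment lying entirely in $\partial\Opl$), whence $\sigma(x_n)\to\sigma(x_0)$.

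The second gap concerns the identification of $\mathcal Z$ and the bound by two. You take $\mathcal Z$ to consist of endpoints of components of $\partial^c\Opl$ in $\Sigma$ at which the normal cone of $\Opl$ is non-trivial; with two non-degenerate components this could be four points, and no combinatorial argument will reduce it to two since nothing prevents all four endpoints from being corners of $\partial\Opl$. The correct dichotomy (Lemmas \ref{lem:segment} and \ref{lem:single_point}, Example \ref{ex.tr}) is different: at an endpoint $a$ of a \emph{non-degenerate} component $[a,b]$, the characteristic segments through nearby interior points still Hausdorff-converge to the whole of $[a,b]$ by maximality of that segment, so their directions converge to the direction of $[a,b]$ and $\sigma$ extends continuously by $\pm\nu$ \emph{regardless} of the normal cone at $a$. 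The only genuinely exceptional points are the \emph{singleton} components of $\partial^c\Opl$ at which the normal cone is not a single direction (there the limit segment degenerates to a point and the limiting direction is undetermined, cf.\ Example \ref{ex.tr}); Corollary \ref{cor:char-bound} caps the number of components, hence of such singletons, at two. You also leave unproved the continuity at singleton components with a well-defined normal, which requires the separate argument of Lemma \ref{lem:single_point} rather than either of the two mechanisms you describe.
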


We first show a partial continuity property of $\sigma$ in $\O \setminus \partial^c\Opl$ which will be  improved at a later stage.

\begin{proposition}\label{prop:cont}
The function $\sigma$ is continuous in $\O \setminus \partial^c\Opl$.
\end{proposition}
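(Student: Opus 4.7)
The plan is to split the argument according to whether $x_0 \in \O \setminus \partial^c\Opl$ lies in the interior of one of $\Oel$ and $\Opl$, or on the interface $\Sigma \setminus \partial^c\Opl$. In $\Oel$, $\sigma = \nabla u$ is smooth by Lemma \ref{lem:whatinOel}, and in $\Opl$, $\sigma$ is locally Lipschitz by Theorem \ref{thm:struct-sigma-u}; so it remains to treat the boundary case. Fix $x_0 \in \Sigma \setminus \partial^c\Opl$: by definition of $\partial^c\Opl$ there is some $y_0 \in \Opl$ with $x_0 \in L_{y_0}$, hence $x_0$ is an endpoint of the closed segment $L_{y_0} \cap \overline{\Opl}$.

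First, I would show that $\sigma\vert_{\Opl}$ admits a continuous extension to $x_0$, taking some value $\xi_0$ with $|\xi_0|=1$. This rests on the decomposition $\Opl = \bigcup_i \mathbf F_i \cup \bigcup_\lambda (L_{x_\lambda}\cap\Opl) \cup \bigcup_j \mathbf C_j$ of Theorem \ref{thm:structOp}, combined with the continuity of $\sigma$ on each $\overline{\mathbf C_j} \setminus \partial^c \mathbf C_j$ (via \cite[Theorem 6.22]{BF}) and on each closed boundary fan minus its apex through the explicit vortex formula $\sigma(x) = \pm (x-\hat z_i)^\perp/|x-\hat z_i|$; note that $x_0 \in \O$ excludes $x_0$ from being an apex $\hat z_i \in \partial\O$. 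Although, by Remark \ref{rem:bdryfancc}, $x_0$ may lie in the closure of several of these geometric pieces simultaneously, the continuity of $\sigma$ throughout the convex open set $\Opl$, together with the geometric convergence of nearby characteristic segments to $L_{y_0}$, forces those a priori distinct limits to coincide.

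Next, I would handle the elastic side. Since $\sigma \in H^1_{\rm loc}(\O;\R^2)$ and $\Sigma$ is locally Lipschitz (as the boundary of the convex open set $\Opl$), the two $H^{1/2}$-traces of $\sigma$ on $\Sigma$ from $\Oel$ and from $\Opl$ must coincide; since the $\Opl$-side trace matches the classical limit just constructed, the common trace $\sigma_\Sigma$ is continuous at $x_0$ with value $\xi_0$. In $\Oel$ each component $\sigma_i = \partial_i u$ is bounded and harmonic (as $u$ is), and $\Oel$ is locally a Lipschitz domain near $x_0$, hence Wiener-regular for the Dirichlet problem. A classical barrier argument then yields that each $\sigma_i$ extends continuously to $x_0$ from inside $\Oel$ with value $\xi_{0,i}$. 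Combining both sides, $\sigma$ is continuous at $x_0$.

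The main obstacle is the first step: producing a single well-defined limit $\xi_0$ at those points $x_0$ where several fans, components and isolated characteristic lines of $\Opl$ cluster. This is precisely where the fine geometric analysis carried out in \cite{BF} -- as recalled by Theorems \ref{thm:structOp} and \ref{prop:structOp2}, Proposition \ref{prop:struct-fans} and Remark \ref{rem:bdryfancc} -- plays the decisive role.
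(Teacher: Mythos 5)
Your overall architecture is the same as the paper's: treat the plastic side first by extending $\sigma\vert_{\Opl}$ continuously to $x_0\in\Sigma\setminus\partial^c\Opl$ as the constant value on the unique characteristic $L_{x_0}$, then transfer to the elastic side by observing that the $H^{1/2}$-trace of $\sigma$ on $\Sigma$ is this same continuous function and solving the Dirichlet problem for the harmonic field $\sigma=\nabla u$ in the Lipschitz domain $\Oel$ (your Wiener-regularity/barrier argument is essentially the paper's appeal to elliptic regularity up to a boundary satisfying the exterior cone condition). The elastic step is therefore fine.

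The genuine gap is in the first step, and you have in fact flagged it yourself without closing it. The assertion that ``the geometric convergence of nearby characteristic segments to $L_{y_0}$ \dots forces those a priori distinct limits to coincide'' is precisely the statement that must be proved; continuity of $\sigma$ \emph{inside} the open set $\Opl$ gives no control at the boundary point $x_0$, and the decomposition of Theorem \ref{thm:structOp} does not by itself prevent infinitely many fans, convex components and isolated characteristics from accumulating at $x_0$ with oscillating directions (Example \ref{ex.tr} shows exactly this pathology at a point of $\partial^c\Opl$). What is needed, and what the paper supplies, is a compactness argument on the segments themselves: for $x_n\to x_0$ the closed segments $S_n=L_{x_n}\cap\overline\O_{pl}=[a_n,b_n]$ Hausdorff-converge (up to subsequences) to a segment $[a,b]$ with $x_0\in\{a,b\}$; one must then show (i) $a\neq b$, which the paper obtains from a monotonicity of the segments forced by Lemma \ref{lem:char-bound} and the non-intersection of characteristics, and (ii) that $[a,b]$ cannot collapse into $\partial\Opl$, since otherwise $]a,b[\;\subset\partial^c\Opl$ and Remark \ref{rk.closedness} would make $x_0$ the apex of a boundary fan, impossible for $x_0\in\O$. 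Only then does $]a,b[\;\subset\Opl$ identify the limit line with $L_{x_0}$ and allow the constancy of $\sigma$ along characteristics, combined with interior continuity, to yield $\sigma(x_n)\to\sigma(x_0)$ independently of the subsequence. Without steps (i) and (ii) your single well-defined limit $\xi_0$ is not established.
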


\begin{proof}
{\sf Step 1.} We first show that $\sigma \in \mathcal C^0((\Opl\cup \Sigma) \setminus \partial^c\Opl;\R^2)$. It suffices to consider points $x_0 \in \Sigma \setminus \partial^c\Opl=\partial \Opl\cap\O \setminus \partial^c \Opl$ since we already know, by \cite[Theorem 5.1]{BF}, that $\sigma$ is locally Lipschitz continuous inside $\Opl$. First we note that $x_0$ cannot be the apex of a boundary fan because it is inside $\O$. So there is exactly one characteristic line $L_{x_0}$ passing through $x_0$ and, 
as in \cite{BF},  we define $\sigma(x_0)$ as the constant value of $\sigma$ along that characteristic line. We now prove that such an extension is continuous at $x_0$. The proof below is nearly identical to that of \cite[Theorem 6.22]{BF}, but it has to incorporate the case of {boundary} fans which were not in the original proof, which is why we detail it below.

Let $\{x_n\}_{n \in \N}$ be a sequence in $(\Opl\cup\Sigma)\setminus \partial^c \Opl$ such that $x_n \to x_0$. The point $x_n$ is then crossed by a {unique} characteristic line $L_{x_n}$ intersecting $\partial\Opl$ at two points $a_n$ and $b_n$ with, up to a subsequence, $a_n \to a$ and $b_n \to b$. As a consequence, the closed segment $S_n:=L_{x_n}\cap \overline \O_{pl} =[a_n,b_n]$ converges in the sense of Hausdorff to the segment $S=[a,b]$. In particular, since $x_n \in S_n$, then $x_0 \in S$. Moreover, since $a$, $b$, $x_0 \in \partial\Opl$ and $]a,b[$ is an open (possibly empty) line segment inside the convex set $\Opl$, $x_0$ must coincide with either $a$ or $b$. We assume without loss of generality that $x_0=a$. 
{By convexity of $\Opl$, there exists $y_0 \neq x_0$ with $y_0 \in \partial\Opl \cap L_{x_0}$. Let us denote by $H$ one of  half-planes with $\partial H=L_{x_0}$. Up to a further subsequence, we can assume that $S_n \subset H$ for all $n \in \N$. Since $a_n \to a=x_0$, it follows that for yet another subsequence, $a_{n+1}$ belongs to the arc $\mathcal C_n$ in $\partial\Opl$ joining $a_n$ and $x_0$. According to Lemma \ref{lem:char-bound}, for all $n \in \N$, the points $b_{n+1} \not\in \mathcal C_n$ otherwise $\Opl$ would contain the three characteristics line segments $L_{x_0}\cap \Opl$, $S_n=L_{x_n} \cap \Opl$ and $S_{n+1}=L_{x_{n+1}} \cap \Opl$, which is impossible. Moreover, since $S_n$ and $S_{n+1}$ cannot intersect in $\Opl$ by \cite[Proposition 5.5]{BF}, we deduce by convexity of $\Opl$ that either $|a_{n+1}-b_{n+1}| > |a_n-b_n|$ for all $n \in \N$, or $|a_{n+1}-b_{n+1}|\ge |x_0-y_0|$ for all $n \in \N$.  In both cases, we conclude that} $a \neq b$.

\medskip

{\sf Case I:} Assume  that there exists $\delta>0$ such that for all $n \in \N$,
$$\max_{z \in S_n} {\rm dist}(z,\partial\Opl) \geq \delta,$$
then there exists $z_n \in S_n$ such that ${\rm dist}(z_n,\partial\Opl) \geq \delta$ and, up to a subsequence, $z_n \to z$ for some $z \in S$ with ${\rm dist}(z,\partial\Opl) \geq \delta$. Since $x_n \in L_{z_n}$, there exists $\theta_n \in \R$ such that
$$x_n=z_n+\theta_n\sigma^\perp(z_n).$$
Note that, up to a further subsequence, $\theta_n \to \theta \in \R$ and thus, by continuity of $\sigma$ in $\Opl$, we have $x_0=z+\theta\sigma^\perp(z)$ which ensures that $x_0 \in L_z$. Thus, using that $\sigma$ is constant along characteristics and, once again, the continuity of $\sigma$ in $\Opl$, we get that
$$\sigma(x_n)=\sigma(z_n) \to \sigma(z)=\sigma(x_0).$$

\medskip

{\sf Case II:} Assume next that, for some subsequence,
$$\max_{z \in S_n} {\rm dist}(z,\partial\Opl) \to 0.$$
By Hausdorff convergence, for all $z \in S$, there exists a sequence $\{z_n\}_{n \in \N}$ with $z_n \in S_n$ and $z_n \to z$. Thus, ${\rm dist}(z_n,\partial\Opl) \to 0$ which ensures that $S \subset \partial \Opl$. Then $S=[a,b]$ is a closed line segment contained in $\partial\Opl$. If there exists $y \in \; ]a,b[ \; \setminus \partial^c\Opl$, the characteristic line $L_y$ must intersect $S_n$ in $\Opl$ for $n$ large enough because $S_n$ Hausdorff-converges to $[a,b]$. As $S_n=L_{x_n} \cap \overline \O_{pl} $, this is not possible according to \cite[Proposition 5.5]{BF}. {Thus $]a,b[\; \subset\partial^c\Opl$ and $x_0 =a \not\in \partial^c\Opl$. Since $]a,b[\, \neq \emptyset$,} according to Remark \ref{rk.closedness},  $x_0$ must be the apex of a boundary fan, which cannot be so because $x_0\in\O$. This second case never occurs.

\medskip

{\sf Step 2.} We now show that $\sigma \in \mathcal C^0((\Oel\cup \Sigma) \setminus \partial^c\Opl;\R^2)$. 
Note that $\Sigma\setminus\partial^c\Opl$ is relatively open in $\Sigma$  in view of Remark \ref{rk.closedness}. {Also, since $\sigma \in H^1_{\rm loc}(\O;\R^2)$, $\sigma$ has a trace $g$ on $\Sigma$ which belongs to $H^{1/2}_{\rm loc}(\Sigma;\R^2)$.  Using now that $\sigma \in \C^0((\Opl \cup \Sigma) \setminus \partial^c \Opl;\R^2)$, we thus recover that 
\begin{equation}\label{cont.sigma}
g(x)=\sigma(x) \quad \text{ for $\HH^1$-a.e. $x \in \Sigma \setminus \partial^c\Opl$,}
\end{equation}
hence $g$ (has a representative which) is continuous on $\Sigma \setminus \partial^c\Opl$.} Since $u$ is harmonic in the (open) elastic region $\Oel$,  $u \in \C^\infty(\Oel)$. By convexity of $\Opl$, $\Sigma$ is locally the graph of a Lipschitz function. Let $B$ be a ball centered at $x_0 \in \Sigma \setminus \partial^c\Opl$ such that $\overline B \subset \O$ and $B \cap \Sigma=B\cap\partial \Oel=B\cap\partial\Opl \setminus \partial^c \Opl$. Recalling that $\sigma=\nabla u$ in $\Oel$, we get that $\sigma \in H^1(B \cap \Oel;\R^2)$ is a solution of
$$
\begin{cases}
\Delta \sigma=0 & \text{ in } B \cap \Oel,\\
\sigma=g & \text{ on }B \cap \partial\Oel,
\end{cases}
$$
with $g \in \C^0(B \cap \partial \Oel;\R^2)$. As a consequence, if $\p\in \C_c^\infty(\R^2;[0,1])$ is a cut-off function such that $\varphi=1$ in a neighborhood of $x_0$ and ${\rm Supp}(\varphi)\subset B$, then ${\tilde \sigma}:=\p \sigma \in H^1(B \cap \Oel;\R^2)$ satisfies
\begin{equation}\label{eq.truncation}
\begin{cases}
{\Delta \tilde \sigma}=\tilde f & \text{ in } B \cap \Oel,\\
{\tilde \sigma}=\tilde g & \text{ on }\pa (B \cap \Oel),
\end{cases}
\end{equation}
where $\tilde f:=\sigma\Delta \p +2(\nabla\sigma)\nabla\p$ and $\tilde g=\varphi g$ in $\partial\Oel \cap B$ and $\tilde g=0$ in $\partial B \cap \Oel$. Note that $\tilde g$ is continuous on $\pa (B \cap \Oel)$ and $\tilde f \in L^\infty(B \cap\Oel;\R^2)+L^2(B \cap \Oel;\R^2)$. Moreover, since $B \cap \partial\Oel$ is locally the graph of a Lipschitz function, then $B \cap \Oel$ is an open set with Lipschitz boundary which thus satisfies the (exterior) cone condition. Applying {\it e.g.} \cite[Theorem 8.30]{GT}, we conclude that $\tilde \sigma$ is continuous in $\overline{B \cap \O}_{el}$. Since $\p\equiv 1$ in a neighborhood of $x_0$, $\sigma$ must then be continuous at $x_0$ and varying $x_0$ in $\Sigma\setminus\partial^c\Opl$ we conclude that 
\begin{equation}\label{eq.cont.sig--el-reg}
\sigma \in \mathcal C^0((\Oel \cup \Sigma) \setminus \partial^c\Opl;\R^2).
\end{equation}

\medskip

{\sf Step 3.} Since $\sigma \in H^1_{\rm loc}(\O;\R^2)$, $\sigma \in \mathcal C^0((\Oel\cup \Sigma) \setminus \partial^c\Opl;\R^2)\cap \mathcal C^0(( \Opl\cup \Sigma) \setminus \partial^c\Opl;\R^2)$, we deduce that $\sigma \in \mathcal C^0(\O \setminus \partial^c\Opl;\R^2)$.
\end{proof}

\begin{remark}\label{rem.cont-bdary} The result of Proposition \ref{prop:cont} can be extended with the same argument as in the first step to $\overline\O \setminus (\partial^c\Opl\cup(\pa\Opl\cap \mathcal F))$ where $\mathcal F:=\bigcup_{i \in I}\{\hat z_i\}$ and $\hat z_i \in \partial \Opl \cap \partial \O$ is the apex of the boundary fan $\mathbf F_{\hat z_i}$ {in Theorem \ref{thm:structOp}.}\hfill\P
\end{remark}

We next wish to improve the previous result by establishing that $\sigma$ is also continuous across non-degenerate connected components of $\partial^c\Opl$ in $\Sigma$. According to \eqref{eq:omegapl}, if $x_0 \in \Sigma \cap \partial^c\Opl$ then either $x_0 \in \partial^c \Opl \cap \Sigma \cap \partial \mathbf F_{\hat z_i}$ for some {$i \in I$}, or $x_0 \in \partial^c\Opl \cap \Sigma \cap \partial \mathbf C_j$ for some {$j \in J$}. By \cite[Theorem 6.2]{BF}, we already know that $\sigma|_{\Opl} \in \C^\infty(\overline{\mathbf F}_{\hat z_i} \setminus \{\hat z_i\};\R^2)$. In particular, since $\hat z_i$ cannot belong to $\Sigma \subset\O$, then $\sigma|_{\Opl}$ is continuous in $\mathbf F_{\hat z_i} \cup (\Sigma \cap \partial \mathbf F_{\hat z_i})$. 

We now extend this property to connected components $\mathbf C$ of $\mathscr C$ with nonempty interior.
We already know that $\sigma$ is continuous in $\overline{\mathbf C}\setminus \partial^c \mathbf C$ thanks to \cite[Theorem 6.22]{BF}. We improve this result in the case where $\partial^c \mathbf C$ is not reduced to a single point. 

\begin{lemma}\label{lem:segment}
Let $\mathbf C$ be a connected component of $\mathscr C$ with nonempty interior defined in \eqref{def:C} such that
$S=\partial^c\mathbf C=\partial\mathbf C \cap \partial^c\Opl=[a,b]$ with $a\ne b$. 
Then $\sigma$ is continuous in $\overline{\mathbf C}$.
\end{lemma}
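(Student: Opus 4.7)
The plan is as follows. By \cite[Theorem 6.22]{BF} we already have $\sigma\in\C^0(\overline{\mathbf C}\setminus S;\R^2)$, so the entire task is to extend $\sigma$ continuously across each point $x_0\in S=[a,b]$ from inside $\mathbf C$. I would study Hausdorff limits of the characteristic segments issued from sequences $x_n\to x_0$ with $x_n\in \mathbf C$, and combine the convex structure of $\Opl$ with the defining property of the characteristic boundary $\partial^c\Opl$ to pin down a unique limit for $\sigma(x_n)$.

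First I would observe that, given $x_n\to x_0$ in $\mathbf C$, each $x_n$ lies on a unique characteristic line $L_{x_n}$ with $L_{x_n}\cap\overline\Opl=[a_n,b_n]$, where $a_n\in\Gamma_1$ and $b_n\in\Gamma_2$ by Theorem \ref{prop:structOp2}(i). Passing to a subsequence, $\sigma(x_n)\to\sigma^*\in\mathbb S^1$, $a_n\to a^*\in\overline{\Gamma_1}$, $b_n\to b^*\in\overline{\Gamma_2}$, and $[a_n,b_n]$ Hausdorff-converges to $[a^*,b^*]\subset\overline\Opl$, which contains $x_0$. Since $\Gamma_1$ and $\Gamma_2$ are the two components of $\partial\Opl\cap\partial\mathbf C$ separated by $S$, one has $\overline{\Gamma_1}\cap\overline{\Gamma_2}\subset\{a,b\}$; this excludes $a^*=b^*$, either because $x_0\in\;]a,b[$, or, at $x_0=a$ (resp. $x_0=b$), because $a\notin\overline{\Gamma_2}$ (resp. $b\notin\overline{\Gamma_1}$) keeps $|a_n-b_n|$ bounded away from $0$. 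Consequently $(\sigma^*)^\perp$ gives the direction of the non-degenerate limit segment $[a^*,b^*]$.

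Next I would show that $[a^*,b^*]\subset\partial\Opl$: if some interior point $y$ of $[a^*,b^*]$ belonged to $\Opl$, continuity of $\sigma$ inside $\Opl$ together with its constancy along characteristics would give $\sigma(y)=\sigma^*$, so $L_y\supset[a^*,b^*]\ni x_0$, contradicting $x_0\in\partial^c\Opl$. Convexity of $\Opl$ then forces the direction of $[a^*,b^*]$ to be that of $b-a$: at interior points of $S$ this is the only flat direction of $\partial\Opl$; at the endpoints $a$ or $b$, any competing flat direction of the corner would force $b^*$ (resp. $a^*$) to belong to both $\overline{\Gamma_1}$ and $\overline{\Gamma_2}$, hence by the disjointness above to the set $\{a,b\}$, and the limit segment again coincides with $S$. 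Thus $\sigma^*\perp(b-a)$ in every case, and Lemma \ref{lem:orientation} applied to the opposite characteristic line $L\subset\partial\mathbf C$ produces a constant half-plane orientation of $\sigma$ throughout $\mathring{\mathbf C}$ that passes to the limit, thereby selecting the sign of $\sigma^*\in\{\pm(b-a)^\perp/|b-a|\}$ uniquely.

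The limit $\sigma^*$ is therefore independent of the extracted subsequence and of the approximating sequence $\{x_n\}$, so $\sigma$ admits a continuous extension at $x_0$; combined with the continuity already known on $\overline{\mathbf C}\setminus S$, this yields $\sigma\in\C^0(\overline{\mathbf C};\R^2)$. The main obstacle will be the alignment of the Hausdorff limit segment with $S$ at its two endpoints $a$ and $b$, where $\partial\Opl$ may display corner behavior; ruling out that $[a^*,b^*]$ runs along a competing flat piece of $\overline{\Gamma_1}$ or $\overline{\Gamma_2}$ rather than along $S$ itself is precisely where the detailed structure of $\partial\mathbf C$ from Theorem \ref{prop:structOp2} and the disjointness property $\overline{\Gamma_1}\cap\overline{\Gamma_2}\subset\{a,b\}$ are essential.
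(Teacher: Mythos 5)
Your proposal is correct and follows essentially the same route as the paper: pass to Hausdorff limits of the characteristic segments $[a_n,b_n]$ through $x_n\to x_0$, show the limit segment is non-degenerate, lies in $\partial\Opl$ and is collinear with $S$, so that $\sigma(x_n)\to\pm\nu$, and then fix the sign via Lemma \ref{lem:orientation}. The only step where the paper's argument is tighter is the alignment of the limit segment with $S$ (including at the corners $a$, $b$), which the paper settles by invoking the maximality of $[a,b]$ as a segment of $\partial\mathbf C$ (\cite[Proposition 6.8-(ii)]{BF}) rather than the separation property of $\overline{\Gamma_1}$ and $\overline{\Gamma_2}$ that you gesture at; your version can be completed but would need the additional observation that the competing boundary arc would have to contain $L\cap\overline\O_{pl}$, which is impossible since $L\cap\Opl\neq\emptyset$.
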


\begin{proof}
Recall that, by \cite[Lemma 6.19]{BF} (see also Proposition \ref{prop:structOp2}), $\partial \mathbf C \cap \partial \Opl \setminus \partial^c \mathbf C$ has two connected components $\Gamma_1$ and $\Gamma_2$. We will assume that, {\it e.g.}, $a\in \partial\Gamma_1$,  $b\in \partial\Gamma_2$. Furthermore, all characteristic lines that intersect $\partial \mathbf C\setminus \partial^c \mathbf C$ must intersect both $\Gamma_1$ and $\Gamma_2$. 
Also, {$\partial \mathbf C \cap \overline \O_{pl}  \supset L \cap \overline \O_{pl} $} for some characteristic line $L$. According to Lemma \ref{lem:orientation}, we can assume without loss of generality that
\begin{equation}\label{eq:orient}
\mathring{\mathbf C}=\{x \in \mathring{\mathbf C} : \;  \sigma(x)\cdot(y-x) < 0 \text{ for all }y \in L\cap \overline \O_{pl} \},
\end{equation}
the other case being identical. We claim that $\sigma$ extends by continuity to $S$ by setting $\sigma:=\nu$ on $S$ {where $\nu$ is the (constant) outer unit normal so $\mathbf C$ on the segment $S$.}
\medskip

{\sf Case I:} Take $x \in \; ]a,b[$, and let $\{x_n\}_{n \in \N}$ be a sequence in $\overline{\mathbf C}$ such that $x_n \to x$. 

If  $x_n \in \mathring{\mathbf C}$ for $n$ large enough, then  the characteristic line $L_{x_n}$ intersects $\partial \mathbf C$ at two points $a_n \in \Gamma_1$ and $b_n \in \Gamma_2$. Up to a (not relabeled)  subsequence $a_n \to a'\in\overline\Gamma_1$, $b_n \to b'\in\overline\Gamma_2$ and the segment $[a_n,b_n]$ converges in the sense of Hausdorff to the segment $[a',b']$. {Moreover, exactly as in Step 1, Case II of the proof of Proposition \ref{prop:cont}, $]a',b'[ \; \subset\pa^c\Opl$}. Since $x_n \in [a_n,b_n]$ and $x_n \to x$, we deduce that $x \in [a',b']$. As $x$, $a$, $b$, $a'$ and $b' \in \partial\mathbf C$ and $\mathbf C$ is convex, we get that $[a,b] \subset [a',b'] \subset \partial\mathbf C$. Using that $[a,b]$ is maximal {(see \cite[Proposition 6.8-(ii)]{BF})} we obtain that $a'=a$ and $b'=b$. 

Since $\sigma(x_n)$ is orthogonal to $L_{x_n}\supset [a_n,b_n]$, it follows that any limit $\xi$ of $\sigma(x_n)$ must be orthogonal to $[a,b]$, hence $\xi=\e\nu$ for some $\e=\pm 1$ possibly depending on the {subsequence of $\{x_n\}_{n\in \N}$}. Recalling \eqref{eq:orient}, $\sigma(x_n)\cdot (y-x_n)<0$ for all $y \in L \cap \overline \O_{pl}$ and all $n \in \N$. Passing to the limit yields $\e\nu \cdot (y-x)\leq 0$ for all $y \in L \cap \overline \O_{pl} $. By convexity of $\mathbf C$ we must have that $\e=1$ {so that $\xi=\nu$ is independent of the subsequence}. Thus we can extend $\sigma$ by continuity to $]a,b[$ with value $\nu$ on $S$. 

Otherwise, for a (not relabeled) subsequence $x_n \in \partial \mathbf C$, and for $n$ large enough, $x_n \in \; ]a,b[$ (we use here that $x \in \; ]a,b[$). Since the extension of $\sigma$ to $S$ is precisely equal to $\nu$ on $]a,b[$, we get that $\sigma(x_n)=\sigma(x)=\nu$. Thus $\sigma$ extends by continuity to $]a,b[$ by setting $\sigma:=\nu$.

\medskip

{\sf Case II:} Assume then that $x=a$ (the case $x=b$ can be treated similarly). Let  $\{x_n\}_{n \in \N}$ be a sequence in $\overline{\mathbf C}$ such that $x_n \to a$. If $x_n \in \partial \mathbf C$ for $n$ large enough, it suffices to consider the case where $x_n \in \Gamma_1$ because $\sigma=\nu$ is constant on $]a,b[$. As $x_n \in \Gamma_1$, then $x_n$ is traversed by a characteristic line $L_{x_n}$ which also intersect $\Gamma_2$ at some point $b_n$. Arguing as in the previous case, we infer that $b_n \to b$ and thus that $[a_n,b_n]$ converges in the sense of Hausdorff to the segment $[a,b]$. On the other hand, if $x_n \in \mathring{\mathbf C}$, then the characteristic line $L_{x_n}$ must intersect $\partial \mathbf C$ at two points $a_n \in \Gamma_1$ and $b_n \in \Gamma_2$ which satisfy, up to a subsequence $a_n \to a$, $b_n \to b$ and the segment $[a_n,b_n]$ converges in the sense of Hausdorff to the segment $[a,b]$.

So, in both cases we are back to the setting of Case I and we conclude that $\sigma(x_n) \to \nu$ for $x_n \to x$. Thus $\sigma$ extends by continuity to the full closed segment $S=[a,b]$ by setting $\sigma:=\nu$ on $S$.

\medskip 

Note that, if instead of \eqref{eq:orient}, we have $\mathring{\mathbf C}=\{x \in \mathring{\mathbf C} : \;  \sigma(x)\cdot(y-x) > 0 \text{ for all }y \in L\cap \overline \O_{pl} \}$ (see Lemma \ref{lem:orientation}), then $\sigma$ will extend by continuity to $S$ upon setting $\sigma:=-\nu$ on $S$.
\end{proof}

{\begin{remark}\label{rem:sigma-segment}
According to the proof of Lemma \ref{lem:segment}, {Theorem \ref{prop:structOp2}, Proposition \ref{prop:struct-fans}} and \cite[Theorem 6.2]{BF}, we get that, if $S$ is a non degenerate connected component of $\partial^c\Opl$, then $\sigma=\e\nu$ on $S$ where $\e=\pm 1$ and $\nu$ is a (constant) unit normal to $S$.
\end{remark}}

Finally we partially extend the continuity of $\sigma$ to the degenerate connected components of $\partial^c \Opl$ with a well-defined normal.

\begin{lemma}\label{lem:single_point}
Let $\mathbf C$ be a connected component of $\mathscr C$ with nonempty interior defined in \eqref{def:C} such that
$$S=\partial^c\mathbf C=\partial\mathbf C \cap \partial^c\Opl=\{a\}$$
for some $a$, and such that $\mathbf C$ has a well-defined outer unit normal $\nu(a)$ at $a$. Then $\sigma$ is continuous in $\overline{\mathbf C}$.\end{lemma}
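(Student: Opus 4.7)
The plan is to mimic the proof of Lemma \ref{lem:segment}: extend $\sigma$ continuously to $a$ by the value $\e\nu(a)$, where $\e\in\{\pm 1\}$ is fixed by the orientation of $\sigma$ in $\mathring{\mathbf C}$ via Lemma \ref{lem:orientation}. Since $\sigma$ is already continuous on $\overline{\mathbf C}\setminus\{a\}$ by \cite[Theorem 6.22]{BF}, it suffices to verify that $\sigma(x_n)\to\e\nu(a)$ for every sequence $\{x_n\}\subset\overline{\mathbf C}$ with $x_n\to a$.

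First I would treat $x_n\in\mathring{\mathbf C}$. Writing $[a_n,b_n]:=L_{x_n}\cap\overline{\Opl}$ with $a_n\in\overline{\Gamma_1}$ and $b_n\in\overline{\Gamma_2}$, one has $[a_n,b_n]\subset\overline{\mathbf C}$ as in the proof of Lemma \ref{lem:segment}; up to a subsequence, $a_n\to a'$, $b_n\to b'$ with $a\in[a',b']$. Since $\sigma(x_n)\perp L_{x_n}$, the task reduces to showing $(b_n-a_n)/|b_n-a_n|\to\pm\nu(a)^\perp$. If $a'=a=b'$ (the collapsing case), $a_n$ and $b_n$ approach $a$ from the two opposite arcs $\overline{\Gamma_1},\overline{\Gamma_2}$ of $\partial\mathbf C$, and convexity of $\mathbf C$ together with the existence of a well-defined outer normal $\nu(a)$ at $a$ yields differentiability of $\partial\mathbf C$ at $a$; consequently, such chords have directions converging to the tangent $\pm\nu(a)^\perp$. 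If instead, WLOG, $a'=a\ne b'$, convexity of $\Opl$ forces either $]a,b'[\,\subset\Opl$, in which case the midpoints $y_n:=(a_n+b_n)/2\in\Opl$ converge to $y_\infty:=(a+b')/2\in\Opl$ and continuity of $\sigma$ on $\Opl$ yields $\sigma(y_\infty)\perp(b'-a)$, so the characteristic $L_{y_\infty}$ is the line through $a$ and $b'$ --- whence $a\in L_{y_\infty}$ with $y_\infty\in\Opl$, contradicting $a\in\partial^c\Opl$; or $]a,b'[\,\subset\partial\Opl$, in which case $[a,b']\subset\partial\Opl\cap\overline{\mathbf C}\subset\partial\mathbf C$ and the well-defined normal at $a$ forces $(b'-a)\perp\nu(a)$, again giving the desired limit.

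The remaining possibilities reduce to the above: for $x_n\in\Gamma_1\cup\Gamma_2$, I would run the same three-case analysis on the characteristic $L_{x_n}$ using the homeomorphism of Lemma \ref{lem:f_cont}; for $x_n$ lying on the bounding characteristic segment $L\subset\partial\mathbf C$ (possible only when $a$ is an endpoint of $\overline{L}$), the well-defined normal at $a$ forces $L$ to be tangent to $\partial\mathbf C$ at $a$, hence $\sigma(x_n)=\pm\nu(a)$ identically by constancy of $\sigma$ along $L$ and $\sigma\perp L$. The main difficulty will be the collapsing case: in Lemma \ref{lem:segment} the non-degenerate segment structure of $\partial^c\mathbf C$ pinned down the limiting chord direction for free, while here it is precisely the differentiability of $\partial\mathbf C$ at $a$ --- equivalent to the well-defined outer normal hypothesis --- that must supply it, explaining why the single-point case requires this additional regularity.
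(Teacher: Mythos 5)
Your proof is correct and follows the same overall architecture as the paper's (extend $\sigma$ to $a$ by $\e\nu(a)$, verify sequential convergence for all sequences in $\overline{\mathbf C}$, and pin down $\e$ via Lemma \ref{lem:orientation} by passing to the limit in the sign condition against $L\cap\overline\O_{pl}$), but the engine is different. The paper treats boundary sequences $x_n\in(\partial\mathbf C\cap\partial\Opl)\setminus\partial^c\mathbf C$ by invoking \cite[Proposition 6.8-(i)]{BF} as a black box, and then disposes of interior sequences by showing that $[a_n,b_n]=L_{x_n}\cap\mathbf C$ must Hausdorff-collapse to $\{a\}$ (because $]a',b'[\,\subset\partial^c\Opl\cap\partial\mathbf C=\{a\}$ by the maximality argument of Proposition \ref{prop:cont}, Step 1, Case II) and writing $\sigma(x_n)=\sigma(a_n)$. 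You instead give a self-contained chord-direction argument: in the collapsing case the unique outer normal at $a$ gives differentiability of the convex boundary there, hence chords with endpoints tending to $a$ align with $\nu(a)^\perp$ (this is correct, and is presumably close to what \cite[Proposition 6.8-(i)]{BF} does internally); in the non-collapsing cases you either reach a contradiction with $a\in\partial^c\Opl$ or fall back on flatness of $\partial\mathbf C$. Two small points to tidy up: your dichotomy "$a'=a=b'$ or WLOG $a'=a\neq b'$" omits the possibility $a\in\,]a',b'[$ with $a'\neq a\neq b'$, which is handled by noting that a segment between two points of $\partial\Opl$ containing a boundary point in its relative interior lies entirely in $\partial\Opl$, after which your flatness argument applies; and your non-collapsing sub-case $]a,b'[\,\subset\partial\Opl$ is in fact vacuous (it would force $]a,b'[\,\subset\partial^c\Opl\cap\partial\mathbf C=\{a\}$), which is how the paper gets the collapse for free — though handling it rather than excluding it is harmless.
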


\begin{proof}
Let $L$ be a characteristic line segment such that $\partial \mathbf C \cap \overline \O_{pl} \supset L \cap \overline \O_{pl} $. As before, thanks to Lemma \ref{lem:orientation}, we can assume without loss of generality that \eqref{eq:orient} holds. We claim that $\sigma$ extends by continuity to $\{a\}$ by setting $\sigma(a)=\nu(a)$.

According to \cite[Proposition 6.8-(i)]{BF}, we already know that, for any sequence $\{x_n\}_{n\in \N}$ in $(\partial \mathbf C \cap \partial\Opl) \setminus\partial^c\mathbf C$ with  $x_n\to a$, and say $x_n\in \Gamma_1$, {there exists a subsequence (not relabeld) and $\e=\pm 1$ (possibly depending on the subsequence) such that $\sigma(x_n) \to \e \nu$}.  By continuity of $\sigma$ on $(\partial \mathbf C \cap \partial\Opl) \setminus\partial^c\mathbf C$, we have {\it e.g.} $\sigma(x_n)\cdot (y-x_n) \leq 0$ for all $y \in L \cap \overline \O_{pl} $ and all $n$'s. Passing to the limit in $n$, $\e\nu(a) \cdot (y-a)\leq 0$ for all $y \in L \cap \overline \O_{pl} $, hence $\e=1$ and does not depend on the particular subsequence of $\{x_n\}_{n \in \N}$.

If now $x_n\in \mathring{\mathbf C} \to a$, then an argument identical to that of Case I in Lemma \ref{lem:segment} shows that $L_{x_n}\cap \mathbf C=:[a_n,b_n],$ with $a_n$, $b_n\in \partial \mathbf C \cap \partial\Opl \setminus\partial^c\mathbf C$, Hausdorff-converges to $\{a\}$, so that in particular $a_n\to a$ and $b_n\to a$. But then, by the previous considerations,
$${|\sigma(x_n)-\nu(a)|=|\sigma(a_n)-\nu(a)|} \to 0,$$
since {$a_n \in L_{x_n}$ and} $a_n\in (\partial\mathbf C\cap\partial\Opl) \setminus \partial^c\mathbf C$ and $a_n\ne a$.
\end{proof}

As   Example \ref{ex.tr} below shows, if $\mathbf C$ admits several normals at some isolated characteristic boundary point, then $\sigma$ might not be continuous at that point. 

\begin{example}\label{ex.tr}
{\rm 
Let $T$ be the triangle with vertices $(0,0)$, $a_0:=(0,1)$ and {$b_0:=(1/2,1/2)$}. For all $n \in \N$, we define the points
$$a_n=(0,2^{-n}), \quad b_n=(2^{-n-1},2^{-n-1}).$$
In the triangle $T_n=(a_n,a_{n+1},b_n)$ we consider a fan with apex in $b_n$, while in the triangle $T'_n=(a_{n+1}, b_n,b_{n+1})$ we consider a fan with apex $a_{n+1}$. The fans are oriented in such a way that the resulting function $\sigma$ is continuous across two adjacent triangles. {More precisely, $\sigma$ is defined as
$$
\sigma(x)=
\begin{cases}
\quad\;\frac{(x-b_n)^\perp}{|x-b_n|} & \text{ if } x \in T_n,\\[2mm]
-\frac{(x-a_{n+1})^\perp}{|x-a_{n+1}|} & \text{ if } x \in T'_n.
\end{cases}$$}

We have thus constructed a function $\sigma \in W^{1,\infty}_{\rm loc}(T;\R^2)$ such that
$$|\sigma|=1, \quad {\rm div}\sigma=0 \quad \text{ in } T.$$ 
It is straightforward, with the help of \cite[Theorem 6.2]{BF}, to construct an explicit solution to \eqref{eq:plast} on $T$ with appropriate boundary conditions so that such a $\sigma$ is indeed the associated stress. 

Let $\e>0$ small and $\mathbf C=(0,a_\e,b_\e) \subset T$ be a sub-triangle with $a_\e$, $b_\e \in  \; ]a_0,b_0[$, $|a_\e-a_0| \leq \e$, and $|b_\e-b_0|\leq \e$. Let $x_n$ we the intersection point between the segments $[0,a_\e]$ and $[a_{n},b_n]$, and $y_n$ be the intersection point between the segments $[0,b_\e]$ and $[a_{n+1},b_n]$. Both sequences satisfy $x_n \to (0,0)$ and $y_n \to (0,0)$. Then $\sigma \in \C^0(\overline{\mathbf C}\setminus \{0\};\R^2)$, while 
$${\lim_{n\to \infty}\sigma(x_n) =\frac{(a_0-b_0)^\perp}{|a_0-b_0|}, \quad \lim_{n\to \infty}\sigma(y_n) =-\frac{(b_0-a_1)^\perp}{|b_0-a_1|}.}$$

The point $(0,0) \in \partial^c\mathbf C$ is an isolated characteristic boundary point and all vectors  that belong to the normal cone to $\mathbf C$ at $(0,0)$, are limits of a sequence $\{\sigma(z_n)\}_{n \in \N}$ for some $z_n \in T_\e$ with $z_n \to (0,0)$.
\begin{figure}[htbp]
\scalebox{1}{
\begin{tikzpicture}
\draw [color=blue]  (0,0)-- (1.5,1.5); 
\draw [color=blue] (0,3)-- (1.5,1.5); 
\draw  [color=blue](0,0)-- (0,3);
\draw (0,3) node{{$\cdot$}};
\draw (1.5,1.5) node{{$\cdot$}};
\draw(-.4,3) node{\small{$a_0$}};
\draw(1.8,1.5) node{\small{$b_0$}};
\draw (0,1.5)--(1.5,1.5);

\draw (0, 1.5) node{{$\cdot$}};
\draw (-.4,1.5) node{\small{$a_1$}};
\draw (0.75, 0.75) node{{$\cdot$}};
\draw (1.05,0.75) node{\small{$b_1$}};
\draw (0,1.5)--(0.75,0.75);
\draw (0, .75) node{{$\cdot$}};
\draw (-.4,.75) node{\small{$a_{2}$}};
\draw (0,.75)--(0.75,0.75);
\draw (.375, .375) node{{$\cdot$}};
\draw (.85,.375) node{\small{$b_{2}$}};
\draw (0, .75)--(0.375,0.375);
\draw (0, .375) node{{$\cdot$}};
\draw (0, .375)--(0.375,0.375);
\draw (0.1875,0.1875) node{{$\cdot$}};
\draw (0, .375)--(0.1875,0.1875);
\draw (0, .187) node{{$\cdot$}};
\draw (0,0.1875)--(0.1875, 0.1875);
\draw (0,0.1875)--(.09375, .09375);
\draw (.09375,.09375) node{{$\cdot$}};
\draw (0,.09375)--(.09375,.09375);

\draw  (.3,0.95) node{\tiny{$T_1$}};
\draw  (.38,.6) node{\tiny{$T'_{1}$}};

\draw [color=blue](1.3,2.3)node{$T$};

\end{tikzpicture}}
\caption{An example of non-continuity of $\sigma$.}
\label{fig:trian}
\end{figure}
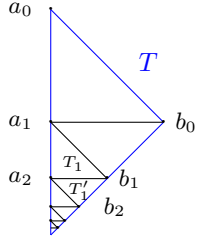

An elementary computation would demonstrate that the $L^2$-norm of $\nabla \sigma$ blows up on $T$
so that the situation described in this example cannot  happen if $T\subsetneq \O$. {This also provides an example distinct from that of boundary fans demonstrating} that one cannot hope to get $H^1$-regularity up to the boundary for $\sigma$. The  $H^1_{\rm loc}$-regularity is in a sense optimal.
\hfill\P}
\end{example}

We are now in position to complete the proof of Theorem \ref{thm:cont_sigma}.

\begin{proof}[Proof of Theorem \ref{thm:cont_sigma}]
From Corollary \ref{cor:char-bound}, we know that $\partial^c\Opl$ contains at most two connected components reduced to a single point that we call $z_1$ and $z_2$. We define $\mathcal Z$ by setting  $z_i \in \mathcal Z$ if $z_i \in \Sigma$ and the normal cone to $\Opl$ at $z_i$ is not reduced to a single direction. The conclusion of Theorem \ref{thm:cont_sigma} is then just a concatenation of the previous results. Indeed Proposition \ref{prop:cont}, Lemmas \ref{lem:segment} and \ref{lem:single_point} together with the fact, originally established in \cite[Theorem 6.2]{BF}, that, if $S=\; ]a,b]\subset\pa^c\Opl\cap\O$, then $\sigma|_{\Opl} \in \C^\infty(\overline{\mathbf F}_{a} \setminus \{a\};\R^2)$ where $\mathbf F_{a}$ is a {boundary fan} with apex $a\in\pa\O$, imply that 
\begin{equation}\label{eq:contOmegapl}
\sigma \in \C^0((\Opl\cup\Sigma)\setminus\mathcal Z;\R^2).
\end{equation}

\medskip

Let $\omega$ be an open subset of $\O$ such that $\mathcal Z \cap \overline \omega=\emptyset$.
Since $\sigma \in H^1(\omega;\R^2)$, then its trace on $\Sigma \cap \omega$, denoted by $g$, is continuous on that set. Using that $\sigma|_{\Oel}=\nabla u|_{\Oel}$ and that $u$ is harmonic in $\Oel$, we infer that
$$\begin{cases}
\Delta \sigma=0 & \text{ in }\Oel \cap \omega,\\
\sigma=g & \text{ on } \Sigma \cap \omega.
\end{cases}$$
The same argument as that in Step 2 of the proof of Proposition \ref{prop:cont} allows us to conclude that $\sigma$ is continuous on $(\Oel \cup \Sigma) \cap \omega$, and thus 
\begin{equation}\label{eq:contOmegael}
\sigma \in \C^0((\Oel\cup\Sigma)\setminus \mathcal Z;\R^2).
\end{equation}
Combining \eqref{eq:contOmegapl}, \eqref{eq:contOmegael} together with $\sigma \in H^1_{\rm loc}(\O;\R^2)$ leads to $\sigma \in \C^0(\O \setminus \mathcal Z;\R^2)$.
\end{proof}

\begin{remark}\label{rem:cont-sig--mulcomp}{
An immediate adaptation of the proof of Theorem \ref{thm:cont_sigma} shows that a similar result holds provided $\Opl$ is the finite union of convex sets with pairwise disjoint closures. More precisely, assume that
$$\Opl=\bigcup_{i=1}^N \O_{pl}^i,$$
where $ \O_{pl}^1,\ldots,\O_{pl}^N$ are open convex sets satisfying $\overline \O_{pl}^i \cap\overline \O_{pl}^j =\emptyset$ for all $1 \leq i \neq j \leq N$. Then, there exists an exceptional set $\mathcal Z \subset \bigcup_{i=1}^N \partial^c \O_{pl}^i$ made of at most $2N$ points such that $\sigma$ is continuous on $\O \setminus \mathcal Z$. 
\hfill\P}
\end{remark}

\begin{remark}\label{rem.sconvex}
If $\Opl$ is strictly convex then its boundary contains no flat parts and thus, $\partial^c\Opl$ can only be at most two isolated points. So, recalling Remark \ref{rem.cont-bdary}, we conclude that $\sigma$ is continuous on $\overline \O_{pl} $, except maybe on the countable set $\mathcal F \cup \mathcal Z$ and that only $\mathcal Z$ can be inside $\O$.\hfill\P
\end{remark}

\subsection{Continuity of the displacement}

In this subsection we investigate the continuity properties of the displacement. Although $u$ is only with bounded variation in $\Opl$, we will show that it is continuous  at all points of $\Opl$  swept by characteristic lines passing through a point of $\Sigma$ (see \eqref{eq:def-Sigma}), that is on the the set
\begin{equation}\label{eq:omega}
\omega:=\{x \in \Opl : \; \exists \, y \in \Sigma \setminus \partial^c\Opl \text{ such that } x \in L_y\}.
\end{equation}

We propose to prove the following partial continuity property of the displacement(s):
\begin{theorem}\label{thm:cont-u}
Under assumption {\bf (H)}, $u$ (has a representative which) is continuous in $\Oel \cup \Sigma \cup \omega$. Moreover, for all $x \in \omega$, $u$ is constant along the characteristic line segment $L_x \cap \Opl$.
\end{theorem}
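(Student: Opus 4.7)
The strategy is to use the flow rule on $\Sigma$ to forbid jumps of $u$ across $\Sigma\setminus\partial^c\Opl$, and then to transport the resulting continuous trace from $\Oel$ along characteristic lines into $\omega$. The key first observation is a transversality property: for any $x_0\in\Sigma\setminus\partial^c\Opl$, Definition \ref{def.dcopl} produces $z_0\in\Opl$ with $x_0\in L_{z_0}$; since $\Opl$ is convex and $z_0$ is an interior point of it lying on $L_{z_0}$, the line $L_{x_0}=L_{z_0}$ cannot be a supporting line of $\Opl$ at $x_0$, hence is not tangent to $\partial\Opl$ there. Combined with the fact that $\Sigma$ is locally a Lipschitz graph (Remark \ref{rem:cont-|sigma|-Sigma}) and with $|\sigma|=1$ on $\Sigma$, this yields $|\sigma(x_0)\cdot\nu(x_0)|<1$ for $\HH^1$-a.e.\ $x_0\in\Sigma\setminus\partial^c\Opl$. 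Plugging this into the localized flow rule $(\sigma\cdot\nu)(u^+-u^-)=|u^+-u^-|$ of Remark \ref{rmk:jump-flow-rule}-(i) forces $u^+=u^-$ $\HH^1$-a.e.\ on $\Sigma\setminus\partial^c\Opl$, so there is no jump of $u$ on that part of the interface.

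I would then upgrade this absence of jump into a genuine continuity statement. By Lemma \ref{lem:whatinOel}, $u$ is harmonic with $|\nabla u|<1$ in $\Oel$, hence $u\in W^{1,\infty}_{\mathrm{loc}}(\Oel)$. Since $\Sigma$ is locally a Lipschitz graph by convexity of $\Opl$, $\Oel$ is locally a Lipschitz subdomain near each point of $\Sigma$, and the uniform bound $|\nabla u|<1$ propagates to a Lipschitz extension of $u|_{\Oel}$ up to $\Sigma$. This produces a continuous representative $\tilde u$ of $u$ on $\Oel\cup\Sigma$ which, by the preceding paragraph, coincides $\HH^1$-a.e.\ with the one-sided $\Opl$-trace of $u$ on $\Sigma\setminus\partial^c\Opl$.

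For the extension to $\omega$, for each $x\in\omega$ I would pick $y(x)\in L_x\cap(\Sigma\setminus\partial^c\Opl)$ (which exists by the very definition of $\omega$) and set $\tilde u(x):=\tilde u(y(x))$. Three facts must be checked. First, well-definedness when $L_x$ meets $\Sigma\setminus\partial^c\Opl$ at two points $y_1$, $y_2$: Theorem \ref{thm:struct-sigma-u} says $u$ is constant along $L_x\cap\Opl$ for $\LL^2$-a.e.\ $x$, and via the $\Opl$-side BV-trace together with the first step this common value equals both $\tilde u(y_1)$ and $\tilde u(y_2)$ for a.e.\ such $x$; continuity of $\tilde u$ on $\Sigma\setminus\partial^c\Opl$ then propagates the equality to every $x\in\omega$. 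Second, continuity of $\tilde u$ on $\omega$: by Theorem \ref{thm:cont_sigma}, $\sigma$ is continuous off the (at most two point) exceptional set $\mathcal Z\subset\partial^c\Opl$, and the local straightening of characteristic lines of \cite[Proposition 5.7]{BF} makes $x\mapsto y(x)$ locally continuous, so $\tilde u\circ y$ is continuous on $\omega$, with automatic matching with the previous paragraph across $\Sigma\setminus\partial^c\Opl$. Third, $\tilde u=u$ $\LL^2$-a.e.\ in $\omega$ by the a.e.\ identification above, and by construction $\tilde u$ is constant along every $L_x\cap\Opl$ with $x\in\omega$, which is precisely the second assertion of the theorem.

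The main obstacle is the continuity/trace-matching of the last step: upgrading the $\LL^2$-a.e.\ constancy of $u$ along characteristics given by Theorem \ref{thm:struct-sigma-u} to \emph{every} characteristic in $\omega$, and reconciling the BV-trace of $u$ from the $\Opl$ side with the Lipschitz trace $\tilde u$ from the $\Oel$ side in a genuinely pointwise (not merely a.e.) fashion. This relies crucially on the foliation structure of $\Opl$ recalled in Theorems \ref{thm:structOp} and \ref{prop:structOp2} together with the continuity of $\sigma$ on $\O\setminus\mathcal Z$ proved earlier in the section. Continuity at points of $\Sigma\cap\partial^c\Opl$ that are limits of sequences in $\omega$ requires an additional argument based on the Hausdorff convergence of the associated characteristic segments together with the Lipschitz extension from the $\Oel$ side, exploiting that on flat components of $\partial^c\Opl$ one has $\sigma=\pm\nu$ (Remark \ref{rem:sigma-segment}), so that the nearby characteristics are nearly tangent to $\Sigma$ and their intersection points with $\Sigma\setminus\partial^c\Opl$ accumulate at the limit point.
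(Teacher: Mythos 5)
Your first two steps (transversality of $L_{x_0}$ to $\partial\Opl$ at non-characteristic interface points, hence $|\sigma\cdot\nu|<1$ and no jump via the flow rule; Lipschitz extension of $u|_{\Oel}$ up to $\Sigma$) match the paper. The gap is in your third step, at the sentence ``$\tilde u=u$ $\LL^2$-a.e.\ in $\omega$ by the a.e.\ identification above.'' The identification $u^+=u^-$ holds only off an $\HH^1$-negligible set $Z_1\subset\Sigma$, and Theorem \ref{thm:struct-sigma-u} gives constancy of $u$ along $L_x\cap\Opl$ only off the characteristics emanating from a second $\HH^1$-negligible set $Z_2$. To conclude that $\tilde u=u$ a.e.\ you must show that the union $\bigcup_{z\in Z_1}(L_z\cap\Opl)$ is $\LL^2$-negligible. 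For $Z_2\subset\Opl$ this follows from the bi-Lipschitz straightening of \cite[Proposition 5.7]{BF}, which you invoke — but that straightening is only valid in the \emph{interior} of $\Opl$, where $\sigma$ is locally Lipschitz. The set $Z_1$ lives on $\Sigma\subset\partial\Opl$, where $\sigma$ is merely $H^1_{\rm loc}$, and an $\HH^1$-null (even a single-point) subset of $\partial\Opl$ can a priori emit a family of characteristics sweeping a set of positive area: the apex of a boundary fan does exactly that. So the transport of an ``$\HH^1$-a.e.\ on $\Sigma$'' statement to an ``$\LL^2$-a.e.\ in $\omega$'' statement is precisely the nontrivial point, and your argument does not address it.

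The paper closes this gap with a dedicated quantitative lemma (Merlet's Lemma, proved in the Appendix): for a divergence-free unit field $m\in H^1(A;\R^2)\cap\C^0(\overline A;\R^2)$ on a convex set $A$, the characteristics issued from an $\HH^1$-null subset of a suitable boundary arc sweep an $\LL^2$-null set. Its proof is a covering argument in which the area of the region between two nearby characteristics is played off against a lower bound on $\int|\nabla m|^2$ in a trapezoid between them; the $H^1$ bound on $\sigma$ is what rules out the fan-type concentration. Without this (or an equivalent substitute), your construction of $\tilde u$ produces a continuous function on $\omega$ that you cannot identify with $u$. Separately, your treatment of the points of $\Sigma\cap\partial^c\Opl$ is only sketched; the paper needs a genuine case analysis there (constancy of the trace $u^+$ on a flat component $[a,b]$ via $\partial_\nu u=\pm1$ and Lemma \ref{lem.u=cst}, plus the homeomorphism $f$ of Lemma \ref{lem:f_cont} to match values across the component), but that part of your outline is at least pointing in the right direction.
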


\begin{remark}
Theorem \ref{thm:cont-u} is the best result that one can hope for when boundary data  are merely in $L^1(\pa\O)$. {Indeed consider $\O$ to be a fan of the form $\{x=(r\cos\theta,r\sin\theta): 0\le r\le 1, 0\le\theta\le \pi/2\}$ and let $h=[0,\pi/2] \to \R$ be a monotonically increasing and non-continuous function with $h'>1$. We set $w(1,\theta)=h(\theta)$ for $\theta \in [0,\pi/2]$ and $w(r,0)=h(0)$ (resp. $w(r,\pi/2)=h(\pi/2)$) for $r \in (0,1]$. Then, according to  \cite[Theorem 6.2]{BF}, the unique solution associated to the Dirichlet boundary data $w \in L^1(\partial\O)$} will be such that $\O=\Opl$ ($\Oel=\Sigma=\omega=\emptyset$) and $u(r,\theta)\equiv h(\theta)$ for all $(r,\theta) \in (0,1) \times (0,\pi/2)$, hence discontinuous along the characteristic lines passing through the discontinuity points of the form $(1,h(\theta))$ with $h$ discontinuous at $\theta$. 
\hfill\P
\end{remark}

We already know that $u \in \C^\infty(\Oel) \cap W^{1,\infty}(\Oel)$. Since  $\Sigma$ is locally the graph of a Lipschitz function, we deduce, as in the proof of Theorem \ref{thm:BF1}, that  
\begin{equation}\label{eq.u-cont-el+sig}
u \in \C^0(\Oel \cup \Sigma). 
\end{equation}
It thus enough to show that $u$ is continuous in $\omega$ and that $u^+=u^-$ on $\Sigma$, where $u^+$ (resp. $u^-$) denotes the trace of $u|_\Oel$ (resp. $u|_\Opl$) on $\Sigma$. Note that previous argument shows that $u^+$ is continuous on $\Sigma$.

Since $\Sigma$ is open in the relative topology of $\partial\Opl$, it has at most countably many connected components. Arguing separately with each connected component, we can assume without loss of generality that $\Sigma$ is connected. Let $g:[0,1] \to \Sigma$ be a one-to-one Lipschitz mapping such that $g(]0,1[) =\Sigma$.

\medskip

\begin{lemma}\label{lem:cont_u}
Let $0 \leq s_0 < t_0 \leq 1$ be such that $g(]s_0,t_0[) \cap \partial^c\Opl=\emptyset$. Define
$$\Sigma_0=g(]s_0,t_0[), \quad \omega_0:=\{x \in \Opl : \; \exists \; y \in \Sigma_0  \text{ such that } x \in L_y\}.$$
Then $u$ has a representative which is continuous in $\Oel \cup \Sigma_0 \cup \omega_0$.
\end{lemma}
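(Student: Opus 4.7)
The plan is to construct a continuous representative of $u$ on the strip $\omega_0$ by transporting boundary values along characteristic lines from $\Sigma_0$, after first showing that $u$ has no jump across $\Sigma_0$. The conclusion then follows by combining with the already known continuity on $\Oel \cup \Sigma$ provided by \eqref{eq.u-cont-el+sig}.

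The first and most important step is to prove $u^+ = u^-$ $\HH^1$-a.e. on $\Sigma_0$, where $u^\pm$ denote the one-sided traces on $\Sigma$. The flow rule localized on $\Sigma$ (Remark \ref{rmk:jump-flow-rule}-(i)) reads $(\sigma\cdot\nu)(u^+-u^-)=|u^+-u^-|$ $\HH^1$-a.e. on $\Sigma$, so any jump forces $\sigma\cdot\nu=\pm 1$ and, combined with $|\sigma|=1$ $\HH^1$-a.e. on $\Sigma$ (Remark \ref{rem:cont-|sigma|-Sigma}), this forces $\sigma(y)=\pm\nu(y)$, i.e.\ $\sigma^\perp(y)$ tangent to $\Sigma$ at $y$. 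But since $y\in\Sigma_0$ is non-characteristic, there exists $z\in\Opl$ with $y\in L_z$, hence $z\in L_y\cap\Opl$; if $L_y$ were tangent to $\Sigma$ at $y$, convexity of $\Opl$ would force $\Opl$ to lie in an open half-plane bounded by $L_y$, contradicting $z\in L_y\cap\Opl$. Thus $u^+=u^-$ on $\Sigma_0$, and the common value, call it $u^*$, is continuous there by \eqref{eq.u-cont-el+sig}.

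Next I would define $\tilde u$ on $\omega_0$ by $\tilde u(x):=u^*(y)$ for any $y\in L_x\cap\Sigma_0$. To check well-definedness, if two such $y_1$, $y_2$ coexist on $L_x$ then by \cite[Proposition 5.5]{BF} they must be the two endpoints of $L_x\cap\overline\Opl$; for $\HH^1$-a.e.\ such $y_1$ (excluding the exceptional set $Z$ from Theorem \ref{thm:struct-sigma-u}), $u$ is constant along $L_{y_1}\cap\Opl$ and the slicing formula \eqref{eq:slicing} identifies this constant with both trace values, so $u^*(y_1)=u^*(y_2)$; the equality extends to every pair by continuity of $u^*$ on $\Sigma_0$ and of the exit-point map $y\mapsto y'$, the latter being a consequence of the continuity of $\sigma$ on $\omega_0\cup\Sigma_0$ (Theorem \ref{thm:cont_sigma}, using $\Sigma_0\cap\partial^c\Opl=\emptyset$ and $\mathcal Z\cap\Opl=\emptyset$).

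Finally, the map $(y,t)\mapsto y+t\sigma^\perp(y)$ defined on $\{(y,t):y\in\Sigma_0,\ 0<t<T(y)\}$, where $T(y)$ is the exit time from $\Opl$ along $L_y$, is jointly continuous, so $\tilde u$ is continuous on $\omega_0\cup\Sigma_0$, and a Fubini argument using that $u$ is constant along $\HH^1$-a.e.\ leaf (Theorem \ref{thm:struct-sigma-u}) shows $\tilde u=u$ $\LL^2$-a.e.\ in $\omega_0$. Gluing with \eqref{eq.u-cont-el+sig} produces the desired continuous representative on $\Oel\cup\Sigma_0\cup\omega_0$. I expect the main obstacle to be the no-jump step: the convexity-of-$\Opl$ argument ruling out tangency of characteristics to $\Sigma$ at non-characteristic points is the essential new ingredient; the well-definedness of $\tilde u$ when both endpoints of a characteristic lie in $\Sigma_0$ is a secondary but nontrivial subtlety.
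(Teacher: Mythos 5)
Your overall architecture matches the paper's: rule out a jump of $u$ across $\Sigma_0$ via the flow rule plus transversality of the characteristics (a consequence of convexity of $\Opl$ and $\Sigma_0\cap\partial^c\Opl=\emptyset$), transport the continuous exterior trace $u^+$ along characteristic lines to define a continuous candidate on $\omega_0$, and then identify it with $u$ almost everywhere. The no-jump step is handled correctly and in essentially the same way as the paper.

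There is, however, a genuine gap in the identification step, and it is precisely the point the paper isolates as the hard part. The flow rule only gives $u^+=u^-$ $\HH^1$-a.e.\ on $\Sigma_0$, so there remains an $\HH^1$-negligible set $Z_1\subset\Sigma_0$ where the traces may disagree; likewise Theorem \ref{thm:struct-sigma-u} only gives constancy of $u$ along $L_x\cap\Opl$ off the characteristics issued from an $\HH^1$-null set. To conclude $\tilde u=u$ $\LL^2$-a.e.\ you must show that $\bigcup_{z\in Z_1}(L_z\cap\Opl)$ is $\LL^2$-negligible. Your ``Fubini argument'' works for exceptional sets in the \emph{interior} of $\Opl$, where the local bi-Lipschitz straightening of the characteristics is available because $\sigma$ is locally Lipschitz there (this is how the paper handles the set $Z_2$ from Theorem \ref{thm:struct-sigma-u}, and how Proposition \ref{prop:level-set} proceeds). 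It does \emph{not} work for $Z_1\subset\Sigma_0\subset\partial\Opl$: $\sigma$ is not Lipschitz up to $\partial\Opl$, and an $\HH^1$-null boundary set can in general emit a positive-measure bundle of characteristics --- a boundary fan, whose apex is a single point sweeping a set of positive $\LL^2$-measure, is exactly this phenomenon. Transversality of the characteristics to $\Sigma_0$ does not by itself give the quantitative control on the compression of the flow near the boundary that absolute continuity requires. The paper closes this gap with Merlet's Lemma (the entire Appendix), which exploits the $H^1_{\rm loc}$-regularity of $\sigma$ through a logarithmic energy estimate on thin characteristic sectors; some substitute for that estimate is indispensable, and your proposal contains none.
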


\begin{proof}
By definition of $\omega_0$, for all $x \in \omega_0$, there exists a unique characteristic line segment $L_x$ passing through $x$ and intersecting $\Sigma_0$ at a unique point $z_x$. Note that $z_x$ cannot be the apex of a fan since $z_x \in \O$. We define
$$\hat u(x)=u^+(z_x).$$

We claim that the function $\hat u$ is continuous in $\omega_0$. To that effect,  consider a point $x \in \omega_0$. Since $u^+$ is continuous at $z_x \in \Sigma_0$, for all $\e>0$ there exists $\delta>0$ such that $|u^+(y)-u^+(z_x)|\leq \e$ for all $y \in \overline B_\delta(z_x) \cap \Sigma_0$. Let 
$$A_\delta=\bigcup_{y \in \overline B_\delta(z_x) \cap \Sigma} L_y \cap \omega_0.$$
Clearly, $A_\delta$ has non-empty interior and $x \in A_\delta$. Moreover, since characteristic lines do not intersect inside $A_\delta$, for all $y \in A_\delta$ we have $z_y \in \overline B_\delta(z_x)$ and thus
$$|\hat u(y) - \hat u(x)|=|u^+(z_y) - u^+(z_x)| \leq \e,$$
which proves the continuity of $\hat u$ on $\omega_0$ and that, by construction $\hat u|_{\Sigma_0}=u^+$.

Then, $\hat u$ belongs to the  equivalence class of $u$, {\it i.e.}, $\hat u(x)=u(x)$ for $\LL^2$-a.e. $x \in \omega_0$. Here is why. By Theorem \ref{thm:cont_sigma}, $\sigma$ is continuous on $\Sigma_0$ since $\Sigma_0 \cap \partial^c\Opl=\emptyset$. Moreover, by convexity of $\Opl$, for every $x \in \Sigma_0$, the characteristic line $L_x$ is not tangent to $\Sigma_0$. Thus $|\sigma\cdot\nu|<1$ $\HH^1$-a.e. on $\Sigma_0$, and the flow rule in item (i) of Remark \ref{rmk:jump-flow-rule} implies that $u^+=u^-$ $\HH^1$-a.e. on $\Sigma_0$. We can thus find an $\HH^1$-negligible set $Z_1 \subset \Sigma_0$ such that $u^+=u^-$ everywhere on $\Sigma_0 \setminus Z_1$. 

Then $N_1:=\bigcup_{z \in Z_1} (L_z \cap \Opl)$ is $\LL^2$-negligible. It is enough, for that purpose, to show that, for all $s_0<s_1<t_1<t_0$,
\begin{equation}\label{eq:0956}
\LL^2\left(\bigcup_{z \in Z_1 \cap g(]s_1,t_1[)} (L_z \cap \Opl)\right)=0.
\end{equation}

For $\delta < \frac12\min\{s_1-s_0,t_0-t_1\}$ small, consider the convex open set $A_\delta :=\Opl \cap \{x \in \O : \; {\rm dist}(x,\partial\O)>\delta\}$  which has the property $A_\delta \subset\subset \O$, hence that $\sigma \in H^1(A_\delta;\R^2)$.  By the choice of $\delta$, the points $g(s_1)$ and $g(t_1)$ belong to $\O^\delta \cap \partial A_\delta$ and the characteristic lines $L_{g(s_1)}$ and $L_{g(t_1)}$ are not tangential to $\partial A_\delta$. We can then apply Merlet's Lemma stated and proved in the Appendix to, in the notation of that Lemma, $A=A_\delta$ and $\mathcal C=g(]s_1,t_1[)$, the open arc joining $g(s_1)$ and $g(t_1)$ in $\Sigma_0$. We conclude that the set $\bigcup_{z \in Z_1 \cap g(]s_1,t_1[)}(L_z \cap A_\delta)$ is $\LL^2$-negligible. Letting $\delta \to 0$, we obtain \eqref{eq:0956}.

Moreover, according to \cite[Theorem 5.6]{BF}, there exists an $\HH^1$-negligible set $Z_2 \subset \Opl$ such that $u$ is constant on $L_x \cap \Opl$ for all $x \in \O \setminus N_2$ where $N_2:=(\bigcup_{z \in Z_2} L_z)\cap \Opl$ is $\LL^2$-negligible.  The resulting set $Z:=Z_1\cup Z_2$ is $\HH^1$-negligible and  $N:=N_1\cup N_2$ is $\LL^2$-negligible. Moreover, since  $\hat u=u^+$ on $\Sigma_0 \setminus Z$, we deduce that $\hat u=u$ in $\omega_0 \setminus N$, hence $\hat u=u$ $\LL^2$-a.e. in $\omega_0$.
\end{proof}

\begin{proof}[Proof of Theorem \ref{thm:cont-u}]
We distinguish several cases.

\medskip

\noindent {\sf Case 1.} If $\Sigma \subset \partial^c \Opl$ then $\Sigma$ is a closed line segment and $L_y \cap \Sigma=\emptyset$ for all $y \in \Opl$. In that case $\omega=\emptyset$ and the conclusion follows. 

\medskip

\noindent {\sf Case 2.} If $\Sigma \cap \partial^c\Opl=\emptyset$, we apply Lemma \ref{lem:cont_u} with $s_0=0$ and $t_0=1$.

\medskip

\noindent {\sf Case 3.} If $\Sigma \cap \partial^c\Opl\neq \emptyset$, by Corollary \ref{cor:char-bound} and Proposition \ref{prop:convx}, the characteristic boundary $\partial^c \Opl$ has at most two connected components which are closed in the relative topology of $\Sigma$, and whose interior is either disjoint from $\Sigma$ or contained in $\Sigma$. Therefore, by Lemma \ref{lem:cont_u}, it is enough to check that if $S=(a,b)$ is a connected component of $\partial^c \Opl$, then $u^+=u^-$ on $S$ and $u$ is continuous in a neighborhood of $S$ in $\O$.

Let $0 \leq t_a \leq t_b \leq 1$ be such that $a=g(t_a)$ and $b=g(t_b)$. Note that we cannot have $t_a=0$ and $t_b=1$, otherwise $a=g(0)$ and $b=g(1)$ and $\Sigma \subset \partial^c \Opl$, corresponding to Case 1 above. We can thus assume without loss of generality that $t_a>0$. We will distinguish two further subcases.

\medskip

{\sf Case 3a.} If $S=[a,b]=\partial^c\mathbf C$ is the characteristic boundary of a connected component $\mathbf C$ of $\mathscr C$ with nonempty interior, by Theorem \ref{prop:structOp2}, there exists a characteristic line segment $L$ such that $L \cap \Opl \subset \partial\mathbf C$. We denote by $p$ and $q$ the two intersection points of $L$ with $\partial\Opl$. We first note that
\begin{equation}\label{eq:starstar} 
\text{$p$ or $q$ does not belong to $\Sigma$.}
\end{equation}
Indeed, assume by contradiction that both $p$ and $q \in \Sigma$. Since $L\cap \overline\O_{pl}=[p,q]$ is a part of the boundary of $\mathbf C$, Remark \ref{rem:bdryfancc} shows that it is the limit for the Hausdorff convergence of boundaries of (boundary) fans $\{\mathbf F_{z_n}\}_{n \in \N}$ and, say, $p$ is the limit of the apexes $\{z_n\}_{n \in \N}$ which belong to $\partial\Opl \cap \partial\O$. But  $p \in \Sigma \subset \O$, so that $z_n \in \O$ for $n$ large enough, which is impossible.

Using the notation of Theorem \ref{prop:structOp2}, let $\Gamma_1$ and $\Gamma_2$ to be the two connected components of $\partial\mathbf C \cap \partial\Opl \setminus \partial^c\mathbf C$. Up to a change of orientation of the parameterization $g$ of $\Sigma$, we may assume that there exists $t_0 \in \; ]0,t_a[$ such that $g([t_0,t_a[) \subset \Gamma_1$. In particular, $g(t_0) \in \Gamma_1$ so that $L_{g(t_0)}$ will intersect $\Gamma_2$. Let $H$ be an open half-plane such that  $\partial H=L_{g(t_0)}$ and $\overline H$ contains $S$. Let us show that $u$ is continuous in $(\O \cap \overline{\mathbf C} \cap \overline H) \cup \Oel$.

From Theorem \ref{prop:structOp2}-(i), for all $x \in (\O \cap \overline{\mathbf C} \cap \overline H) \setminus [a,b]$, there exists a unique characteristic line $L_x$ passing through $x$ and intersecting $g(]t_0,t_a[)$ at a unique point $z_x$. We define
\begin{equation}\label{eq:hatu}
\hat u(x)=
\begin{cases}
u^+(z_x) & \text{ if }{\overline{\mathbf C} \cap \overline H} \setminus [a,b],\\
u^+(x) & \text{ if }x \in [a,b].
\end{cases}
\end{equation}

We first show that the function $\hat u|_{ \Sigma \cap {\overline H}}$ is continuous on $\Sigma \cap {\overline H}$. First, by construction, $\hat u|_{g([t_0,t_b])}=u^+|_{g([t_0,t_b])}$ is continuous on {$g([t_0,t_b])$}. On the other hand, using the function $f$ introduced in Lemma \ref{lem:f_cont}, $\hat u= u^+ \circ f^{-1}$ on $g(]t_b,1[) \cap {\overline H}$ which shows that $\hat u|_{g(]t_b,1[) \cap {\overline H}}$ is continuous on $g(]t_b,1[) \cap {\overline H}$ as the composition of continuous functions. It remains to show the continuity of $\hat u|_{\Sigma \cap {\overline H}}$ at the junction point $b$. For all $y \in g(]t_b,1[) \cap {\overline H}$, there exists a unique $x(y) \in g(]t_0,t_a[)$ such that $f(x(y))=y$. Since $x(y)\to a$ as $y \to b$, we deduce that $\hat u(y)=u^+(x(y))) \to u^+(a)$. If $a=b$, the continuity of $\hat u$ follows since $u^+(b)=u^+(a)=\hat u(a)$. If $a \neq b$, we recall from Lemma \ref{lem:segment} that $\sigma$ is a constant unit vector orthogonal to $[a,b]$. Now $\sigma|_\Oel=\nabla u|_\Oel$, 
so $u$ satisfies
$$
\begin{cases}
\Delta u=0 & \text{ in }\Oel,\\
\partial_\nu u=\e & \text{ on }{]a,b[}
\end{cases}
$$
with $\e=\pm1$.
By elliptic regularity, we infer that $u \in \C^\infty(\Oel \cup \,]a,b[)$. Since $|\nabla u|<1$ in $\Oel$,  $|\nabla u|\leq 1$ on $]a,b[$. Using that $|\partial_\nu u|=1$ on that set, it follows that $\partial_\tau u=0$ on  $]a,b[$.  The  trace $u^+$ of $u|_{\Oel}$ on $\Sigma$ is therefore constant on $]a,b[$, hence, by continuity, 
\begin{equation}\label{eq:constant-trace} u^+ \mbox{ is constant on }[a,b].
\end{equation}
The continuity of $\hat u$ thus follows in that case as well.

We next prove that the function $\hat u$ is continuous in $\O \cap \overline{\mathbf C} \cap \overline H$. In view of Lemma \ref{lem:cont_u}, we get the continuity of $\hat u$ in $(\O \cap \overline{\mathbf C} \cap \overline H) \setminus [a,b]$. To check the continuity of $\hat u$ on $[a,b]$, let us consider a point $x \in \partial^c\mathbf C=[a,b]$ and a sequence $\{x_n\}_{n \in \N}$ in $\O \cap \overline{\mathbf C} \cap \overline H$ such that $x_n \to x$. 
\begin{itemize}
\item If $x_n \in \mathring{\mathbf C}$ for $n$ large enough, then the closed line segment $[a_n,b_n]:=L_{x_n} \cap \overline{\mathbf C}$ converges in the sense of Hausdorff to $[a,b]$. In particular, up to an interchange of $a_n$ with $b_n$, $a_n \to a$ and $b_n \to b$. Moreover,  $a_n=z_{x_n}$ for all $n \in \N$. Thus, using that $u^+$ is continuous on $\Sigma$, we get
$$\hat u(x_n)=u^+(a_n) \to u^+(a)=\hat u(x),$$
where we used that $u^+$ is constant on the segment $[a,b]$ if $a\neq b$.
\item If, for a (not relabeled) subsequence, $x_n \in \Sigma$, using that $\hat u|_{\Sigma \cap \overline H}$  is continuous on $\Sigma \cap  \overline H$ by Step 1, we immediately get that $\hat u(x_n) \to \hat u(x)$.
\end{itemize}

Finally,  Lemma \ref{lem:cont_u} shows that $\hat u$ and $u$ belong to the same equivalence class since the definition \eqref{eq:hatu} of $\hat u$ is consistent with that given in Lemma \ref{lem:cont_u}.

\medskip

{\sf Case 3b.} If $S = \partial^c\mathbf F$ is the characteristic boundary of a {boundary} fan $\mathbf F$, by Proposition \ref{prop:struct-fans} and since $a \in \O$,  it must be that $S=[a,b[$ where $b=g(1)$ is the apex of $\mathbf F$. Let $t_0 \in \; ]0,t_a[$ be such that the characteristic line $L_{g(t_0)}$ passes through the point $b$. We denote by $H$ the {open} half-plane such that $\partial H=L_{g(t_0)}$ and $H$ contains $[a,b[$.

For all $x \in {\overline{\mathbf F} \cap \overline H \setminus \{b\}}$ we denote by $z_x \in g([t_0,t_a])$ the unique intersection point of $L_x$ with {$g([t_0,t_a])$}, and we set
$$\hat u(x):=u^+(z_x).$$
Using the continuity of $u^+$ and arguing as in Case 3a, we infer that $\hat u$ is continuous in $ {\overline{\mathbf F} \cap \overline H \setminus \{b\}}$. Moreover, by Theorem 6.2 and Proposition 6.3 in \cite{BF}, there exists an $\HH^1$-negligible set $Z \subset \mathbf F \cap  H \setminus \{b\}$ such that $u^+=u^-$ on $\Sigma \cap  H \setminus Z$ and $u$ is constant along $L_x \cap  \Sigma \cap  H$ for all $x \in ( \Sigma \cap  H) \setminus \bigcup_{z \in Z} L_z$. Using the change of variable in polar coordinates (with origin given by $b$) together with Fubini's Theorem, we get that $\LL^2((\bigcup_{z \in Z} L_z) \cap \mathbf F)=0$. Moreover, by construction, $u=\hat u$ in $( \Sigma \cap  H) \setminus \bigcup_{z \in Z} L_z$, hence $u=\hat u$ $\LL^2$-a.e. in {$\mathbf F \cap H$}.
\end{proof}

In the sequel, we will identify $u$ with its continuous representative in the set $\omega$.

\begin{remark}
Under the additional assumption that the Dirichlet boundary data $w$ is continuous on $\partial\O$, a possible generalization of Theorem \ref{thm:cont-u} to a global continuity property of $u$ in the entirety of $\O$ will in particular hinge on the feasibility  of extending Merlet's Lemma (see Lemma in Appendix) to exceptional $\HH^1$-negligible sets $Z$ contained in the exterior boundary of $\Opl$, {\it i.e.}, $\partial\O \cap \partial\Opl$.
\hfill\P
\end{remark}

\begin{remark}{
As in Remark \ref{rem:cont-sig--mulcomp}, a  result similar to that of Theorem \ref{thm:cont-u} holds when
$$\Opl=\bigcup_{i=1}^N \O_{pl}^i,$$
where $ \O_{pl}^1,\ldots,\O_{pl}^N$ are open convex sets satisfying $\overline \O_{pl}^i \cap\overline \O_{pl}^j =\emptyset$ for all $1 \leq i \neq j \leq N$. In that case, defining for all $1 \leq i \leq N$
$$\Sigma_i=\partial \O_{pl}^i \cap \partial \Oel \cap \O, \quad \omega_i= \{x \in \O_{pl}^i : \; \exists \, y \in \Sigma_i \setminus \partial^c\O_{pl}^i \text{ such that } x \in L_y\},$$
 $u$ is continuous in $\Oel \cup \bigcup_{i=1}^N(\Sigma_i \cap \omega_i)$. 
\hfill\P}
\end{remark}

\section{Uniqueness for purely Dirichlet boundary conditions}\label{sec.pdbc}

In this  section, we propose to give conditions under which, with the help of the previously acquired results, uniqueness of the minimizer $u$ in \eqref{eq:minim}, hence also of the minimizer in \eqref{eq.rp}, holds true {under pure Dirichlet boundary conditions.}

\medskip

The following uniqueness theorem holds true.
\begin{theorem}\label{thm.uniq}
Let $\O$ be a simply connected bounded $\C^{1,1}$ domain in $\R^2$ and $w\in L^1(\pa \O)$. Assume that the saturation set $\O_1$ defined in \eqref{eq:O1} satisfies hypothesis {\bf (H)} and has nonempty interior.\footnote{The case where {$\O_1$} has empty interior has already been studied in Theorem \ref{thm:BF1}.} Then the functional $\mathcal I:BV(\O) \to \R$ defined by 
\begin{equation}\label{eq:I}
\mathcal I(u):=\iO W(\nabla u)\, dx+ |D^su|(\O)+\int_{\partial \O}|w-u|\, d\h
\end{equation}
has a unique minimizer in $BV(\O)$.
\end{theorem}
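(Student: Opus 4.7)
The plan is to fix two minimizers $u_1, u_2 \in BV(\Omega)$ of $\mathcal{I}$. Via the equivalent formulation \eqref{eq.rpm} and the strict convexity of $\xi \mapsto \frac12|\xi|^2$, both correspond to admissible triples $(u_i, \sigma, p_i) \in \mathcal{A}_w$ sharing the same stress $\sigma$. It then suffices to prove $u_1 = u_2$ $\LL^2$-a.e.\ in $\Omega$, which I would establish region by region: first in $\Oel$, then across $\Sigma$ into the set $\omega$ of \eqref{eq:omega}, and finally on the residual $\Opl \setminus \omega$.

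The first two regions are the easier half. In $\Oel$, Lemma \ref{lem:whatinOel} gives $\nabla u_i = \sigma$ $\LL^2$-a.e., so $v:= u_1 - u_2$ is harmonic with $\nabla v \equiv 0$ and hence constant on every connected component $\Oel^j$ of $\Oel$. I would pin down each such constant by using the pure Dirichlet flow rule: on the set where $|\sigma \cdot \nu| < 1$ on $\partial\Oel^j \cap \partial\Omega$, both $u_i$ must equal $w$, forcing $v=0$. Following the pattern of the proof of Theorem \ref{thm:BF1}, I would argue that this positive-$\HH^1$-measure set exists on every component---otherwise Lemma \ref{lem.u=cst} combined with harmonicity would force $u_i$ to be constant on a substantial boundary arc, contradicting $|\sigma|=1$ on $\Sigma \cap \partial\Oel^j$ (Remark \ref{rem:cont-|sigma|-Sigma}) via the maximum principle. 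Once $u_1 = u_2$ on $\Oel$, Theorem \ref{thm:cont-u} extends this equality across $\Sigma$ by continuity, and then along every characteristic segment crossing $\Sigma \setminus \partial^c\Opl$, yielding $u_1 = u_2$ on all of $\omega$.

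The hard part will be $\Opl \setminus \omega$. By Theorem \ref{thm:structOp} and Proposition \ref{prop:structOp2}, every characteristic segment $L\subset \Opl$ failing to reach $\Sigma \setminus \partial^c\Opl$ has both endpoints on $\partial\Omega\cap\partial\Opl$---either in a boundary fan, on the far arcs $\Gamma,\Gamma'$ of a convex component of $\mathscr C$, or as an isolated characteristic. By Theorem \ref{thm:struct-sigma-u}, $\HH^1$-a.e.\ such $L$ carries constant values $c_i^L$ for each $u_i$, and my task is to show $c_1^L=c_2^L$. The Dirichlet flow rule on $\partial\Omega$ shows that at any endpoint $p$ of $L$ with $|\sigma(p)\cdot\nu(p)|<1$, both $u_i(p)=w(p)$, so $c_1^L=c_2^L$. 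My plan is to show, case by case (fans, convex components, isolated lines), that the alternative condition $|\sigma(p)\cdot\nu(p)|=1$ precisely means the characteristic is tangent to $\partial\Omega$ at $p$---in a fan with apex $\hat z$ this reduces to the chord $[\hat z,p]$ being tangent to $\partial\Omega$ at $p$, via $\sigma(p)=\pm (p-\hat z)^\perp/|p-\hat z|$---and then to use the $\C^{1,1}$ regularity of $\partial\Omega$ to conclude that the set of such tangency points is $\HH^1$-negligible in $\partial\Omega\cap\partial\Opl$. A Merlet-type disintegration (in the spirit of Lemma \ref{lem:cont_u} and its appendix lemma) should then upgrade equality on $\HH^1$-a.e.\ characteristic to equality $\LL^2$-a.e.\ on $\Opl\setminus\omega$, completing the proof.

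The main obstacle is making this transversality/tangency dichotomy precise and uniform across the three structural pieces of Theorem \ref{thm:structOp}, and then upgrading from "$c_1^L=c_2^L$ on $\HH^1$-a.e.\ $L$" to $\LL^2$-a.e.\ equality in $\Opl\setminus\omega$ through an appropriate disintegration of Lebesgue measure along characteristics meeting $\partial\Omega$. This is precisely where both hypotheses of the theorem enter essentially: hypothesis \textbf{(H)} controls the geometry of the characteristic fibration inside $\Opl$, while the $\C^{1,1}$ regularity of $\partial\Omega$ prevents pathological tangential behavior of chords from accumulating on a set of positive $\HH^1$-measure on $\partial\Omega$.
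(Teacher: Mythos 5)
Your overall decomposition (uniqueness in $\Oel$ first, then propagation into $\Opl$) matches the paper's, but both halves of your plan have genuine gaps. In $\Oel$, the step you leave vague is precisely the hard one: to rule out $|\sigma\cdot\nu|=1$ $\HH^1$-a.e.\ on $\partial\Oel\cap\partial\O$ you invoke Lemma \ref{lem.u=cst} and ``the maximum principle,'' but the maximum principle needs $u$ constant on \emph{all} of $\partial\Oel$, and you only get constancy on $\partial\Oel\setminus\overline\Sigma$; on $\Sigma$ the trace of $u$ is not constant, so no contradiction follows this way (in Theorem \ref{thm:BF1} the argument closed only because $\Oel=\O$). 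The paper instead proves (Lemma \ref{lem:1135}) that $\sigma=\alpha\nu$ with a \emph{fixed} sign $\alpha=\pm1$ on $\partial\Oel\setminus\overline\Sigma$ (via an $H^{1/2}$ connectedness argument requiring the $\C^{1,1}$ regularity), and then derives a contradiction from the flux balance $0=\int_{\partial\Oel^\e}\sigma\cdot\nu\,d\HH^1$ together with the inequality $\HH^1(\Sigma)<\HH^1(\partial\Oel\cap\partial\O)$, which follows from the convexity of $\Opl$. That divergence-theorem step is the crux and is absent from your sketch.

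In $\Opl$ you take a genuinely different route (fiberwise analysis of characteristics hitting $\partial\O\cap\partial\Opl$, plus a Merlet-type disintegration) instead of the paper's level-set argument: the paper shows via Proposition \ref{prop:level-set} that the superlevel sets $\{u_i>\lambda\}\cap\Opl$ are half-planes cut by characteristic lines, proves they coincide for all but countably many $\lambda$ by a trace/positivity argument on the boundary arc between the two candidate lines, and concludes $Du_1=Du_2$ by the coarea formula. Your alternative founders on the disintegration step: as the paper's Appendix explicitly warns, an $\HH^1$-negligible exceptional set on $\partial\Opl\cap\partial\O$ can sweep a set of \emph{positive} $\LL^2$-measure (the apex of a boundary fan is a single point whose characteristics fill the whole fan), and Merlet's Lemma cannot be applied with $\mathcal C\subset\partial\O$ because it requires $\sigma\in H^1$ up to that arc, which fails near $\partial\O$ (Example \ref{ex.tr}); the paper only ever applies it on arcs $\mathcal C\subset\Sigma$ compactly contained in $\O$. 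So your upgrade from ``equality on $\HH^1$-a.e.\ characteristic'' to ``$\LL^2$-a.e.\ equality on $\Opl\setminus\omega$'' does not go through as stated. (Your tangency dichotomy, by contrast, can be made rigorous: since $\Opl$ is convex and $\partial\O$ is $\C^1$, the tangent line to $\partial\O$ at a contact point $p\in\partial\O\cap\partial\Opl$ supports $\overline\O_{pl}$, so a characteristic meeting $\Opl$ can never be tangent there; but this does not rescue the disintegration.)
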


Let us once more emphasize that, in contrast with  classical least gradient type problems in which the trace of the minimizer(s) is prescribed on the boundary, the solutions to our Dirichlet problem do not necessarily match the boundary condition $w$. This potential mismatch complicates uniqueness.

The strategy of proof of Theorem \ref{thm.uniq} is  simple: first establish uniqueness in the elastic domain $\Oel$, then in the plastic domain $\Opl$. Although the problem stated in the elastic domain may seem  straightforward ($u$  solves a Poisson equation), uniqueness is far from  obvious because, as already observed in Example \ref{ex:1}, $u$ may fail to match the boundary value $w$ on the external boundary $\partial\Oel \cap \partial \O$. The main difficulty  consists in proving that the boundary value is actually attained on a part of the external boundary with positive $\HH^1$-measure. Assuming the contrary would imply that $u$ should be constant on the external boundary and this leads to a contradiction. Once uniqueness in $\Oel$ is established,  uniqueness in $\Opl$ is obtained through  a detailed analysis of the level sets of $u$ and a reconstruction of $Du$ thanks to the $BV$-coarea formula.

\medskip

The rest of this section is devoted to the proof of Theorem \ref{thm.uniq}. Let $u_1$ and $u_2 \in BV(\O)$ be two minimizers of $\mathcal I$. We denote by $(u_1,\sigma,p_1)$ and $(u_2,\sigma,p_2)$ the associated solutions to the plasticity problem \eqref{eq:plast}{with $\partial_D\O=\partial\O$} (we recall that the stress $\sigma$ is unique). First, we remark that, because $\sigma$ is uniquely defined, the set $\Oel$ is uniquely defined, independently of the minimizer under consideration. In the sequel we will sometimes denote by $u$ either function $u_1$, or $u_2$.

\subsection{Uniqueness in $\Oel$}\label{subsec:un-el}

If $\Oel=\emptyset$, then one should proceed directly to Subsection \ref{sec.unopl}. So, from now onward in this subsection, we assume that $\Oel\ne\emptyset$.

By arguing in each connected component of $\Oel$, there is no loss of generality in assuming that $\Oel$ itself is connected, and consequently that $\Sigma$ is connected as well. Since $\sigma$ is unique and $\sigma=\nabla u_1=\nabla u_2$ in the connected set $\Oel$, there exists and constant $c \in \R$ such that $u_1-u_2=c$. We are thus tasked with showing that $c=0$. 

\medskip

Assume first that there exists an $\HH^1$-measurable set $A \subset \partial\Oel \cap \pa\O$  with $\HH^1(A)>0$ such that $|\sigma\cdot\nu|<1$ $\HH^1$-a.e. on $A$. Then the flow rule in item (i) of Remark \ref{rmk:jump-flow-rule} implies that $u_1=w$ and $u_2=w$ $\HH^1$-a.e. on $A$. Since $u_1-u_2\equiv c$ in $\Oel$, $c=0$ and 
\begin{equation}\label{eq.un-eldom}
u_1=u_2 \mbox{ in }\Oel.
\end{equation} 
We are thus left with the case where 
$$|\sigma\cdot\nu|=1 \;\; \h\text{-a.e. on }\partial\Oel \cap \partial \O.$$
Our goal in the rest of {this Subsection} is to show that the latter never happens.
\bigskip

Since {$\sigma=\nabla u$ in $\Oel$, then $\sigma\cdot\nu=\partial_\nu u$} on $\pa\Oel \cap \partial\O$ and 
\begin{equation}\label{eq.int-nd0}
|\sigma\cdot\nu|=|\partial_\nu u|=1\quad \h\mbox{-a.e. on }\pa\Oel \cap \partial\O.
\end{equation}

In all that follows, $g:[0,1]\to \R^2$ is a Lipschitz parametrization of $\overline\Sigma$ with the property that, if $\overline   \Sigma \cap \partial\O \neq \emptyset$, $g(0)$ and $g(1)$ are the intersection points of $\overline\Sigma$ with $\pa\O$. 

\medskip

The functions $\sigma$ {and $u$} enjoy good properties on $\partial \Oel \setminus \overline \Sigma$. This is the object of the following lemma which is the only time we use the regularity assumptions on $\O$, namely $\C^{1,1}$ and simple connectedness.

\begin{lemma}\label{lem:1135}
Assume that $\O$ is a simply connected bounded open set with $\C^{1,1}$ boundary and that 
$$|\sigma\cdot\nu|=1 \;\; \h\text{-a.e. on }{\pa\Oel \setminus \overline \Sigma}.$$
Then the function $u$ is constant on $\pa\Oel \setminus \overline \Sigma$. Moreover, there exists a constant $\alpha \in \{-1,1\}$ such that
$$\sigma=\alpha\nu \quad \text{ on }\pa\Oel \setminus \overline \Sigma,$$
where $\nu$ is the outer unit normal to $\pa\Oel \setminus \overline \Sigma \subset \partial\O$.
\end{lemma}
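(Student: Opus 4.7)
The plan is to prove the lemma in three steps: identify the geometry of $\pa\Oel\setminus\overline\Sigma$, apply Lemma \ref{lem.u=cst} to get constancy of $u$, and use elliptic regularity up to $\C^{1,1}$ boundary to upgrade to a single sign.

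\emph{Step 1 (topology).} First, I would argue that $\Gamma := \pa\Oel\setminus\overline\Sigma$ is a single connected, relatively open, $\C^{1,1}$ subarc of $\pa\O$. The reduction made at the start of this subsection gives $\Oel$ connected; combined with the convexity of $\overline\O_{pl}$ and the simple connectedness of $\O$, this forces $\overline\O_{pl}\cap\pa\O$ to be either empty or a single connected closed subarc of the Jordan curve $\pa\O$. (Otherwise, two disjoint closed components $K_1, K_2$ would yield a Jordan curve in $\overline\O$ obtained by joining a point $p_i \in K_i$ via the segment $[p_1,p_2]\subset\overline\O_{pl}$ and by an arc of $\pa\O$ between $p_1,p_2$; this curve would separate $\Oel$ into at least two components.) In the first case $\Gamma=\pa\O$, in the second case $\Gamma$ is $\pa\O$ with one closed arc (together with the two endpoints $g(0),g(1)$) removed, hence again an open connected subarc of $\pa\O$. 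Either way, $\Gamma$ inherits $\C^{1,1}$ regularity from $\pa\O$.

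\emph{Step 2 (constancy of $u$).} By Lemma \ref{lem:whatinOel}, $u$ is harmonic in $\Oel$ with $|\nabla u|=|\sigma|<1$, so locally $1$-Lipschitz there; the standing assumption gives $|\pa_\nu u|=|\sigma\cdot\nu|=1$ $\h$-a.e. on the connected, relatively open Lipschitz arc $\Gamma$. Lemma \ref{lem.u=cst}, applied with $U=\Oel$ and $\Gamma$, then delivers a constant $c\in\R$ with $u\equiv c$ on $\Gamma$.

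\emph{Step 3 (constancy of the sign).} To promote $\sigma=\pm\nu$ $\h$-a.e. on $\Gamma$ to $\sigma=\alpha\nu$ on $\Gamma$ with a \emph{single} $\alpha\in\{-1,1\}$, I would set $v:=u-c$. Then $v$ is harmonic in $\Oel$, belongs to $H^1(\Oel)$ (since $u\in BV(\O)\subset L^2(\O)$ by Sobolev embedding and $\nabla u=\sigma\in L^\infty(\O;\R^2)$), and has vanishing trace on the $\C^{1,1}$ portion $\Gamma$ of $\pa\Oel$. By classical elliptic regularity up to a $\C^{1,1}$ boundary with zero Dirichlet data (e.g., \cite[Theorem 8.34]{GT}), $v\in\C^{1,\beta}_{\rm loc}(\Oel\cup\Gamma)$ for some $\beta\in(0,1)$, so $\sigma=\nabla v$ extends continuously to $\Oel\cup\Gamma$. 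Since $v\equiv 0$ on $\Gamma$ forces $\pa_\tau v=0$ there, we get $\sigma=(\pa_\nu u)\nu$ pointwise on $\Gamma$ with $\pa_\nu u\in\{-1,+1\}$ $\h$-a.e. As $\pa_\nu u$ is continuous on the connected set $\Gamma$ and takes values in $\{-1,+1\}$ almost everywhere, it must coincide with a single constant $\alpha\in\{-1,+1\}$, yielding $\sigma=\alpha\nu$ on $\Gamma$.

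The main obstacle, in my view, lies in Step 1: identifying $\Gamma$ as a single connected arc is precisely where the simple connectedness of $\O$ and the convexity of $\overline\O_{pl}$ are truly used, in conjunction with the preliminary reduction to connected $\Oel$. Once the geometry is pinned down, the constancy of $u$ follows directly from Lemma \ref{lem.u=cst}, and the constancy of the sign is a standard consequence of elliptic boundary regularity for harmonic functions on a $\C^{1,1}$ domain with homogeneous Dirichlet data.
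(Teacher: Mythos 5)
Your proposal is correct, and Steps 1 and 2 coincide with the paper's argument: the paper likewise asserts (with less detail than you provide) that simple connectedness of $\O$, convexity of $\overline\O_{pl}$ and the standing reduction to connected $\Oel$ make $\pa\Oel\setminus\overline\Sigma$ an open connected Lipschitz arc, and then invokes Lemma \ref{lem.u=cst} to get the constancy of $u$. Where you genuinely diverge is in Step 3. The paper stays in the Sobolev framework: it uses $H^2$ regularity of $u$ near a $\C^{1,1}$ boundary portion on which $u$ is constant to conclude that $\sigma\cdot\nu\in H^{1/2}_{\rm loc}(\pa\Oel\setminus\overline\Sigma)$ takes values in $\{-1,1\}$, and then appeals to the rigidity of $H^{1/2}$ functions valued in $\{\pm 1\}$ (the same \cite[Lemma A.3]{BF} used elsewhere in the text) to force a single sign. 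You instead invoke Schauder-type $\C^{1,\beta}$ regularity up to a $\C^{1,\alpha}$ boundary portion with homogeneous Dirichlet data (the local statement is \cite[Corollary 8.36]{GT} rather than Theorem 8.34, a minor point), so that $\nabla u$ extends continuously to $\Gamma$ and $\partial_\nu u$, being continuous and a.e. valued in $\{-1,1\}$ on a connected set, is a single constant. Both routes consume the $\C^{1,1}$ hypothesis in the same place; yours yields the slightly stronger by-product that $\sigma$ is continuous up to $\Gamma$, while the paper's keeps the argument uniform with its other uses of the $H^{1/2}$ rigidity lemma. Either way the conclusion $\sigma=\alpha\nu$ with a single $\alpha\in\{-1,1\}$ follows, so the proof is complete.
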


\begin{proof}
First note that, since $\O$ is simply connected, then $\pa\Oel \setminus \overline \Sigma$ is an open connected and Lipschitz subset of $\partial\Oel \cap \partial \O$. Since $u$ is harmonic in $\Oel$ and since, by \eqref{eq.int-nd0}, $|\partial_\nu u|=1$ on $\pa\Oel \setminus \overline \Sigma$, Lemma \ref{lem.u=cst} immediately ensures that $u$ is constant on $\pa\Oel \setminus \overline \Sigma$.

Let $B_R(x_0)$ be a ball centered at $x_0 \in\pa\Oel \setminus \overline \Sigma$ such that $\overline B_R(x_0) \cap \overline \Sigma=\emptyset$. Since $u$ is harmonic in $\O \cap B_R(x_0)$ and constant on $\partial \O \cap B_R(x_0)$ which is of class $\C^{1,1}$, elliptic regularity ensures that $u \in H^2(\O \cap B_r(x_0))$ for all $r<R$. Thus, $\sigma=\nabla u \in H^1(\O \cap B_r(x_0);\R^2)$ and, because the exterior normal $\nu$ is Lipschitz continuous, 
$$
\sigma\cdot\nu \in H^{1/2}(B_r(x_0) \cap\pa\O).
$$
Recalling \eqref{eq.int-nd0}, we thus conclude that $\sigma\cdot\nu$ is in $H^{1/2}_{\rm loc}(\pa\Oel \setminus \overline \Sigma;\{-1,1\})$ and thus must remain constant along $\pa\Oel \setminus \overline \Sigma$ (see {\it e.g.} \cite[Lemma A.3]{BF} in the case of a flat boundary). We thus get the desired expression for $\sigma$ on $\pa\Oel \setminus \overline \Sigma$.
\end{proof}

A first consequence of Lemma \ref{lem:1135} is that $g(0) \neq g(1)$, and thus
\begin{equation}\label{eq:g0g1}
\overline\Sigma \cap \partial\O \neq \emptyset.
\end{equation}
Indeed, if $g(0)=g(1)$ then $\overline\Sigma \cap \partial\O \subset \{g(0)\}$ and $\sigma=\alpha\nu$ on $\partial\O \setminus \{g(0)\}$ for some $\alpha \in \{-1,1\}$. As $\sigma$ is divergence free in $\O$, we get that
$$0=\int_\O {\rm div}\sigma\, dx =\int_{\partial\O}\sigma\cdot \nu\, d\HH^1=\alpha \HH^1(\partial\O),$$
which is impossible. It also implies that $g:[0,1] \to \overline \Sigma$ is one to one.

\medskip

In view of \eqref{eq:g0g1} we can conclude that \eqref{eq.int-nd0} cannot hold if $\partial\O$ is of class $\C^{1,1}$. Indeed, {for $\e$ small}, consider the Lipschitz domain
{$\O_{el}^\e:= \Oel\setminus[B_\e(g(0))\cup B_\e(g(1))]$. Its boundary is $\bigcup_{i=1}^4 \partial\O_i^\e$ with
$$\begin{cases} 
\partial\O_1^\e:=(\partial\Oel\cap\partial\O)\setminus [B_\e(g(0))\cup B(g(1))]\\[1mm]
\partial\O_2^\e:= \Sigma\setminus [B_\e(g(0))\cup B(g(1))]\\[1mm]
\partial\O_3^\e:=\partial B_\e(g(0))\cap \Oel\\[1mm]
\partial\O_4^\e:=\partial B_\e(g(1))\cap \Oel.
\end{cases}
$$
In view of Lemma \ref{lem:1135},
$$
\lim_{\e\to 0} \int_{\partial\O_1^\e}\sigma\cdot\nu\ d\h=\alpha \h(\partial\Oel\cap\partial\O),
$$
while, since $\sigma$ is continuous on $\O\setminus \mathcal Z$ and $|\sigma|\le 1$,
$$
\left|\int_{\partial\O_2^\e}\sigma\cdot\nu\ d\h\right| \le \h(\Sigma), \qquad \left|\int_{\partial\O_{i}^\e}\sigma\cdot\nu\ d\h\right| \leq 2\pi \e \to 0
$$
for $i=3$, $4$. Since
$$
0=\lim_{\e\to 0}\int_{\O_{el}^\e} {\rm div}\sigma \ dx=\lim_{\e\to 0}\sum_{i=1,4}\int_{\partial\O_i^\e}\sigma\cdot\nu\ d\h,
$$
we conclude that, for $\alpha = \pm 1$,
$$
\alpha \h(\partial\Oel\cap\partial\O)-\h(\Sigma)\le 0\le \alpha \h(\partial\Oel\cap\partial\O)+\h(\Sigma).
$$
This is however impossible since $\h(\Sigma)<\h(\partial\Oel\cap\partial\O)$ because of the convexity of $\Opl$ and of the non empty character of $\Oel$.}

\bigskip

In conclusion of  Subsection \ref{subsec:un-el}, we have established that it cannot be so that  $|\sigma\cdot\nu|=1$ $\h$-a.e. on $\partial\Oel \cap \partial \O$, 
which, in view of \eqref{eq.un-eldom}, establishes the uniqueness of $u$ in $\Oel$.

\subsection{Uniqueness in $\Opl$}\label{sec.unopl}

If $u_1$ and $u_2$ are two minimizers of the functional $\mathcal I$ defined in \eqref{eq:I}, we have proved so far that $u_1=u_2$ in $\Oel$, so that the trace of ${u_1}|_{\Oel}$ and ${u_2}|_{\Oel}$ coincide at the interface $\Sigma$ between $\Oel$ and $\Opl$. 
We denote by $\widetilde u$ this common value and define
$$\widetilde w:=w {\bf 1}_{\partial\Opl \cap \partial\O} + \widetilde u {\bf 1}_\Sigma \in L^1(\partial\Opl).$$
The functions $u_1$ and $u_2$ are thus solutions to the plasticity problem \eqref{eq:plast} in $\Opl$ with boundary data given by (the unique function) $\widetilde w$. Equivalently, $u_1|_\Opl$ and $u_2|_\Opl$ are minimizers of 
$$v \in BV(\Opl) \mapsto \int_\Opl W(\nabla v)\, dx + |D^s v|(\Opl) + \int_{\partial\Opl}|v-\widetilde w|\, d\HH^1.$$

Using the flow rule (see Remark \ref{rmk:jump-flow-rule}-(i)), since $|\sigma\cdot \nu|<1$ on $\partial\Opl \setminus \partial^c\Opl$, the boundary value $\widetilde w$ is attained on $\partial\Opl \setminus \partial^c\Opl$. In other words, 
\begin{equation}\label{eq:bdary-value}
u_1=u_2=\widetilde w \quad \HH^1\text{-a.e. on }\partial\Opl \setminus \partial^c\Opl,
\end{equation}
where, in the light of Corollary \ref{cor:char-bound}, $\partial^c\Opl$ is made of at most two connected components which are line segments. In order to show that $u_1=u_2$ $\LL^2$-a.e. in $\Opl$ we follow the proof of \cite[Theorem 1.1]{Gorny} which we adapt to our setting. 

\medskip

We first notice that there are at most countably many $\lambda$'s in $\R$ such that $\HH^1(\{\widetilde w=\lambda\} \cap \partial\Opl)>0$. Consider $\lambda \in \R$ such that $\HH^1(\{\widetilde w=\lambda\} \cap \partial\Opl)=0$. For $i=1$, $2$, we define the superlevel sets of $u_i$ by
$$E^i_\lambda=\{u_i >\lambda\} \cap \Opl.$$

\medskip

 Assume first that $0<\LL^2(E_\lambda^i)<\LL^2(\Opl)$ for all $i=1$, $2$. By Proposition \ref{prop:level-set}, the superlevel sets are of the form $E^i_\lambda=H_\lambda^i \cap \Opl$, where $H_\lambda^i$ are {open} half-planes such that $L^i_\lambda=\partial H^i_\lambda$ is a characteristic line and
$$
\begin{cases}
u_i < \lambda \quad \LL^2\text{-a.e. in }\Opl \setminus {\overline H^i_\lambda},\\
u_i > \lambda \quad \LL^2\text{-a.e. in }\Opl \cap { H^i_\lambda}.
\end{cases}
$$
Let us consider the set {$C_\lambda:=E^1_\lambda \setminus \overline E^2_\lambda=(H^1_\lambda \setminus \overline H^2_\lambda) \cap \Opl$} and assume that $\LL^2(C_\lambda)>0$. {This set is nonempty, open, convex,} and its boundary contains the line segments $L_\lambda^1 \cap  \O_{pl}$ and $L_\lambda^2 \cap  \O_{pl}$. Moreover, $\partial C_\lambda \setminus [(L_\lambda^1 \cap  \O_{pl}) \cup (L_\lambda^2 \cap  \O_{pl})]=\partial C_\lambda \cap \partial\Opl$ has at least one connected component $\Gamma_\lambda$ with $\HH^1(\Gamma_\lambda)>0$. Note also that, according to Lemma \ref{lem:char-bound}, $\Gamma_\lambda \cap \partial^c\Opl=\emptyset$ so that $u_1$ and $u_2$ reach the boundary value $\widetilde w$ on $\Gamma_\lambda$, {\it i.e.}
$$u_1=u_2=\widetilde w \quad \HH^1\text{-a.e. on }\Gamma_\lambda.$$
By definition of the set $C_\lambda$, {$u_1 > \lambda$ and $u_2 < \lambda$ $\LL^2$-a.e. in $C_\lambda$}. Therefore, by positivity of the trace operator in $BV$ (see {\it e.g.} \cite[Lemma 2.2]{Gorny}), we infer that $u_1 \geq \lambda$ and $u_2 \leq \lambda$ $\HH^1$-a.e. on $\Gamma_\lambda$. But since $u_1=u_2=\widetilde w$ $\HH^1$-a.e. on $\Gamma_\lambda$, we deduce that $\widetilde w=\lambda$ $\HH^1$-a.e. on $\Gamma_\lambda$ which is against our choice of $\lambda$. We have thus proved that $\LL^2(C_\lambda)=0$ for all but countably many $\lambda \in \R$, and thus that
$\{u_1>\lambda\} \cap \Opl=H_\lambda^1 \cap \Opl=H_\lambda^2 \cap \Opl=\{u_2>\lambda\}\cap \Opl$ up to an $\LL^2$-negligible set, except possibly for countably many $\lambda$'s in $\R$.

\medskip

If either $\LL^2(E^i_\lambda)=0$ or $\LL^2(E^i_\lambda)=\LL^2(\Opl)$ for $i=1$ or $2$,  one of the sets $E^i_\lambda$ must be $\emptyset$ or $\Opl$ {(up to an $\LL^2$-negligible set)} so that, defining $C_\lambda$ appropriately with $\LL^2(C_\lambda)>0$, we obtain, as before,  that  $u_1 > \lambda$ and $u_2 < \lambda$ $\LL^2$-a.e. on $C_\lambda$, thereby reaching  a contradiction by the same argument as before.

\medskip

In all cases, we conclude that $\{u_1>\lambda\} \cap \Opl=\{u_2>\lambda\} \cap \Opl$ up to an $\LL^2$-negligible set, for all but at most countably many $\lambda$'s in $\R$. The Fleming-Rishel coarea formula in $BV$ (see \cite[Theorem 3.40]{AFP}) then yields
$$Du_1 \res \Opl=\int_\R D\chi_{\{u_1>\lambda\} \cap \Opl}\, d\lambda=\int_\R D\chi_{\{u_2>\lambda\} \cap \Opl}\, d\lambda=Du_2 \res \Opl.$$
The set $\Opl$ being convex, {hence connected},  $u_1-u_2=c$ for some constant $c \in \R$. Using \eqref{eq:bdary-value} and because $\HH^1(\partial \Opl \setminus \partial^c\Opl)>0$, we deduce that $c=0$ and that $u_1=u_2$ $\LL^2$-a.e. in $\Opl$.

\medskip

In conclusion of Subsection  \ref{sec.unopl}, we have established  the uniqueness of $u$ in $\Opl$.

\section{Definition and properties of the characteristic flow}\label{sec.cf}

In this last section the continuity result for $\sigma$ established in Theorem \ref{thm:cont_sigma} guarantees the existence (not the uniqueness) of the characteristic curves \eqref{eq:char-curve}. We analyze their local properties, in particular at the interface $\Sigma$ between $\Oel$ and $\Opl$, as well as their topological properties that excludes geometric situations such as loops.

\medskip

Let $x \in \O \setminus\mathcal Z$, where $\mathcal Z$ is the exceptional discontinuity set of $\sigma$ made of at most two points (see Theorem \ref{thm:cont_sigma}). Since, by Theorem \ref{thm:cont_sigma}, $\sigma \in \C^0(\O \setminus \mathcal Z;\R^2)$, the Cauchy-Peano Theorem yields the existence of a maximal open interval $I=I_x=\; ]\alpha_x,\beta_x[$ containing $0$, and of $\gamma=\gamma_x \in \C^1(I_x;\R^2)$ such that $(\gamma_x,I_x)$ is a maximal solution  of the ODE
\begin{equation}\label{eq:ODE}
\begin{cases}
\gamma_x(t) \in \O\setminus \mathcal Z & \text{ for all }t \in \; I_x,\\
\dot \gamma_x(t)=\sigma^\perp(\gamma_x(t))& \text{ for all }t \in \; I_x,\\
\gamma_x(0)=x.&
\end{cases}
\end{equation}

Either $\beta_x=+\infty$, or $\beta_x<+\infty$. In the former case, the solution is global on the right. In the latter case, since $|\sigma|\leq 1$, the mapping $\gamma_x$ is $1$-Lipschitz and therefore, the limit
\begin{equation}\label{eq:ext-cont}
q_x:=\lim_{t\to \beta_x}\gamma_x(t) 
\end{equation}
exists. In addition, since for every compact set $K \subset \O \setminus \mathcal Z$, there exists $T_K <\beta_x$ such that $\gamma_x(t) \in (\O \setminus \mathcal Z) \setminus K$ for all $T_K<t<\beta_x$, it follows that $q_x \in \partial \O \cup \mathcal Z$. The curve $\gamma_x$ can thus be extended by continuity to $\beta_x$ with $\gamma(\beta_x)=q_x$. A similar statement holds for $\alpha_x$. We thus denote by 
$$\Gamma=\Gamma_x:=
\begin{cases}
\gamma_x([\alpha_x,\beta_x]) & \text{ if }-\infty <\alpha_x < \beta_x<+\infty,\\
\gamma_x(]-\infty,\beta_x]) & \text{ if } -\infty =\alpha_x < \beta_x<+\infty,\\ 
\gamma_x([\alpha_x,+\infty[) & \text{ if } -\infty <\alpha_x < \beta_x=+\infty,\\
\gamma_x(\R) & \text{ if } I_x=\R
\end{cases}$$
the image of the resulting curve which will be called a characteristic (curve).\footnote{We will  on occasion drop the $x$-dependence in $\gamma_x$, $\Gamma_x$, $p_x$, $q_x$ for simplicity, unless confusion may ensue.}

\medskip

We further observe that, if $\sigma(x)=0$, then $I_x=\R$ and $\Gamma_x=\{x\}$. Since $|\sigma|=1$ in $(\Opl \cup \Sigma) \setminus \mathcal Z$, the zeroes of $\sigma$ are those of $\nabla u$ in $\Oel$, i.e., the critical points of the harmonic function $u$ in $\Oel$ which is a locally finite set of points. Therefore,
\begin{equation}\label{eq.fnp}
\text{every compact set $K \subset \O \setminus \mathcal Z$ contains finitely many zeroes of $\sigma$}.
\end{equation}

\begin{remark}\label{rem:u-constant}
Note that, if for some interval open interval $J \subset I_x$ we have $\gamma_x(t) \in \Oel$ for all $t \in J$, using that $u \in \C^\infty(\Oel)$ and $\sigma|_\Oel=\nabla u|_\Oel$, the chain rule yields
$$\frac{d}{dt}u(\gamma_x(t))=\nabla u(\gamma_x(t))\cdot \dot\gamma_x(t)=\sigma(\gamma_x(t))\cdot \dot\gamma_x(t)=0 \quad \text{ for all }t \in J.$$
Thus, $u$ is constant along the portion of characteristic $\gamma_x(J) \subset \Gamma_x$.\hfill\P
 \end{remark}

Since $\sigma \in \C^\infty(\Oel;\R^2)$ and $\sigma \in \C_{\rm loc}^{0,1}(\Opl;\R^2)$, it results from the Cauchy-Lipschitz Theorem that the  solutions are unique inside $\Oel$ and $\Opl$. In other words, if $x \neq y$, then
\begin{equation}\label{eq:caracOmega0}
\Gamma_x \cap \Oel=\Gamma_y \cap \Oel \text{ or } \Gamma_x \cap \Gamma_y\cap \Oel=\emptyset,
\end{equation}
and
\begin{equation*}\label{eq:caracOmega1}
\Gamma_x \cap \Opl=\Gamma_y \cap \Opl \text{ or } \Gamma_x \cap \Gamma_y\cap \Opl=\emptyset.
\end{equation*}
In particular $\Gamma_x$ and $\Gamma_y$ (if distinct) can only intersect  on $\Sigma \cup \partial\O$. 

\medskip

In the rest of this section, we  establish several global properties of the characteristic flow. We first examine how a characteristic locally behaves when intersecting $\Sigma$.

\begin{proposition}\label{prop:trav-loc}
Let $x \in \O \setminus \mathcal Z$ and $(\gamma_x,I_x)$ be a maximal solution of \eqref{eq:ODE} such that $\Gamma_x \cap \Sigma \neq \emptyset$, and let $t_0 \in [\alpha_x,\beta_x]$ be such that $x_0:=\gamma_x(t_0) \in \Gamma_x \cap \Sigma$.
\begin{itemize}
\item[(i)] If $x_0 \not\in \partial^c\Opl$, then there exists $\delta>0$ small such that, up to a change of orientation, $\gamma(t) \in \Opl$ for all $t \in\; [t_0-\delta,t_0[$ and $\gamma(t) \in \Oel$ for all $t \in \; ]t_0,t_0+\delta]$. 
\item[(ii)] If $x_0 \in S$ for some connected component $S=(a,b)$ of $\partial^c \Opl$, then 
either $\Gamma_x \cap S=S$  or $\Gamma_x \cap S=\{a\}$ {(and then $x_0=a$)} or $\Gamma_x \cap S=\{b\}$ {(and then $x_0=b$)} . 
\end{itemize}
\end{proposition}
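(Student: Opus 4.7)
My plan for (i) is to exploit the convexity of $\Opl$ together with the fact that the characteristic line $L_{x_0}$ meets $\Opl$ transversally at $x_0$. Since $x_0 \notin \partial^c\Opl$, there exists $z \in \Opl$ with $x_0 \in L_z$, so $z = x_0 + s_0\sigma^\perp(x_0)$ for some $s_0 \neq 0$; the two possible signs of $s_0$ correspond to the ``change of orientation'' mentioned in the statement, so I may assume $z = x_0 - \epsilon\sigma^\perp(x_0)$ with $\epsilon > 0$. Choose a supporting half-plane $H = \{y : \nu\cdot(y-x_0) \le 0\} \supset \overline\Opl$ for $\Opl$ at $x_0$. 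Because $z \in \Opl$ is interior to this convex set, $\nu\cdot(z-x_0) < 0$ strictly, which translates into the non-tangency $\nu\cdot\sigma^\perp(x_0) > 0$. Coupled with the $\C^1$ Taylor expansion $\gamma_x(t) = x_0 + (t-t_0)\sigma^\perp(x_0) + o(t-t_0)$, this yields $\nu\cdot(\gamma_x(t)-x_0) > 0$ for $t > t_0$ close, forcing $\gamma_x(t) \notin \overline\Opl$ and hence $\gamma_x(t) \in \Oel$. For $t < t_0$ close I use the standard convexity fact that $B_r(z) \subset \Opl$ implies $B_{sr}((1-s)x_0 + sz) \subset \overline\Opl$ for every $s \in (0,1]$; setting $s = (t_0-t)/\epsilon$ gives a ball of radius $(t_0-t)r/\epsilon$ centered at $x_0 + (t-t_0)\sigma^\perp(x_0)$ lying in $\overline\Opl$, and the Taylor error $o(t_0-t)$ is smaller than this radius for $t$ close to $t_0$, placing $\gamma_x(t)$ strictly inside $\Opl$.

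For (ii), the case $S = \{a\}$ degenerate is immediate. Assume $S = (a,b)$ is non-degenerate; by Remark~\ref{rem:sigma-segment}, $\sigma \equiv \epsilon\nu_S$ on $S$ for some $\epsilon = \pm 1$ and the constant unit normal $\nu_S$, hence $\sigma^\perp \equiv \epsilon\tau_S$ (tangent) on $S$. The integral form of~\eqref{eq:ODE} shows that whenever $\gamma_x([t_1,t_2]) \subset S$, one has $\gamma_x(t) = \gamma_x(t_1) + (t-t_1)\epsilon\tau_S$ on that interval, i.e., linear motion along $S$. The crucial point is to rule out that $\gamma_x$ exits $S$ at a point $x^* \in \mathring S$. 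A drift into $\Opl$ would, by Cauchy--Lipschitz uniqueness in $\Opl$ (where $\sigma$ is locally Lipschitz), confine $\gamma_x$ to a single characteristic line $L \subset \Opl$; continuity then forces $x^* \in L \cap \overline\Opl$, so $x^* \in L_z$ for any $z \in L \cap \Opl$, contradicting $x^* \in \partial^c\Opl$. A drift into $\Oel$ is ruled out by combining \eqref{eq:constant-trace}, which gives $u^+ \equiv c_S$ on $[a,b]$ for some constant $c_S$, with $|\partial_{\nu_S}u| = 1$ on $S$: the latter forces $u$ to be strictly monotone in the $\nu_S$ direction on the $\Oel$ side of $x^*$, so $u \neq c_S$ throughout some $\Oel$-neighborhood of $x^*$. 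By Cauchy--Lipschitz uniqueness in the smooth region $\Oel$, any $\Oel$-trajectory of $\gamma_x$ must lie on a level set of $u$, whose value must equal $c_S$ by the continuity of $u$ across $\Sigma$ (Theorem~\ref{thm:cont-u}); since this level set does not extend into $\Oel$ near $x^*$, the drift is impossible.

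Combining these exclusions, $\gamma_x$ remains on $S$ until it reaches an endpoint $a$ or $b$. If $x_0 \in \mathring S$, both time directions execute the linear motion until hitting the endpoints, so $\Gamma_x \supset [a,b]$ and $\Gamma_x \cap S = S$. If $x_0 = a$ (symmetrically $b$), one time direction sends $\gamma_x$ into $\mathring S$ and yields $\Gamma_x \cap S = S$ by the same argument, while the other sends $\gamma_x$ away from $[a,b]$ (into $\Opl$, $\Oel$, or outside $\Omega$) and yields $\Gamma_x \cap S = \{a\}$. The main obstacle is the drift analysis in (ii); ruling out drift into $\Oel$ is the delicate point, since it hinges on three separate ingredients: the constant-trace identity~\eqref{eq:constant-trace}, the continuity of $u$ on $\Oel \cup \Sigma$ from Theorem~\ref{thm:cont-u}, and the smoothness of $\sigma$ in $\Oel$ allowing Cauchy--Lipschitz uniqueness there.
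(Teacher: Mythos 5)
Your part (i) is correct and follows the paper's route (convexity of $\Opl$ plus non-tangency of $L_{x_0}$ at $x_0$); your quantitative convexity estimate for the side $t<t_0$ is in fact more explicit than the paper's, which simply asserts that claim "by convexity". In (ii), your treatment of interior points of $S$ is a legitimate variant of the paper's: the paper extends $\sigma=\nabla u$ to a Lipschitz field on a one-sided neighborhood $V'\cup S'$ via elliptic regularity for the Neumann problem and applies Cauchy--Lipschitz there, whereas you combine the fact that elastic arcs of $\gamma_x$ are level curves of $u$ with the strict monotonicity of $u$ in the direction $\nu_S$ near $\mathring S$ and the continuity of $u$ on $\Oel\cup\Sigma$. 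Both mechanisms work for that sub-step.

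There is, however, a genuine gap in (ii): your case analysis only records what happens during a single visit of $\gamma_x$ to $S$, and therefore does not exclude the outcome $\Gamma_x\cap S=\{a,b\}$ with $a\neq b$ --- the trajectory reaching $a$ at some time $t_a$, leaving $[a,b]$ into $\Oel$, and returning to hit exactly $b$ at a later time $t_b$ without ever meeting $\mathring S$. This configuration is compatible with all of your local exclusions, yet it is not among the three alternatives of the statement, so your final assertion that the "away" direction yields $\Gamma_x\cap S=\{a\}$ is unjustified as written. The paper rules this case out with a global topological argument: if it occurred, $\gamma([t_a,t_b])\cup[a,b]$ would be a closed Jordan curve bounding a region $W\subset\Oel$ on which $u$ is harmonic; $u$ is constant on the characteristic arc by Remark \ref{rem:u-constant}, constant on $[a,b]$ because $\partial_\tau u=0$ there (Lemma \ref{lem.u=cst}), and the two constants coincide by continuity of $u$ on $\Oel\cup\Sigma$, so the maximum principle forces $u$, hence $\sigma=\nabla u$, to vanish on $W$, contradicting $|\sigma|=1$ on $]a,b[$. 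Some argument of this kind (or an equivalent one) must be supplied to complete your proof of (ii).
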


\begin{proof}
{\sf Case 1}. Assume first that $x_0 \not\in \partial^c\Opl$. Then, by definition of the characteristic boundary, there exists $x \in \Opl$ such that $x_0 \in L_x$. Thus by convexity of $\Opl$, up to a change of orientation, we can assume that $\gamma(t) \in \Opl$ for all $t \in [t_0-\delta,t_0[$, for some $\delta>0$ small. Using again the convexity of $\Opl$ and the continuity of $\dot \gamma$, the vector $\dot \gamma(t_0)=\sigma^\perp(x_0)$ is not in the tangent cone to $\overline\O_{pl}$ since otherwise $L_{x} \cap \Opl=\emptyset$.

By contradiction, assume that there exists a sequence $\{t_j\}_{j\in \N}$ such that $t_j \searrow t_0$ and $\gamma(t_j) \in \O \cap \overline \O_{pl} $ for all $j$. For any $\tau \in \mathcal N_{x_0}(\overline \O_{pl} )$, the normal cone to $\overline \O_{pl} $ at $x_0=\gamma(t_0)$, 
$$\tau \cdot (\gamma(t_j)-\gamma(t_0))\leq 0.$$
Dividing the previous inequality by $t_j- t_0\geq 0$ and letting $t_j \to t_0^+$ leads to $\tau\cdot \dot \gamma(t_0)\leq 0$. Similarly since $\gamma(t) \in \overline \O_{pl} $ for all $t \leq t_0$, then 
$$\tau \cdot (\gamma(t)-\gamma(t_0))\leq 0.$$
Dividing  by $t-t_0\leq 0$ and letting $t \to t_0^-$ yields $\tau\cdot \dot \gamma(t_0)\geq 0$. Thus $\tau\cdot \dot \gamma(t_0)= 0$, which proves that $\dot \gamma(t_0)$ is orthogonal to $\mathcal N_{x_0}(\overline \O_{pl} )$. In other words, $\dot \gamma(t_0)$ is in the tangent cone to  $\overline\O_{pl}$ which is a contradiction. As a consequence, at the possible expense of decreasing $\delta>0$ if necessary, $\gamma(t) \in \Oel$ for all $t \in \; ]t_0,t_0+\delta]$.

\medskip

{\sf Case 2.} Assume next that $x_0 \in S$ for some connected component $S=(a,b)$ of $\partial^c\Opl$. 

Let us show that either $\Gamma \cap S=S$, or $\Gamma \cap S=\{a\}$ or $\Gamma \cap S=\{b\}$. For this, there is no loss of generality in assuming that $a\neq b$. By Lemma \ref{lem:segment}, $\sigma=\e\nu$ for some $\e=\pm 1$, where $\nu$ is the unit normal to $S$ oriented from $\Oel$ to $\Opl$. 

Assume first that $x_0 \in \; ]a,b[$, and let $S':=\; ]a',b'[$ be an open line segment such that $[a',b'] \subset \; ]a,b[$ and $x_0 \in \; ]a',b'[$. Let $U'$ be a smooth, open set such that $\overline{U}' \subset \Omega$, $\overline{U}' \cap \Sigma = \; [a',b']$ and $V'=U' \cap H$, where $H=\{y \in \R^2: \; (y-x_0)\cdot \nu< 0\}$ is the open half plane disjoint from $\Opl$. Since $V' \subset \Oel$, $u$ satisfies 
$$\begin{cases}
\Delta u=0 & \text{ in } V',\\
\partial_\nu u=\e & \text{ on } S',
\end{cases}$$
and since $S' \subset \partial V'$ is flat, elliptic regularity ensures that $u \in \C^\infty(V'\cup S')$. In particular, $\sigma=\nabla u$ is Lipschitz in $V' \cup S'$, and the Cauchy-Lipschitz Theorem shows the uniqueness of the solution of \eqref{eq:ODE} in $V' \cup S'$. Since $x_0 \in S'$ and $\sigma=\e\nu$ is constant in $[a,b]$, there exists a maximal  interval $J \subset I$ containing $t_0$ such that
$$\gamma(t)=x_0+\e (t-t_0) \nu^\perp \quad \text{ for all }t \in J$$
with $\gamma(J) = {(a,b)}$, and thus $\Gamma \cap S=S$.

If $\Gamma \cap S=\{a,b\}$ with $a\ne b$,  we can, up to a change of orientation,  find $\alpha \leq t_a < t_b \leq \beta$  such that $a=\gamma(t_a)$ and $b=\gamma(t_b)$. Note that $\gamma(]t_a,t_b[) \subset \Oel$ otherwise $\Gamma$ would have to cross the segment $]a,b[$ which is not possible because $]a,b[\; \subset \partial^c\Opl$. Since $\gamma([t_a,t_b]) \cup [a,b]$ is a closed Jordan curve, Jordan's Theorem shows that there is a bounded connected component $W$ of $\R^2 \setminus (\gamma([t_a,t_b]) \cup [a,b])$ such that $\partial W=\gamma([t_a,t_b]) \cup [a,b]$. Since $W \subset \Oel$, then $u$ is harmonic in $W$. Moreover, by Remark \ref{rem:u-constant}, there is $c_1 \in \R$ such that $u\equiv c_1$ on $\gamma([t_a,t_b])$. On the other hand, because $\nabla u=\e\nu$ on $[a,b]$,  appealing to  Lemma \ref{lem.u=cst}, $\partial_\tau u=0$ on $[a,b]$ and $u$ is also constant on $[a,b]$. Thus there exists $c_2 \in \R$ such that $u\equiv c_2$ on $[a,b]$. Since by \eqref{eq.u-cont-el+sig} $u$ is continuous on $\Oel \cup \Sigma$,  $c_1=c_2$ because {$x_0 \in \{a,b\}$ and $x_0 \in \Sigma$}. Finally, $u$ being harmonic in $W$ and constant on $\partial W$, the maximum principle shows that $u$ is constant in $W$, hence  $\sigma|_W=\nabla u|_W=0$. But this is against the continuity of $\sigma$ in $]a,b[$ because $|\sigma|=|\e \nu|=1$ on $]a,b[$. 

In conclusion we get that either $\Gamma \cap S=S$, or $\Gamma \cap S=\{a\}$ or $\Gamma \cap S=\{b\}$.
\end{proof}

 Then, we show  that, if a characteristic curve connects in $\Oel$ two points of the interface $\Sigma$, then the portion of $\Sigma$ in between those two points must contain characteristic boundary points of $\Opl$.

\begin{lemma}\label{lem:confinement}
Let $(\gamma_x,I_x)$ be a maximal solution of \eqref{eq:ODE} with $x\in \O \setminus \mathcal Z$. Assume that {$\alpha_x< t_0<t_1 < \beta_x$} are such that
\begin{equation}\label{eq.gam_in_el_dom}
\begin{cases}
\gamma_x(t_0),\, \gamma_x(t_1) \text{ belong to the same connected component of }  { \Sigma},\\
\gamma_x(t_0)\neq \gamma_x(t_1),\\
\gamma_x(]t_0,t_1[) \subset \Oel,
\end{cases}
\end{equation}
and denote by $\mathcal C$ the open arc in $\Sigma$ joining $\gamma_x(t_0)$ and $\gamma_x(t_1)$. Then, at least one connected component $S$ of $\partial^c\Opl$ is such that $S \cap \mathcal C\neq \emptyset$ and $S \subset \overline{\mathcal C}$. Further, if $S$ is reduced to a single point, then $S \subset \mathcal C$.
\end{lemma}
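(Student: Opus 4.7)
I argue by contradiction, supposing that no connected component $S$ of $\partial^c\Opl$ satisfies both $S \cap \mathcal C \ne \emptyset$ and $S \subset \overline{\mathcal C}$. A preliminary observation I will use throughout: this hypothesis forces $\overline{\mathcal C} \cap \partial^c\Opl = \emptyset$. Indeed, the endpoints $\gamma_x(t_0), \gamma_x(t_1)$ lie on the characteristic lines $L_0 := L_{\gamma_x(t_0)}$ and $L_1 := L_{\gamma_x(t_1)}$, hence are not in $\partial^c\Opl$. A connected component $S$ of $\partial^c\Opl$ meeting $\mathcal C$ is a line segment in $\partial\Opl$ by Corollary \ref{cor:char-bound}; if $S$ extended past $\overline{\mathcal C}$, it would have to straddle $\gamma_x(t_0)$ or $\gamma_x(t_1)$, forcing that endpoint into $\partial^c\Opl$, which is impossible. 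Hence $S \subset \overline{\mathcal C}$, contradicting the standing hypothesis. So no such $S$ meets $\mathcal C$, which gives $\overline{\mathcal C} \cap \partial^c\Opl = \emptyset$.

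For every $y \in \overline{\mathcal C}$, let $f(y) \in \partial\Opl$ denote the second endpoint of the characteristic segment $L_y \cap \overline{\Opl}$; the preliminary observation guarantees that $f$ is well-defined. By Theorem \ref{thm:cont_sigma} (continuity of $\sigma$ on $\O \setminus \mathcal Z$, with $\mathcal Z \subset \partial^c\Opl$ and therefore $\mathcal Z \cap \overline{\mathcal C} = \emptyset$) together with the convexity of $\Opl$, the map $f$ is continuous on $\overline{\mathcal C}$, and $f \circ f = \mathrm{id}$ wherever both compositions are defined.

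The plan then proceeds by a dichotomy: either (a) $f(\overline{\mathcal C}) \subset \overline{\mathcal C}$, or (b) some $y_0 \in \overline{\mathcal C}$ has $f(y_0) \in \partial\Opl \setminus \overline{\mathcal C}$. In case (a), $f : \overline{\mathcal C} \to \overline{\mathcal C}$ is a continuous involution of a topological arc; since $L_0$ has positive length, $f(\gamma_x(t_0)) \ne \gamma_x(t_0)$, so $f$ is not the identity and must be orientation-reversing, swapping the endpoints and, by the intermediate value theorem, admitting a fixed point $y^* \in \mathcal C$. But $f(y^*) = y^*$ forces the characteristic line $L_{y^*}$ to degenerate to a point, so $y^* \in \partial^c\Opl \cap \mathcal C$, contradicting the preliminary observation. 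In case (b), I intend to use the three characteristic lines $L_0$, $L_1$, and $L_{y_0}$: up to replacing $y_0$ by a nearby point of $\mathcal C$ (only needed if $L_0 = L_1$), they are pairwise disjoint in $\overline{\Opl}$ by the Cauchy--Lipschitz uniqueness of characteristics in $\Opl$, their basepoints $\gamma_x(t_0), \gamma_x(t_1), y_0 \in \Sigma \subset \O$ being excluded from being apexes of boundary fans by Theorem \ref{thm:structOp}. The Jordan separation between $\mathcal C$ and the complementary arc $\partial\Opl \setminus \overline{\mathcal C}$ on $\partial\Opl$ then lets me exhibit the three separating half-planes required by Lemma \ref{lem:char-bound}, yielding a contradiction.

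The final statement (that if $S$ is reduced to a single point $p$ then $p \in \mathcal C$) is automatic from the preliminary observation, since $\gamma_x(t_0), \gamma_x(t_1) \notin \partial^c\Opl$. The main obstacle in this plan is case (b): rigorously identifying the three half-planes needed by Lemma \ref{lem:char-bound} from the positions of $L_0$, $L_1$, $L_{y_0}$ relative to $\mathcal C$ and $\partial\Opl \setminus \overline{\mathcal C}$. This will require careful use of the convexity of $\Opl$ together with the fact that the endpoints of each of the three lines separate $\partial\Opl$ into arcs containing the basepoints of the other two.
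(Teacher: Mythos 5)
Your case (a) is a genuinely nice idea and, on its own, is essentially sound: a continuous involution of the arc $\overline{\mathcal C}$ must have a fixed point, and a fixed point of $f$ forces a degenerate chord, hence a point of $\partial^c\Opl$ on $\mathcal C$. Two caveats even there: the continuity of $f$ up to the endpoints requires an argument (the chords could a priori degenerate in the limit, as in Step 1 of Proposition \ref{prop:cont}), and your ``preliminary observation'' that $\gamma_x(t_0),\gamma_x(t_1)\notin\partial^c\Opl$ is circular as written --- invoking ``the characteristic line $L_{\gamma_x(t_0)}$'' presupposes that a characteristic line of $\Opl$ passes through that point, which is exactly what is being claimed. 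An endpoint of $\mathcal C$ can a priori be an endpoint of a connected component of $\partial^c\Opl$; ruling out the bad cases requires Proposition \ref{prop:trav-loc}-(ii) together with the hypothesis $\gamma_x(]t_0,t_1[)\subset\Oel$.

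The fatal gap is case (b). If $y_0$ lies strictly between $\gamma_x(t_0)$ and $\gamma_x(t_1)$ on the arc and $f(y_0)\notin\overline{\mathcal C}$, then the chord $L_{y_0}\cap\overline\O_{pl}$ necessarily \emph{separates} $L_0\cap\overline\O_{pl}$ from $L_1\cap\overline\O_{pl}$: since chords of the convex set $\Opl$ cannot cross, the far endpoint of $L_0$ stays on the same side of $L_{y_0}$ as $\gamma_x(t_0)$, and likewise for $L_1$, so the six endpoints sit on $\partial\Opl$ in the nested cyclic order and there is \emph{no} half-plane bounded by $L_{y_0}$ containing both other chords. This is precisely the configuration (parallel-like or fan-like chords) that Lemma \ref{lem:char-bound} does \emph{not} forbid, so that lemma cannot deliver the contradiction. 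The deeper issue is that your argument in case (b) never uses the hypothesis $\gamma_x(]t_0,t_1[)\subset\Oel$, and without that hypothesis the conclusion is simply false: in fan-type geometries every characteristic line based on $\Sigma$ crosses $\Opl$ to the exterior boundary, realizing case (b) with $\mathcal C\cap\partial^c\Opl=\emptyset$. The paper closes exactly this configuration by a different mechanism: it forms the Jordan domain $U\subset\Oel$ bounded by $\gamma_x([t_0,t_1])$ and $\overline{\mathcal C}$, observes that the characteristic \emph{curves} issued from points $g(s)\in\mathcal C$ are trapped in $U$ (they cannot cross $\Gamma_x$ in $\Oel$), and excludes each possible fate of such a curve --- closing a loop (maximum principle), escaping (intersection with $\Gamma_x$), or terminating at one of the finitely many zeroes of $\sigma$ (continuity of $u$ plus constancy along characteristics) --- before reaching a contradiction through a sup/inf crossing argument. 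Some input of this kind, exploiting the curve inside $\Oel$, is unavoidable; the chord map $f$ alone cannot see it.
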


\begin{proof}
Set $x_0=\gamma(t_0)$ and $x_1=\gamma(t_1)$. According to \cite[Proposition C-30.1]{david}, there exists a Lipschitz mapping $g : [0,1] \to \R^2$ such that $g(0)=x_0$, $g(1)=x_1$ and $g([0,1])$ is a curve in $\Sigma$ joining $x_0$ and $x_1$. 

\medskip

Assume that $\partial^c \Opl \cap g(]0,1[) = \emptyset$, so that, for all $s \in \; ]0,1[$, there exists $x_s \in \Opl$ such that $g(s) \in L_{x_s}$. 
Consider the closed Jordan curve made of the union of $g([0,1])$ and $\gamma([t_0,t_1])$. By Jordan's Theorem, $\R^2 \setminus (g([0,1])\cup \gamma([t_0,t_1]))$ has a bounded connected component $U\subset \Oel$ such that $\partial U=g([0,1])\cup \gamma([t_0,t_1])$. {Moreover, $K=\overline U$ is a compact set contained in $\O \setminus \mathcal Z$.} By \eqref{eq:caracOmega0} and since characteristic curves cannot intersect in $\Oel$, for all $s \in [0,1]$,  $\Gamma_{x_s}\cap \Oel \subset U$. Define
$$r^-=\sup\{s \in \;]0,1[\;  : \exists \; t \in \; ]s,1[ \text{ such that } g(t) \in \Gamma_{x_s}\}$$
and
$$r^+=\inf\{s \in \; ]0,1[\;  : \exists \; t \in \; ]0,s[ \text{ such that } {g(t) \in \Gamma_{x_s}}\}.$$
We have $0<r^- \leq r^+ <1$. 
Assume first that $r^-<r^+$ and let $r \in \; ]r^-,r^+[$. Then, since $g(r) \not\in \partial^c\Opl$, there exists $x_r \in \Opl$ such that $g(r) \in L_{x_r}$. By continuity of $\dot \gamma$ and convexity of $\Opl$, the curve $\Gamma_{x_r}$ intersects $\Sigma$ at $g(r)$ and, by definition of $r^-$ and $r^+$, $\Gamma_{x_r}$ cannot intersect $g([0,1])$ elsewhere. 

The first possibility is that $\Gamma_{x_r}$ forms a loop in $\Oel$, that is that there exist $\alpha_{x_r}\leq t'<s'\leq \beta_{x_r}$ such that $\gamma_{x_r}(t')=\gamma_{x_r}(s')=g(r) \in \Sigma \setminus \partial^c\Opl$ and $\gamma_{x_r}(]t',s'[) \subset \Oel$. Defining {$V$} to be the bounded connected component of $\R^2\setminus \gamma_{x_r}([t',s'])$ (because $ \gamma_{x_r}([t',s'])$ is a closed Jordan curve),  
 Remark \ref{rem:u-constant} shows that $u$ remains constant on $\partial U$, thus, by the maximum principle, on all of {$V$}, hence {on the connected component $\omega$ of $\Oel$ containing $V$}. But then, $\sigma\equiv0$ on {$\omega$} which contradicts the fact that  $|\sigma|=1$ on $\Sigma \setminus \mathcal Z$ by Remark \ref{rem:cont-|sigma|-Sigma}.

The second possibility is that $\Gamma_{x_r}$ leaves every compact which would imply that $\Gamma_{x_r}$ intersects $\gamma([t_0,t_1]) \subset \partial U$, again a contradiction with \eqref{eq:caracOmega0}. 
 
 The third possibility is that $\Gamma_{x_r}$ terminates at a point $y\in U$ which must thus be a zero of $\sigma$. Note that, in such a case, $\alpha_x=-\infty$ or $\beta_x=+\infty$. According to \eqref{eq.fnp} there is at most a finite number of such points {in $K$}. Thus, varying $r\in \; ]r^-,r^+[$, we conclude the existence of $\bar r^-, \bar r^+$ with
$r^-<\bar r^-<\bar r^+<r^+$ such that $\Gamma_{x_r}$ terminates at a fixed point $y\in U$ for all $r\in [\bar r^-, \bar r^+]$. But, since by Theorem \ref{thm:cont-u}, $u$ is continuous on {the open set} $\bigcup_{r\in \,]\bar r^-, \bar r^+[}\Gamma_{x_r}\cap\O$ while, in view of Remark \ref{rem:u-constant}, $u$ is constant along $\Gamma_{x_r}\cap\O$, we conclude that $u$ must be constant on the domain $\bigcup_{r\in \, ]\bar r^-, \bar r^+[}\Gamma_{x_r}\cap\Oel$, which is impossible.

Thus we must have that $r^-=r^+=:r$. If $\Gamma_{x_r} \cap g([0,1])=\{g(r)\}$, the same argument as before leads to a contradiction. Therefore, there exists $r' \neq r$ such that $g(r') \in \Gamma_{x_r}$. Without loss of generality, we can assume that $r'>r$. By definition of $r=r^-$, for every $\tau \in \; ]r,r'[$, there exists $\tau' < \tau$ such that $g(\tau') \in \Gamma_{x_\tau}$. But then, $g(\tau) \in \Gamma_{x_{\tau'}}$, in contradiction with the fact that $\tau>r^-=r$.

So  $\partial^c \Opl \cap g(]0,1[)$ cannot be empty and there exists a connected component of $\partial^c\Opl$, namely a line segment $S=(a,b)$, such that $S \cap g(]0,1[)\ne \emptyset$. If $a=b$, $S=\{a\}$ and $a \in  g(]0,1[)$. If $a \neq b$ and {\it e.g.} $x_1 \in \; ]a,b[$, then, by {Proposition \ref{prop:trav-loc}-(ii)}, $]a,b[\; \subset \Gamma_x$. Thus, there exists $\delta>0$ such that $\gamma_x(t) \in S \subset \Sigma$ for all $t \in \; ]t_1-\delta,t_1+\delta[$ which is impossible since $\gamma_x(]t_0,t_1[) \subset \Oel$.  As a consequence $x_1 \not\in \; ]a,b[$ and the same goes for $x_0$. It results that $]a,b[\; \subset g(]0,1[)$, hence $S \subset g([0,1])$.
\end{proof}

\begin{remark}\label{rem:charac_in_Sigma}
 The counterpart of the previous Lemma also holds true, namely that, if $L$ a characteristic line intersecting (the same connected component of) $\Sigma$ at two {distinct} points $g(s)$ and $g(t)$ of $\Sigma$, then there exists a line segment $S=[a,b] \subset \partial^c\Opl$ such that $S \subset g(]s,t[)$.

Indeed, denoting by $H$ the open half-space such that $[g(s),g(t)] \subset \partial H$ and containing $g(]s,t[)$, then the convex set $\Opl \cap H$ is contained in a connected component $\mathbf C$ of $\mathscr C$ with nonempty interior and $g(]s,t[) \subset \partial \Opl \cap \partial\mathbf C$. But then, {Theorem} \ref{prop:structOp2}-(i) implies the result.

This remark, together with  Lemma \ref{lem:confinement}, establishes that if a characteristic curve intersects the same connected component of $\Sigma$ twice at different points, then the closure of that component must contain one of the two connected components  of $\partial^c\Opl$.
\hfill\P
\end{remark}

Finally , we  establish that there can be no closed loops.

\begin{theorem}\label{thm:no-loop}
Let $(\gamma_x,I_x)$ be a maximal solution of \eqref{eq:ODE} with $x\in \O \setminus \mathcal Z$. Then $\Gamma_x$ cannot contain a closed loop.
\end{theorem}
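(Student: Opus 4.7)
The plan is to argue by contradiction. Suppose $\Gamma_x$ contains a non-degenerate closed loop: there exist $\alpha_x<t_0<t_1<\beta_x$ with $\gamma_x(t_0)=\gamma_x(t_1)$ and $\gamma_x$ non-constant on $[t_0,t_1]$. Replacing $[t_0,t_1]$ by a minimal such sub-interval, one may assume $\gamma_x|_{[t_0,t_1)}$ is injective, so that $K:=\gamma_x([t_0,t_1])\subset\O\setminus\mathcal Z$ is a simple closed Jordan curve bounding, by Jordan's theorem, a bounded open region $U$.

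First I would dispose of the case $K\subset\Opl$: inside the open set $\Opl$, Theorem~\ref{thm:struct-sigma-u} shows that $\sigma$ is locally Lipschitz and constant along each characteristic line $L_{\gamma_x(t_0)}$; Cauchy--Lipschitz uniqueness then forces $\gamma_x$ to coincide on $[t_0,t_1]$ with an affine parametrisation of $L_{\gamma_x(t_0)}\cap\Opl$, which cannot be a closed Jordan curve.

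The heart of the proof is then the following chain of equalities, which exploits the divergence-free character of $\sigma$ through Green's theorem. Since $\dot\gamma_x=\sigma^\perp\circ\gamma_x$ and $\sigma\in\C^0(\O\setminus\mathcal Z;\R^2)\cap H^1_{\rm loc}(\O;\R^2)$ with $\Div\sigma=0$ in $\O$,
\begin{equation*}
\int_{t_0}^{t_1}|\sigma(\gamma_x(t))|^{2}\,dt
=\int_{t_0}^{t_1}\dot\gamma_x(t)\cdot\sigma^\perp(\gamma_x(t))\,dt
=\oint_{K}\sigma^\perp\cdot d\vec s
=\iint_{U}\Div\sigma\,dA=0,
\end{equation*}
provided that $U\subset\O$, which allows Green's theorem to be applied to the $H^1$ field $\sigma$ on the Lipschitz (in fact $\C^1$) domain $U$. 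Continuity of $\sigma$ on $\O\setminus\mathcal Z$ then upgrades the a.e.\ vanishing to $\sigma(\gamma_x(t))=0$ for every $t\in[t_0,t_1]$, and hence $\dot\gamma_x\equiv 0$ on this interval, so $K$ collapses to a point --- contradicting the non-degeneracy of the loop.

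The one delicate point is to verify $U\subset\O$, i.e.\ that $K$ does not encircle a bounded connected component (``hole'') of $\R^2\setminus\O$. In the simply connected case this is automatic; in general, one would instead apply Green's theorem to $U\setminus\overline H$ for each such hole $H\subset U$, which introduces an extra boundary flux $\int_{\partial H}\sigma\cdot\nu\,d\h$. Ruling out topologically nontrivial loops of this kind --- by combining the safe-load condition~\eqref{eq.sl} on $\partial_N\O$ with a topological analysis of the characteristic flow near $\partial\O$, in the vein of Proposition~\ref{prop:trav-loc}, Lemma~\ref{lem:confinement} and Remark~\ref{rem:charac_in_Sigma} --- is, in my view, the main obstacle in the general case.
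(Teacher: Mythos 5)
Your proof is correct and takes a genuinely different, more unified route than the paper's. The paper splits into two cases: if the loop stays in $\Oel\cup\Sigma$ it shows $u$ is constant on $\partial U$ (constancy along characteristics in $\Oel$ plus $\partial_\tau u=0$ on the flat pieces of $\partial^c\Opl$) and invokes the maximum principle to force $\sigma=\nabla u\equiv 0$, contradicting $|\sigma|=1$ on $\Sigma$; if the loop meets $\Opl$ it argues that $\sigma\cdot\nu$ cannot change sign on $\partial U$ (using $\dot\gamma=\sigma^\perp(\gamma)$, so $\sigma\perp\partial U$, and ruling out zeros of $\sigma$ on the loop by Cauchy--Lipschitz) and then contradicts $\int_{\partial U}\sigma\cdot\nu\,d\HH^1=\int_U\Div\sigma\,dx=0$. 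Your single identity $\oint_K\sigma^\perp\cdot d\vec s=\int_{t_0}^{t_1}|\sigma(\gamma(t))|^2\,dt$ is exactly the observation that $\sigma\cdot\nu\,d\HH^1=\pm|\sigma\circ\gamma|^2\,dt$ along a characteristic, pushed to its logical conclusion: one application of Green's theorem absorbs both of the paper's cases and dispenses with the maximum-principle argument entirely. Two small points: (i) your parenthetical claim that $U$ is a $\C^1$ domain needs $\dot\gamma\neq 0$ on the loop; this holds because zeros of $\sigma$ lie in $\Oel$, where they are equilibria of a locally Lipschitz field, so a non-constant solution cannot reach one (this is precisely the paper's Cauchy--Lipschitz step), while $|\sigma|=1$ on $(\overline\O_{pl}\cap\O)\setminus\mathcal Z$ -- alternatively, work with $U$ as a set of finite perimeter; (ii) the issue $U\subset\O$ that you flag is real but is equally unaddressed in the paper's own proof, whose Case 1 asserts $U\subset\Oel$ and whose Case 2 integrates $\Div\sigma$ over $U$; both arguments implicitly rely on $\O$ being simply connected (as it is assumed to be in Theorem \ref{thm.uniq}), so your proposal is not at a disadvantage there.
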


\begin{proof}
Assume by contradiction that there exist $\alpha\le t_0<s_0\le \beta$ such that $x_0:=\gamma(t_0)=\gamma(s_0)$ and define $\Gamma'=\gamma([t_0,s_0])$. Then, $\gamma:[t_0,s_0] \to \R^2$ is a closed Jordan curve, and, by Jordan's Theorem, we can consider the bounded connected component $U$ of $\R^2\setminus \gamma([t_0,s_0])$ such that $\partial U= \gamma([t_0,s_0])$.

{We distinguish two cases which will both lead to a contradiction.}

\medskip

{\sf Case 1:} If $\Gamma'$ remains inside $\Oel \cup \Sigma$, then $U \subset \Oel$ hence $u$ is harmonic in $U$. By Corollary \ref{cor:char-bound} {and Lemma \ref{lem:confinement}}, there are at most three connected components of $\Gamma' \cap \Sigma$. Moreover, by Proposition \ref{prop:trav-loc}, such connected components must be either single points or line segments with non empty interior. On the one hand, by Remark \ref{rem:u-constant}, $u$ is constant on the three connected components of $\Gamma' \setminus \Sigma=\Gamma' \cap \Oel$. On the other hand, if $S$ is a connected component of $\Gamma' \cap \Sigma$ with {relative} non-empty interior, { Proposition \ref{prop:trav-loc} implies that} $S \subset \partial^c\Opl$. According to Lemma \ref{lem:segment}, $\sigma|_{S}=\nabla u|_{S}=\e\nu$ for some $\e=\pm 1$, where $\nu$ is the unit normal to $S$ oriented from $\Oel$ to $\Opl$. As a consequence $\partial_\tau u=0$ on $S$, hence $u$ is constant on $S$. Using that $u$ is continuous in $\Oel \cup \Sigma$, we infer that $u$ is constant on $\partial U$. By the maximum principle on $U$, $u$ is constant on the connected component $\omega$ of $\Oel$ containing $U$. But then, $\sigma=\nabla u \equiv0$ on $\omega$ which contradicts the fact that  $|\sigma|=1$ on $\Sigma \setminus \mathcal Z$ by Remark \ref{rem:cont-|sigma|-Sigma}.

\medskip

{\sf Case 2:} If  $\Gamma'$ intersects $\Opl$, denoting by $\nu$ the outer normal to $U$, we know that $\sigma$ is always parallel to $\nu$ on $\partial U\cap \Opl$ and thus, by continuity of $\sigma$ on $\overline\O_{pl}\setminus\mathcal Z$, $\sigma\cdot\nu$ can only change sign in $\Oel$.  Take $y_0 \in \partial U \cap \Oel$ such that $\sigma(y_0)\cdot \nu(y_0)=0$. By the definition \eqref{eq:ODE} of the characteristics, $\sigma$ is orthogonal to $\Gamma'=\partial U$ and, consequently, $\sigma(y_0)=0$. Let $r_0 \in [\alpha,\beta]$ be such that $y_0=\gamma(r_0)$. Using  \eqref{eq:ODE} again yields $\dot \gamma(r_0)=0$. Since $y_0 \in \Oel$ and $\sigma \in \C^\infty(\Oel;\R^2)$, the Cauchy-Lipschitz Theorem ensures the existence of $\delta>0$ such that the ODE
$$
\begin{cases}
X(r) \in \Oel & \text{ for all } r \in [-\delta,\delta],\\
\dot X(r)=\sigma^\perp(X(r)) & \text{ for all } r \in [-\delta,\delta],\\
X(0)=y_0&
\end{cases}$$
has a unique local solution which must satisfy $X(r)=\gamma(r_0+r)=y_0$ for all $ r \in [-\delta,\delta]$. Thus the characteristic curve $\Gamma'$ (hence $\Gamma$) stops at $y_0 \in \Oel$, which contradicts the fact that $\Gamma'$ is a closed loop.   We can thus assume without loss of generality that   $\sigma\cdot\nu>0$ on $\partial U$. Finally, according to the divergence theorem, 
$$0=\int_U \Div \sigma\, dx = \int_{\partial U} \sigma\cdot\nu\, d\HH^1,$$
which is  impossible. 
\end{proof}

\medskip

\section*{Acknowledgements}

\noindent JFB's work was supported by a public grant from the Fondation Math\'ematique Jacques Hadamard. The authors thank Jean-Jacques Marigo for pointing out  the MacClintock construction of Example \ref{ex.Mc} as a possibly tractable computation, and Vincent Millot for directing them to reference \cite{brezis.nirenberg}.
They also greatfully acknowledge Beno\^it Merlet for providing a proof of the Lemma  in the Appendix which is thus baptized ``Merlet's Lemma" in the text.
 
 \section*{Appendix}

\setcounter{theorem}{0}
\setcounter{equation}{0}
\setcounter{section}{1}

\renewcommand{\thesection}{\Alph{section}}
\newcommand\ee{{\vec e}}

In our context, revisiting the proof of Theorem 3.10 (see \cite[Theorem 5.6]{BF}), it can be seen that characteristic lines in $\Opl$ originating from an $\HH^1$-negligible set of points in (the interior of) $\Opl$ form an $\LL^2$-negligible set (see also the proof of Proposition \ref{prop:level-set}).  Specifically, if $Z \subset  \Opl$ is an $\HH^1$-negligible set, then $\bigcup_{z \in Z} (L_z \cap \Opl)$ is $\LL^2$-negligible. Unfortunately, the proof of this property does not easily extend to the case where the exceptional set $Z$ leaves on the boundary of $\Opl$ because $\sigma$ is only locally Lipschitz in $\Opl$. {For example, the apex $\hat z$ of a boundary fan is a set of zero $\HH^1$ measure (since it is a singleton), but the union of the characteristics originated from $\hat z$ is precisely the boundary fan $\mathbf F_{\hat z}$, hence a set of positive $\LL^2$ measure. However, the $H^1_{\rm loc}(\O;\R^2)$-regularity of $\sigma$ will permit us to  consider exceptional sets $Z \subset \O \cap \partial \Opl$, thanks to the following lemma (see the proof of  Lemma \ref{lem:cont_u} for the details).

\begin{theorem*}[Merlet's Lemma] 
 Consider a convex open set $A\subset\R^2$ and $m \in H^1(A;\R^2)\cap \C^0(\overline A;\R^2)$ with
 $$
 \begin{cases}
 {\rm div\, } m=0\\
 |m|=1
 \end{cases}
 \quad\text{ in }A.$$ 
Define $L_x=x+\R m^\perp(x)$ to be the characteristic line with director $m^\perp(x)$ going through $x \in A$. Consider two points $x_0$ and $x_1$ in $\partial A$ 
 such that $L_{x_0}$ (resp. $L_{x_1}$) is not in the tangent cone to $\partial A$ at $x_0$ (resp. $x_1$), and let $\C$ the open arc joining $x_0$ and $x_1$ in $\partial A$.   
If $Z \subset \C$ is such that $\HH^1(Z)=0$, then $\LL^2\left(\bigcup_{z\in Z}(L_z\cap A)\right)=0$.
 \end{theorem*}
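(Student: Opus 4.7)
The strategy is to parametrize $\bigcup_{z\in Z}(L_z\cap A)$ by a characteristic flow map, slice along the levels of an associated stream function by the coarea formula, and then exploit the interior Lipschitz regularity of $m$ on each slice. Let $\gamma:[a,b]\to\overline{\C}$ be a bi-Lipschitz parametrization of $\overline{\C}$ and set $E:=\gamma^{-1}(Z)$, so that $\LL^1(E)=0$. With $T:=\mathrm{diam}(A)$, the flow-type map
$$\Phi(s,\tau):=\gamma(s)+\tau\,m^\perp(\gamma(s)),\qquad (s,\tau)\in[a,b]\times[-T,T],$$
is continuous since $m\in\C^0(\overline A)$, it is injective on $\Phi^{-1}(A)$ -- otherwise $m$ would be multi-valued at a crossing of two distinct characteristic lines inside $A$ -- and one has $\Phi(E\times[-T,T])\cap A=\bigcup_{z\in Z}(L_z\cap A)$. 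The goal thus reduces to showing $\LL^2(\Phi(E\times[-T,T])\cap A)=0$.

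Since $A$ is simply connected (being convex) and $\mathrm{div}\,m=0$, the curl-free field $m^\perp$ admits a potential $\phi\in\C^1(\overline A)\cap H^2_{\rm loc}(A)$ with $\nabla\phi=m^\perp$, whence the eikonal identity $|\nabla\phi|=1$ on $\overline A$ and the cone formula $\phi\circ\Phi(s,\tau)=\phi(\gamma(s))+\tau$. The characteristic foliation of $A$ coincides with the gradient-flow foliation of the $1$-Lipschitz eikonal $\phi$; for a.e. $\lambda$, the level set $\Gamma_\lambda:=\{\phi=\lambda\}\cap A$ is a rectifiable curve transverse to every characteristic and meeting each at most once. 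Applying the coarea formula to $\phi$ yields
$$\LL^2\bigl(\Phi(E\times[-T,T])\cap A\bigr)=\int_\R \HH^1\bigl(P_\lambda^{-1}(Z)\bigr)\,d\lambda,$$
where $P_\lambda:\Gamma_\lambda\to\overline{\C}$ is the one-to-one retraction sending $x$ to $L_x\cap\overline{\C}$.

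The task thus reduces to showing $\HH^1(P_\lambda^{-1}(Z))=0$ for a.e. $\lambda$, which is the main obstacle. The transversality hypothesis at $x_0$ and $x_1$ combined with the continuity of $m$ on $\overline{\C}$ propagates into a uniform lower bound on the angle between characteristics and $\overline{\C}$. On any compact $K\Subset A$, the vector field $m$ is locally Lipschitz -- a regularity feature of $2$D $H^1$ divergence-free unit vector fields, consistent with \cite[Theorem 5.1]{BF} in the context of the present paper -- so the restriction $P_\lambda|_{K\cap\Gamma_\lambda}$ is Lipschitz, with constant depending on $K$ and on the transversality angle. Hence $\HH^1(P_\lambda^{-1}(Z)\cap K)=0$, and an exhaustion $K_n\nearrow A$ gives $\HH^1(P_\lambda^{-1}(Z))=0$, closing the argument via the coarea identity above. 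The delicate point is precisely this slice Lipschitz bound: the naive inverse parametrization $z\mapsto z+(\lambda-\phi(z))\,m^\perp(z)$ of $\Gamma_\lambda$ by $\overline{\C}$ is only continuous, because the trace of $m$ on $\overline{\C}$ is merely $H^{1/2}\cap\C^0$, so the bound must be obtained from the interior regularity of $m$ rather than from a direct boundary computation.
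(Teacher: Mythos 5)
Your reduction (flow map, stream function $\phi$ with $\nabla\phi=m^\perp$, coarea slicing along the levels of $\phi$) is a legitimate reformulation, but the proof has a genuine gap exactly at the step you yourself flag as ``delicate'', and the gap is not repairable along the lines you indicate. First, a logical issue: to deduce $\HH^1(P_\lambda^{-1}(Z))=0$ from $\HH^1(Z)=0$ you need the \emph{inverse} map $\overline{\C}\to\Gamma_\lambda$ to be Lipschitz (Lipschitz maps send null sets to null sets; preimages of null sets under a Lipschitz $P_\lambda$ can perfectly well have positive measure). Second, and more seriously, no such Lipschitz bound holds pointwise for a merely continuous divergence-free unit field: a boundary fan sends a single boundary point (an $\HH^1$-null set) onto a full arc of $\Gamma_\lambda$, so the map $z\mapsto P_\lambda^{-1}(z)$ is genuinely non-Lipschitz in general. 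The only thing that excludes this behaviour here is the global bound $\int_A|\nabla m|^2\,dx<\infty$, and that bound cannot be localized into a slice-wise Lipschitz constant: the interior Lipschitz constant of $m$ on $K_n\Subset A$ blows up as $K_n\nearrow A$ (the paper's Example \ref{ex.tr} shows $H^1_{\rm loc}$ is optimal), so the exhaustion argument does not close. What is needed is an \emph{integrated} use of the energy, and your outline contains no mechanism for it.

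For comparison, the paper's proof never slices: it covers $Z$ by balls $B_i$ with $\sum_i{\rm diam}(B_i)\le\delta$, bounds the area of the wedge $V_i$ swept by the characteristics issued from $B_i\cap\C$ by $C({\rm diam}(B_i)+\sin\theta_i)$, where $\theta_i$ is the angle between the two extreme characteristics, and then establishes the key lower bound
\[
\int_{V_i}|\nabla m|^2\,dx\;\ge\;C_0\,\sin\theta_i\,\ln\Bigl(1+\frac{\alpha\sin\theta_i}{{\rm diam}(B_i)}\Bigr),
\]
obtained by integrating $|\partial_1 m|^2$ across a trapezoid inscribed in $V_i$. A dyadic decomposition according to the ratio $\sin\theta_i/{\rm diam}(B_i)$ then shows $\sum_i\sin\theta_i$ is small: the terms with small ratio are controlled by $\delta$, and the terms with large ratio are controlled by $\|\nabla m\|_{L^2(A)}^2$ divided by a large logarithm, using that the $V_i$ are pairwise disjoint. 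This logarithmic gain is the entire content of the lemma and has no counterpart in your argument; without it, the statement would be false for $m$ merely continuous.
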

 
\begin{proof}
In the following proof,  angles  are non-oriented.

By a result first derived in \cite[Proposition 3.2]{JOP} (see also \cite[Proposition 5.4]{BF} in our specific context) together with the continuity of $m$ on $\overline A$, $m$ remains constant along $L_x\cap\overline A$ for all $x \in \overline A$. In particular,  $L_x\cap L_y \cap \overline A=\emptyset$ for $x$, $y\in \overline A$ with $x\ne y$, otherwise $m$ would not belong to $H^1(A;\R^2)$ (see {\it e.g.} \cite[Theorem 6.2]{BF} for the case $x=y \in\pa A$).

Denote by $g:[0,1]\to {\overline \C}$ a one-to-one Lipschitz parametrization of $\overline \C$ with $g(0)=x_0$ and $g(1)=x_1$. {Let $H_0$ (resp. $H_1$) be} the open half-plane such that $\partial H_0=L_{x_0}$ (resp. $\partial H_1=L_{x_1}$) {and} containing $L_{x_1} \cap A$ (resp. $L_{x_0} \cap A$). Then 
$$U :=  H_0 \cap H_1 \cap A$$ 
is a (nonempty) convex open subset of $A$ which has the property 
$$(L_{x_0} \cap \ol A) \cup (L_{x_1} \cap\ol A ) \cup \C \subset \partial U.$$ 

Let $\C'=\partial U \setminus [(L_{x_0} \cap \ol A) \cup (L_{x_1} \cap\ol A ) \cup \C]$.  
We first notice that the length of any line segment joining a point $x \in \C$ to a point $y \in \C'$ must stay between two positive constants, {\it i.e.,} 
\begin{equation}\label{a-1}
0<\alpha \le \h([x,y])=|x-y|\le {\rm diam}(A)\quad \text{ for all }(x,y)\in \C \times \C',
\end{equation}
for some $\alpha>0$ only depending on $A$.

Let us denote by $y_0$ (resp. $y_1$) the other intersection point of $L_{x_0}$ (resp. $L_{x_1}$) with $\partial A$. 
 Because of the convex character of $U$ and since $L_{x_0}$ (resp. $L_{x_1}$) is not in the tangent cone to $\pa A$ at $x_0$ (resp. $x_1$), the angle
 between any chord $[x,x']$ joining two points $x$, $x' \in \C$ with a segment $[x,y]$ joining $x \in \C$ to $y \in \C'$ is such that 
{ \begin{equation}\label{a12}
\eta_0 \leq \angle([x,x'],[x,y]) \le \pi-\eta_0,
 \end{equation}
 for some angle $0<\eta_0 \leq \min\{\angle{([x_0,x_1],[x_0,y_1])},\angle{([x_0,x_1],[y_0,x_1])}\}<\pi$ only depending on $A$.}
 
 \medskip
 
{\sf Step 1: Bound from above on the area of a portion $V \subset U$ bounded by two characteristic lines.} 
Consider a small arc $g([s,s'])$ of $\C$ joining $a:=g(s)$ to $b=g(s')$. We denote by $\theta$ the angle between the lines $L_a$ and $L_b$ and assume that
\begin{equation}\label{a15}
\theta \le \frac{\eta_0}{2}.
\end{equation}
This is possible if $s$ and $s'$ are close enough because $m$ is continuous.  Let $c$ (resp. $d$) be the intersection {point} of $L_{a}$ (resp. $L_{b}$) with $\partial A$ distinct from $a$ (resp. $b$). Note that $c\ne d$, otherwise $L_{a}$ would be intersecting $L_{b}$ at $c=d$ which would contradict the fact  that $v\in H^1(A;\R^2)$.  Whenever $\theta\ne 0$ we consider the intersection point $z_\theta$ of the lines $L_{a}$ and $L_{b}$; it must lie outside $\ol A$, lest, once more, $m$ not be $H^1(A;\R^2)$.

Let $V$ be the convex subdomain of $U$ bounded by $g([s,s'])$, $L_{a} \cap A$, $L_{b} \cap A$. Consider the open half-plane $H$ passing through the points $x_0$ and $x_1$, and such that $\C \subset H$. {If $\theta\ne 0$ and $z_\theta \in H$, by convexity of $A$, the area of $V$ can be estimated as 
\begin{equation}\label{a5}
\LL^2(V)\le C (|a-b|+\sin\theta),
\end{equation}
while, if $\theta=0$ or $z_\theta \not\in H$, that area is immediately seen to be controlled by
\begin{equation}\label{a50}\LL^2(V)\le C |a-b|,\end{equation}
for some constant $C>0$ only depending on $A$. }

Let us zero in on the case where $\theta\ne 0$ and $z_\theta \in H$ since, otherwise, estimate \eqref{a50} will suffice for our purpose as seen later. 

 \medskip
 
 {\sf Step 2: Bound of  $\|\nabla m\|_{L^2(V)}$ from below when $\theta\ne 0$ and $z_\theta\in H$.}
Assume, without loss of generality, that $|a-z|\le |b-z|$ and define $T$ to be the trapeze with boundary $[a,\bar c]\cup[\bar c,\bar d]\cup[b,\bar d]\cup [a,b]$ where $\bar c \in L_a$, $\bar d \in L_b$, $]\bar c,\bar d[ \; \subset A$ is parallel to $]a,b[$ and either $\bar c=c$, or $\bar d=d$. The convexity of $V$ ensures that $T \subset V$.  We call $\eta$ the angle between $[a,b]$ and $L_a$. We further denote by $h$ the distance between the line segments $[a,b]$ and $[\bar c, \bar d]$ and note that, in the notation of \eqref{a-1},
\begin{equation}\label{a21}
h\ge \alpha \min\{\sin \eta,\sin(\eta-\theta)\} \geq \alpha \sin\eta \sin(\eta-\theta);
\end{equation}
see figure \ref{fig:a}.

 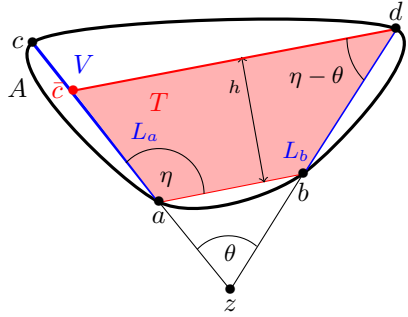
\begin{figure}[hbtp]
\scalebox{1}{\begin{tikzpicture}

\draw[style=very thick] plot[smooth cycle] coordinates {(-1.18,1.19) (3.65,1.37)(2.42,-0.55) (.5,-.93)};

\draw (-1.38,0.59)  node{$A$}; 

\draw[color=blue][style=very thick] (2.42,-0.55) -- (3.65,1.37) ;

\draw[color=blue][style=very thick] (-1.18,1.19) -- (.5,-.93) ; 

\draw (-.5,.9) node[color=blue]{$V$}; 

\draw (.5,-.93) -- (1.45,-2.1) ; \draw (1.45,-2.1) -- (2.42,-0.55) ;

\draw[color=red][style=thick]  (2.42,-0.55) --  (.5,-.93);
\draw (1.8,-1.52) arc (49:137:.57); \draw (1.45,-1.6) node{{\small $\theta$}};

\fill[color=red!30]  (2.42,-0.55) --  (.5,-.93)--(-.64,.55)--(3.65,1.37)-- (2.42,-0.55) ;
\draw (.5,.4) node[color=red]{$T$};
\draw(0.,0.) node[right][color=blue]{\small $L_{a}$}; \draw(2.02,-.26) node[right][color=blue]{\small $L_{b}$};

 \draw[<->] (1.9, -.67)-- (1.6, 1.0); \draw (1.3,.6) node[right]{{\scriptsize $h$}};

 \draw (1.1,-.82) arc (0:137:.6);\draw (0.6,-0.4) node[below]{\small{$\eta$}};

\draw  (-1.18,1.19) node{{\small $\bullet$}}; \draw (-1.18,1.19) node[left]{$c$};

\draw  (.5,-.93) node{{\small $\bullet$}};  \draw (.5,-.93) node[below]{$a$};

\draw (1.45,-2.1) node{{\small $\bullet$}};  \draw (1.45,-2.1) node[below]{$z$};

\draw  (2.42,-0.55) node{{\small $\bullet$}}; \draw (2.42,-0.55) node[below]{$b$};

\draw[color=red][style=thick]  (-.64,.55) -- (3.65,1.37) ;

\draw  (3.65,1.37) node{{\small $\bullet$}}; \draw (3.65,1.37) node[above]{$d$};

\draw  (-.64,.55) node[color=red]{{\small $\bullet$}};  \draw (-.65,.55) node[left][color=red]{$\bar c$};

\draw (3,1.25) arc (180:224:0.8);  \draw (2.6,1) node[below]{\small{$\eta-\theta$}};

\end{tikzpicture}}
\caption{\small  The sets $A$, $V$ and $T$.}
\label{fig:a}
\end{figure}

 Let us consider an orthonormal basis $\{e_1,e_2\}$ of $\R^2$ where the origin is set at the point $a$, the first vector $e_1=\frac{b-a}{|b-a|}$ is oriented in the direction of the vector $b-a$, and $e_2$ (which is orthogonal to $e_1$) is such that $T \subset \{x \in \R^2 : x \cdot e_2 \geq 0\}$. Denoting by $(x_1,x_2)$ the coordinates of $x$ in that basis, purely geometric considerations lead to
$$T= \Bigg\{(x_1,x_2) \in \R^2 : \; 0 \leq x_2 \leq h, \; a(x_2) \leq x_1 \leq b(x_2)\Bigg\},$$
where, for $x_2 \in 0,h]$, $(a(x_2),x_2) \in [a,\bar c]$, $(b(x_2),x_2) \in [b,\bar d]$ and the length of the section of $T$ at height $x_2 \in [0,h]$ is given by
$$b(x_2)-a(x_2)=|a-b|+x_2\left(\tan\left(\frac{\pi}{2}+\theta-\eta\right)-\tan\left(\frac{\pi}{2}-\eta\right)\right).$$
 Then, Fubini's theorem together with Jensen's inequality yields
\begin{eqnarray}
\int_V |\nabla m|^2\, dx \geq \int_T |\nabla m|^2\, dx & \ge & \int_0^h \left[\int_{a(x_2)}^{b(x_2)}|\pa_1 m|^2\ dx_1\right]dx_2\nonumber\\
& \ge & \int_0^h \frac{|m(b(x_2),x_2)-m(a(x_2),x_2)|^2}{b(x_2)-a(x_2)} dx_2.\label{eq:a?}
\end{eqnarray}
 Using that $m$ is constant along $[a,\bar c] \subset L_a$ and $[b,\bar d] \subset L_b$ together with the fact that $\theta=\angle(L_a,L_b)$, we infer that
 $|m(b(x_2),x_2)-m(a(x_2),x_2)|^2=2(1-\cos\theta)=4\sin^2(\frac\theta2)\geq \sin^2\theta$.  {Hence,  in view of \eqref{a21} together with the trigonometric formula
 $$\tan\left(\frac{\pi}{2}+\theta-\eta\right)-\tan\left(\frac{\pi}{2}-\eta\right)=\frac{\sin\theta}{\sin\eta \sin(\eta-\theta)},$$
 \eqref{eq:a?} becomes
 \begin{eqnarray}\label{a22}
\int_V |\nabla m|^2\, dx & \ge & \int_0^{ \alpha \sin\eta \sin(\eta-\theta)} \frac{\sin^2\theta}{|a-b|+x_2\left(\tan\left(\frac{\pi}{2}+\theta-\eta\right)-\tan\left(\frac{\pi}{2}-\eta\right)\right)} dx_2\nonumber\\
& = & \sin\theta\sin\eta\sin(\eta-\theta)\ln\left(1+\frac{\alpha \sin\theta}{|a-b|}\right).
\end{eqnarray}
}
The angle relations \eqref{a12}, \eqref{a15} imply that
 $$
\sin\eta\sin(\eta-\theta)\ge \sin\eta_0\sin\left(\frac{\eta_0}{2}\right):=C_0,$$
where $C_0>0$ is a constant only depending on $A$,  and \eqref{a22} finally becomes, 
\begin{equation}\label{a27}
\int_V |\nabla m|^2\, dx\ge C_0 \sin\theta \ln\left(1+\frac{\alpha \sin\theta}{|a-b|}\right).  
\end{equation}

 \medskip

{\sf Step 3: Conclusion.} Since $\HH^1(Z)=0$, by definition of the Hausdorff measure, for all $\delta>0$, there exist an at most countable set $I \subset \N$ and $\{B_i\}_{i \in I}$ such that $Z  \subset \bigcup_{i \in I} B_i$ and
\begin{equation}\label{eq:haus}
\sum_{i \in I} {\rm diam}(B_i) \leq \delta.
\end{equation}
Moreover, there is no loss of generality in assuming that the sets $B_i=B_{\varrho_i}(z_i)$ are discs centered at a point $z_i \in Z$ with radius {$\varrho_i >0$. We can further suppose that $\overline B_i \setminus \overline \C=\emptyset$ and $\theta_i \leq \eta_0/2$  (see \eqref{a12}--\eqref{a15}) at the expense of decreasing  $\delta$ and thanks to the continuity of $m$.}

For each $i \in I$, we set
$\{a_i,b_i\}=\partial B_i \cap \C,$
and define the associated $V_i$, $\theta_i$, $H_i$ and $z_{\theta_i}$.  {We partition $I$ into 
$$I^+:=\{i\in I: \; \theta_i\ne 0\mbox{ and }z_{\theta_i}\in H_i\}, \quad I^-:=\{i\in I:\; \theta_i=0\mbox{ or }z_{\theta_i}\notin H_i\}.$$
Using \eqref{a50},
\begin{equation}\label{a4bis}
\LL^2(V_i)\le C{\rm diam}(B_i), \;i \in I^-,
\end{equation}
while \eqref{a5} ensures that
\begin{equation}\label{a5bis}
\LL^2(V_i)\le  C ({\rm diam}(B_i)+\sin\theta_i),\; i \in I^+.
\end{equation}
Here $C>0$ is a constant only depending on $A$. In view of \eqref{a27},
\begin{equation}\label{a27bis}
\int_{V_i} |\nabla m|^2\, dx \ge C_0\sin\theta_i     \ln\left(1+\frac{\alpha\sin\theta_i}{{\rm diam}(B_i)}\right),\; i \in I^+.
\end{equation}
}

Define the disjoint sets of indices 
$$I^+_0:=\left\{i\in I^+: \;  \frac{\sin\theta_i}{{\rm diam}(B_i)}< 1\right\}, \quad \; I^+_j:=\left\{i\in I^+: \; 2^{j-1}\leq \frac{\sin\theta_i}{{\rm diam}(B_i)}< 2^j\right\} \text{ for } j\ge 1,$$
{so that}
$$\sum_{i\in I^+} \sin\theta_i= \sum_{j=0}^{j_0}\sum_{i\in I^+_j}\sin\theta_i+\sum_{j>j_0}\sum_{i\in I^+_j}\sin\theta_i.$$
Now, 
\begin{equation}\label{a100}
{\sum_{j=0}^{j_0}\sum_{i\in I^+_j}\sin\theta_i} \le \sum_{j=0}^{j_0}\sum_{i\in I^+_j}2^j{\rm diam}(B_i)\le 2^{j_0} (j_0+1)\sum_{{i \in I}}{\rm diam}(B_i),
\end{equation}
while, appealing to \eqref{a27bis}, 
\begin{eqnarray}\label{a101}
{\sum_{j>j_0}\sum_{i\in I^+_j}\sin\theta_i} & \le & \sum_{j>j_0} \frac1{\ln(1+\alpha 2^{j_0})}\sum_{i\in I^+_j}\sin\theta_i\ln\left(1+\alpha \frac{\sin\theta_i}{{\rm diam}(B_i)}\right)\nonumber\\
& \le & \frac{1}{C_0\ln(1+\alpha 2^{j_0})} \sum_{{i\in I^+}}\int_{V_i}|\nabla m|^2\ dx\nonumber\\
& \le &\frac{1}{C_0\ln(1+\alpha 2^{j_0})} \int_A|\nabla m|^2\ dx,
\end{eqnarray}
where we used that, since characteristic lines cannot intersect inside $A$, the sets $\{V_i\}_{{i \in I^+}}$ are pairwise disjoint. {Gathering \eqref{eq:haus}, \eqref{a4bis}, \eqref{a5bis}, \eqref{a100} and \eqref{a101}, we  obtain that 
\begin{eqnarray}\label{a4}
\sum_{i\in I} \LL^2(V_i) & = & \sum_{i\in I^-} \LL^2(V_i)+\sum_{i\in I^+} \LL^2(V_i)\nonumber\\
& \le & C\sum_{i\in I}{\rm diam}(B_i)+ C\sum_{i\in I^+}\sin\theta_i \nonumber\\
& \le & C\left(\delta + 2^{j_0} (j_0+1)\delta+ \frac{C_0}{\ln(1+\alpha 2^{j_0})}\int_A|\nabla m|^2 \, dx\right).
\end{eqnarray}

Given} $\e>0$, choosing first $j_0 \in \N$ large enough and then $\delta>0$ small enough, we conclude that 
$$\sum_{i \in I} \LL^2(V_i)\le\e.$$ 
Since $\bigcup_{z\in Z}(L_z\cap A) \subset \bigcup_{i\in I} V_i$, this proves that the set $\bigcup_{z\in Z}(L_z\cap A)$ is $\LL^2$-negligible.
\end{proof}

\end{document}